\newcommand{\R}{\mathbb{R}}
\newcommand{\C}{\mathbb{C}}
\newcommand{\Z}{\mathbb{Z}}
\newtheorem{corollary}{Corollary}[section]
\newtheorem{lemma}{Lemma}[section]
\newtheorem{proposition}{Proposition}[section]
\newtheorem{theorem}{Theorem}[section]
\numberwithin{equation}{section}
\newcommand{\eps}{\varepsilon}
\newcommand{\qtq}[1]{\quad\text{#1}\quad}
\begin{document}

\title[Scattering for NLS with inhomogeneous nonlinearities]{Scattering for the $1d$ NLS with \\ inhomogeneous nonlinearities.}

\author[L. Baker]{Luke Baker}
\email{lukebake@uoregon.edu}
\address{Department of Mathematics, University of Oregon, Eugene, OR, USA}

\author[J. Murphy]{Jason Murphy}
\email{jamu@uoregon.edu}
\address{Department of Mathematics, University of Oregon, Eugene, OR, USA}

\maketitle

\begin{abstract} We prove large-data scattering in $H^1$ for inhomogeneous nonlinear Schr\"odinger equations in one space dimension for powers $p>2$. We assume the inhomogeneity is nonnegative and repulsive; we additionally require decay at infinity in the case $2<p\leq 4$.  We use the method of concentration-compactness and contradiction, utilizing a Morawetz estimate in the style of Nakanishi in order to preclude the existence of compact solutions. 
\end{abstract}


\section{Introduction}

Our interest in this work is the large-data scattering theory for inhomogeneous nonlinear Schr\"odinger equations of the form
\begin{equation}\label{nls}
(i\partial_t + \Delta) u = a(x) |u|^p u.
\end{equation}
Here $u:\R_t\times\R_x^d\to\C$ and $a:\R^d\to\C$. The case $a\equiv 1$ corresponds to the standard defocusing power-type NLS, an extensively-studied model that we will review below. Throughout this paper, we will assume that the \emph{inhomogeneity} $a$ is nonnegative, so that the nonlinearity remains defocusing. The other essential assumption we will take is that of \emph{repulsivity}, which refers to the sign condition $x\cdot\nabla a\leq 0$.  These two conditions allow for the development of Morawetz estimates that play a central role in the large-data scattering theory.

For reasons to be explained shortly, our main result addresses only the one-dimensional case.  Our main result is the following: 

\begin{theorem}\label{T} Let $a:\R\to\R$ satisfy $a\geq 0$ and $x\cdot\nabla a\leq 0$.  Let $p>2$, and suppose 
\[
\begin{cases}
a,\nabla a \in L^\infty & \text{if }p>4 \\
a,\nabla a \in L^1\cap L^\infty & \text{if }2< p\leq 4.
\end{cases}
\]
Suppose further that $|\nabla a(x)|\to 0$ as $|x|\to\infty$.

For any $u_0\in H^1(\R)$, there exists a unique global solution $u:\R\times\R\to\C$ to \eqref{nls} with $u|_{t=0}=u_0$.  Moreover, $u$ scatters in $H^1$ as $t\to\pm\infty$; that is, there exist $u_\pm\in H^1(\R)$ such that
\[
\lim_{t\to\pm\infty} \|u(t)-e^{it\Delta}u_\pm\|_{H^1} = 0,
\]
where $e^{it\Delta}$ is the free Schr\"odinger group.  
\end{theorem}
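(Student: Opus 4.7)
The proof will proceed via the concentration--compactness / rigidity scheme, adapted to the inhomogeneous setting. Local well-posedness in $H^1$ is a routine consequence of the Strichartz estimates for $e^{it\Delta}$: since $a\in L^\infty$ and $p>2$, the map $u\mapsto a(x)|u|^pu$ is locally Lipschitz on an appropriate Strichartz space. Conservation of mass and of the energy
\[
E(u)=\tfrac12\|\partial_x u\|_{L^2}^2+\tfrac{1}{p+2}\int_\R a(x)|u(x)|^{p+2}\,dx,
\]
which is coercive because $a\ge 0$, extends local solutions to global ones with a uniform-in-time $H^1$ bound. Scattering will follow once one establishes an a priori bound on a suitable spacetime Strichartz norm. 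I would first set up small-data scattering and a long-time perturbation/stability lemma for \eqref{nls} and then argue by contradiction: assuming scattering fails in $H^1$, there is a minimal threshold $E_c<\infty$ at which the failure first occurs.

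Next I would apply a linear profile decomposition in $H^1$ to a minimizing sequence of initial data and, via a Pythagorean decoupling of the associated nonlinear profiles, extract a critical non-scattering solution $u_*$. Because $a$ depends on $x$, spatial translation is no longer a symmetry of \eqref{nls}, yet the free profile decomposition still produces profiles with translation parameters $x_n\in\R$, and those with $|x_n|\to\infty$ must be matched to a limiting equation. Using $|\nabla a(x)|\to 0$ as $|x|\to\infty$, in the regime $p>4$ the limit is a standard defocusing NLS with some constant coefficient $a_\infty\ge 0$, whose large-data scattering is known; in the regime $2<p\le 4$ the hypothesis $a\in L^1$ forces $a_\infty=0$ and the limit is the free Schr\"odinger equation, whose solutions scatter trivially. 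In either case the profiles at infinity contribute nothing to the failure of scattering, so the critical element $u_*$ carries no translation parameter: its orbit $\{u_*(t):t\in\R\}$ is precompact in $H^1(\R)$ and stays uniformly spatially localized.

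The final step is to preclude $u_*$ via a Morawetz estimate in the style of Nakanishi. Taking the weight $\psi(x)=|x|$ (regularized near the origin), the standard Morawetz identity combined with the repulsivity $x\cdot\nabla a\le 0$ yields, after integration in time,
\[
\int_\R\!\int_\R \frac{|x\cdot\nabla a(x)|}{|x|}\,|u(t,x)|^{p+2}\,dx\,dt+(\text{linear terms})\;\lesssim\;\sup_{t}\|u(t)\|_{H^1}^2,
\]
the nonlinear term having the correct sign precisely because of repulsivity. Applied to the compact critical solution $u_*$ and combined with $|\nabla a|\to 0$ at infinity to produce a positive lower bound on the integrand on a region capturing a uniform fraction of the mass, this estimate forces the left-hand side to be infinite, a contradiction. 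The main obstacle is the profile-decomposition step in the mass-subcritical regime $2<p\le 4$, where the weak 1D dispersion and the spatially varying coefficient make the analysis of profiles translating to infinity delicate; the $L^1$ assumption on $a$ is precisely what lets one reduce those profiles to free Schr\"odinger evolutions. The Morawetz step is comparatively clean because the sign of $x\cdot\nabla a$ was designed for it, but the decay $|\nabla a|\to 0$ enters essentially when one converts the Morawetz output into a rigidity statement for the compact critical solution.
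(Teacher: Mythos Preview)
Your overall architecture---local theory, conservation laws, small-data scattering, stability, profile decomposition with an embedding result for profiles translating to infinity, and extraction of a compact critical element with no moving spatial center---matches the paper and is sound. The genuine gap is in the rigidity step.

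You propose the standard Lin--Strauss weight $\psi(x)=|x|$ (regularized). In one dimension this does not yield a coercive Morawetz estimate: the kinetic term in the Morawetz identity involves the Hessian $\psi''$, which for $|x|$ is a delta at the origin rather than a positive weight, and the bi-Laplacian error term $-\int |u|^2\psi''''\,dx$ has no sign and no time decay to make it integrable. This is exactly why the paper (following Nakanishi) uses the \emph{time-dependent} weight $b(t,x)=\sqrt{t^2+|x|^2}$: its Hessian produces the quantity $|(x+2it\nabla)u|^2/(t^2+|x|^2)^{3/2}$, and the error $|\Delta^2 b|\lesssim t^{-3}$ is integrable on $[1,\infty)$. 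The resulting estimate controls $\int_1^\infty \|u(t)\|_{Z(t)}^2\,\tfrac{dt}{t}$, where $\|\cdot\|_{Z(t)}$ is built from the pseudoconformal generator $x+2it\nabla$; the contradiction comes from showing $\|u_*(t)\|_{Z(t)}\gtrsim 1$ uniformly for $t\ge 1$ using compactness (if $u_*(t_n)\to\varphi$ and $t_n\to\infty$, then $\|u_*(t_n)\|_{Z(t_n)}\to 2\|\nabla\varphi\|_{L^2}>0$).

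Your proposed mechanism for rigidity---a lower bound on the integrand $\tfrac{|x\cdot\nabla a|}{|x|}|u|^{p+2}$---cannot work: this term vanishes identically when $a$ is constant (which is permitted for $p>4$), and more generally the hypothesis $|\nabla a|\to 0$ makes it \emph{small}, not bounded below, on large regions. In the paper the $x\cdot\nabla a$ term is discarded as having a favorable sign; the useful nonlinear term is $\Delta b\cdot a|u|^{p+2}$. Relatedly, the decay $|\nabla a(x)|\to 0$ is not used in the rigidity step at all; it enters only in the construction of scattering solutions for profiles with $|x_n|\to\infty$ (to show the error $[a(x)-a_\pm]|\tilde u|^p\tilde u$ is small in the dual Strichartz norm). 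So your assessment that ``the Morawetz step is comparatively clean'' inverts the actual difficulty: in $d=1$ it is precisely the step requiring a nonstandard idea.
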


To properly motivate Theorem~\ref{T}, let us first briefly review the large-data scattering theory for \eqref{nls} with $a\equiv 1$.  An $H^1$ scattering theory is available in the regime $\tfrac{4}{d}\leq p\leq \tfrac{4}{d-2}$ (with the upper bound always replaced by $p<\infty$ for $d\in\{1,2\}$).  The cases $p=\tfrac{4}{d}$ and $p=\tfrac{4}{d-2}$ are known as the \emph{mass-critical} and \emph{energy-critical} cases, respectively, and they require considerably more effort than the \emph{intercritical} regime.  As we will see, these cases are excluded from our main result, and therefore we will focus our discussion on the intercritical regime in what follows. 

In dimensions $d\geq 3$, $H^1$ scattering in the intercritical regime can be proven using a straightforward application of Morawetz estimates.  Broadly speaking, Morawetz estimates for \emph{defocusing} NLS come in two varieties: Lin--Strauss Morawetz type estimates (also known as the radial Morawetz estimates) and interaction Morawetz estimates\footnote{The closely-related virial identity is used more often in focusing problems, typically in the context of the concentration-compactness approach.}  Both types of estimates can be used to obtain scattering (even for non-radial data), although it turns out to be much simpler to use interaction Morawetz estimates.  We refer the reader to \cite{S, C, M-expo} for more details. 

In dimensions $d\in\{1,2\}$, the standard Morawetz weights no longer lead to coercive estimates.  Nonetheless, Nakanishi succeeded in developing new Lin--Strauss type Morawetz estimates in dimensions $d\in\{1,2\}$, which he used to obtain scattering in the intercritical regime (see \cite{N}).  As these estimates do not directly imply critical spacetime bounds, Nakanishi's proof of scattering relied on an implementation of the induction on energy argument pioneered by Bourgain \cite{Bourgain}. This problem was later revisited in \cite{CHVZ, CGT}, and the proof of scattering was greatly simplified by means of new interaction Morawetz estimates established in dimensions $d\in\{1,2\}$. 

We now return to the discussion of inhomogeneous NLS.  The model that has received the most attention to date involves nonlinearities of the form $\pm|x|^{-b}|u|^p u$ with $b>0$, and indeed our treatment of \eqref{nls} shares many similarities with the analysis of this special case.  Particular attention has been given to the focusing version of this equation and the problem of scattering below the ground state, although the defocusing problem has been considered as well (see e.g. \cite{MMZ, CFGM, M-INLS, FarahG, CC, CLZ, CLZ2, GM, AGT, LMZ}). Because this model retains a scaling symmetry, the virial identity continues to be a particularly effective tool in the analysis (as in the case of the standard power-type NLS).

The inclusion of a more general inhomogeneity generally destroys the scaling symmetry of the equation.  Nonetheless, the inclusion of a nonnegative repulsive inhomogeneity turns out to be compatible with the calculations used to obtain Lin--Strauss Morawetz type estimates.  As in the case $a\equiv 1$, it is then fairly straightforward to use such estimates to prove $H^1$ scattering in the intercritical regime in dimensions $d\geq 3$ (see e.g. \cite{W}). However, because the inhomogeneity breaks translation invariance, it is not clear whether one can hope to prove \emph{interaction} type Morawetz estimates in this setting.  

This leaves the question of scattering in the presence of repulsive inhomogeneities in dimensions $d\in\{1,2\}$.  In this setting, it turns out that one can still prove Morawetz estimates of the type appearing in \cite{N}; however, as described above, these estimates do not immediately imply scattering via control of some critical spacetime norm.  As in the case of \cite{N}, one therefore needs to utilize some kind of induction on energy argument.  In this work, we employ the closely-related strategy of concentration-compactness and contradiction in order to prove scattering.

In Theorem~\ref{T}, the cases $p>4$ correspond to the standard intercritical regime for NLS.  We note that the assumptions on $a$ in $p>4$ guarantee that $a(x)\to a_\pm$ as $x\to\pm\infty$ for some $a_\pm\geq 0$.  In particular, the inhomogeneity is not necessarily assumed to be localized in space (although it must be asymptotically constant as $x\pm\infty$).\footnote{We remark that the assumption $|\nabla a(x)|\to 0$ as $|x|\to\infty$ is included for the sake of some technical simplification in Proposition~\ref{P:SFAP} below, particularly in the range $p>4$. In fact, this assumption could be removed, for example by developing a stability theory using smallness in norms with only fractional derivatives.} In this work, we also observe that by imposing some decay on the inhomogeneity, we can actually lower the power of the nonlinearity while still treating the problem as `intercritical'.  One way to see this is to consider the following nonlinear estimate arising via $1d$ Strichartz estimates in the Duhamel formulation of the problem: 
\[
\| a |u|^p u \|_{L_t^{\frac43} L_x^1} \lesssim \|a\|_{L_x^\rho} \|u\|_{L_t^{2p} L_x^r}^{p} \|u\|_{L_t^4 L_x^\infty},\quad r=\tfrac{p\rho}{\rho-1}.
\]
For NLS problems, the space $L_t^{2p} L_x^r$ scales like $\dot H^s$, where $s=\tfrac12-\tfrac{2}{p}+\tfrac{1}{p\rho}$. In particular, when $\rho=\infty$ the value $s$ corresponds to the usual critical regularity of the problem with $a\equiv 1$. However, by choosing $\rho$ finite, we can increase the regularity of this space.  Under the strongest assumption $\rho=1$, the regularity becomes $s=\tfrac12-\tfrac1p$.  Thus the problem remains `intercritical' in some sense provided $p>2$.  As we will see, many of our techniques break down precisely as we approach the $p=2$ endpoint.  In fact, the case $p=2$ with $a\in L^1$ is a particularly interesting case, as it now resembles a mass-critical problem (cf. \cite{HKV}).  We plan to address this problem in future work. 

Similar considerations in dimensions $d\geq 2$ suggest that an $H^1$ scattering theory should be possible for arbitrarily small $p>0$.  In fact, it is straightforward to prove this in dimensions $d\geq 3$ by a fairly direct implementation of Lin--Strauss type Morawetz estimates (and by taking slightly stronger assumptions on the decay of $a$).  We contend that the same result will hold in dimension $d=2$.  However, as low-power nonlinearities introduce various new technical issues, we have elected to focus on the case $d=1$ in this paper, and plan to address the case $d=2$ in subsequent work. 

To close this discussion, let us also remark that a scattering theory for the standard power-type NLS is possible even for mass-subcritical nonlinearities; however, such results typically require weighted assumptions on the initial data (see e.g. \cite{CW, BGTV, TY, M-expo}).  In particular, for $\tfrac{2}{d}<p<\tfrac{4}{d}$, initial data in $\Sigma=\{f\in H^1:xf\in L^2\}$ leads to scattering in $H^1$; in fact, the scattering can be shown to hold in $\Sigma$ provided $p$ exceeds the `Strauss exponent'.  The essential `Morawetz-type' estimate used in this context is the pseudoconformal energy estimate, which is based on the virial identity.  As a version of this estimate can still be shown to hold in the presence of a nonnegative repulsive inhomogeneity, we can also adapt this approach in a straightforward way to prove $H^1$ scattering for \eqref{nls} with initial data in $\Sigma$.  In order to work with $H^1$ data as in Theorem~\ref{T}, however, it seems to be necessary to utilize some kind of induction on energy argument.  

In the rest of the introduction, we briefly outline the proof of Theorem~\ref{T}.  

First, under the assumptions of Theorem~\ref{T}, it is not difficult to obtain global well-posedness and $H^1$ boundedness for \eqref{nls}.  One can first obtain subcritical-type local well-posedness in $H^1$ via the usual arguments (i.e. one constructs a solution to the Duhamel formula using Strichartz estimates and a contraction mapping argument). To obtain global well-posedness, one observes that solutions to \eqref{nls} enjoy conservation of mass and energy, where the mass and energy are defined by 
\begin{equation}\label{MEdef}
M(u) = \int_\R |u(x)|^2\,dx \qtq{and} E(u) = \int_\R \tfrac12 |\nabla u(x)|^2 + \tfrac{a(x)}{p+2}|u(x)|^{p+2}\,dx,
\end{equation}
respectively. Using these conservation laws and nonnegativity of $a$, one can readily obtain \emph{a priori} $H^1$ bounds for solutions.  Under the assumptions of Theorem~\ref{T}, such bounds readily imply global existence for solutions to \eqref{nls}.  For more details, we refer the reader to Section~\ref{S:LWP} (and more generally, to the textbook \cite{C}). 

As mentioned above, although it is possible to establish $1d$ Morawetz estimates in the presence of a nonnegative repulsive nonlinearity, these estimates do not immediately imply sufficient control over space-time norms to obtain scattering.  In this work, we utilize the concentration-compactness approach to bring these estimates to bear.  The argument proceeds as follows:  

First, the arguments used to obtain local well-posedness show that scattering holds for solutions with sufficiently small data in $H^1$.  In particular, scattering holds if $M(u)+E(u)$ is sufficiently small.  Thus if scattering fails in general, there must be some critical value of $M(u)+E(u)$, say $E_c$, at which we can witness this failure, in the sense that we can construct a (necessarily compact) solution with $M(u)+E(u)=E_c$ that fails to scatter.  The strategy for constructing such a solution (using linear and nonlinear profile decompositions) is standard by now, but the presence of the inhomogeneity introduces some new challenges in our setting, particularly due to the broken translation symmetry\footnote{We note that the broken scaling symmetry does not present any significant difficulty in our setting, as our problem is scaling subcritical.}.  The most important technical point is the construction of scattering solutions corresponding to initial data translated far from the origin (see Proposition~\ref{P:SFAP}).  Drawing inspiration from works such as \cite{KMVZZ, KMVZ, MMZ, CFGM} (including works on the inhomogeneous NLS with nonlinearities of the form $|x|^{-b}|u|^p u$), we accomplish this by using scattering solutions to the appropriate limiting model (which is either the linear Schr\"odinger equation or the standard power-type NLS, depending on the limit of $a(x)$ at $\pm\infty$) and appealing to the stability theory for \eqref{nls}.  In particular, this part of the argument relies crucially on the existing scattering theory for the pure power-type NLS in the range $p>4$.  We ultimately prove that if Theorem~\ref{T} fails, then we can construct a solution to \eqref{nls} that has pre-compact orbit in $H^1$.  Importantly, because of Proposition~\ref{P:SFAP}, there is no need to include a moving spatial in order to obtain compactness. For complete details, see Section~\ref{S:Reduction}. 

In Section~\ref{S:Preclusion}, we prove Morawetz estimates for \eqref{nls} (in the spirit of Nakanishi \cite{N}).  We then demonstrate that these estimates are incompatible with the compact solutions constructed in Section~\ref{S:Reduction}, thus completing the proof of Theorem~\ref{T}.  As mentioned above, the estimates we prove are \emph{not} interaction-type Morawetz estimates, and indeed we were unable to establish interaction-type estimates in the presence of an inhomogeneity.  In particular, our estimates prevent concentration only at a fixed point (in our case, the origin).  Nonetheless, they are sufficient in our setting, as we have obtained pre-compactness without the need for any moving spatial center.  It is worth noting that in Nakanishi's original work on power-type NLS, he is able to utilize non-interaction Morawetz estimates despite the fact that concentration may occur anywhere in space.  {In order to achieve this, he additionally used combinatorial arguments in the spirit of Bourgain (in the context of induction on energy) to essentially reduce to the case of concentration at a single point.}

The rest of the paper is organized as follows: In Section~\ref{S:prelim}, we introduce notation and collect various preliminary lemmas. In Section~\ref{S:LWP}, we discuss well-posedness and stability for \eqref{nls}.  In Section~\ref{S:Reduction}, we show that if Theorem~\ref{T} fails then we may construct a solution with pre-compact orbit in $H^1$.  In Section~\ref{S:Preclusion}, we then prove Morawetz estimates in the spirit of \cite{N} and use these estimates to preclude the possibility of such compact solutions, thus completing the proof of Theorem~\ref{T}.

\subsection*{Acknowledgements} L.B. and J.M. were supported by NSF grant DMS-2350225. J.M. was also supported by Simons Foundation grant MPS-TSM-00006622.

\section{Preliminaries}\label{S:prelim}
\subsection{Notation}
We write $A\lesssim B$ to denote that $A\leq CB$ for some $C>0$. We use the space-time Lebesgue norm
\[
\|u\|_{L_t^\alpha L_x^\beta(I\times \R^d)}=\|\|u(t,\cdot)\|_{L_x^\beta(\R^d)}\|_{L_t^\alpha(I)}.
\]
We assume familiarity with standard tools of harmonic analysis, such as Littlewood--Paley projections and associated estimates such as Bernstein estimates and the square function estimate. We use the standard notation $P_Nf$ or $f_N$ for the xprojection to frequency $N\in 2^{\mathbb{Z}}$. 

We denote the free Schr\"odinger propagator by $e^{it\Delta}$.  This is the Fourier multiplier operator with symbol $e^{-it|\xi|^2}$.  We recall the \emph{Duhamel formula} for solutions to $(i\partial_t + \Delta) u = F$:
\begin{equation}\label{Duham}
u(t)=e^{it\Delta}u_0-i\int_0^te^{i(t-s)\Delta}F(s)\,ds,\qtq{where} u_0=u|_{t=0}.
\end{equation}

\subsection{Basic Estimates}
We will need the Strichartz estimates for $e^{it\Delta}$.  We state the estimates in general space dimension $d$. 

A pair $(\alpha,\beta)$ is \textit{admissible} if $2\leq \alpha,\beta\leq \infty$, $\frac{2}{\alpha}+\frac{d}{\beta}=\frac{d}{2}$, and $(\alpha,\beta,d)\not=(2,\infty,2)$.

\begin{theorem}[Strichartz estimates \cite{GV, S2, KT}]
For any admissible pairs $(\alpha,\beta)$, $(\tilde \alpha,\tilde \beta)$ and $u:I\times\R^d\to\C$ a solution to $(i\partial_t+\Delta)u=F$ for some $F$, we have
\[
\|u\|_{L_t^\alpha L_x^\beta(I\times\R^d)}\lesssim \|u(t_0)\|_{L_x^2(\R^d)}+\|F\|_{L_t^{\tilde{\alpha}'}L_x^{\tilde{\beta}'}(I\times \R^d)}
\]
for any $t_0\in I$.  Here $'$ denotes the H\"older dual. 
\end{theorem}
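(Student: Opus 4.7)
The plan is to prove the Strichartz bound via the classical $TT^*$ method, starting from the two fundamental estimates for the free propagator: conservation of mass $\|e^{it\Delta}f\|_{L^2_x}=\|f\|_{L^2_x}$ and the dispersive decay estimate $\|e^{it\Delta}f\|_{L^\infty_x}\lesssim |t|^{-d/2}\|f\|_{L^1_x}$, which follows directly from the explicit oscillatory-integral kernel. Complex interpolation between these two endpoints yields the $L^{\beta'}\to L^\beta$ decay
\[
\|e^{it\Delta}f\|_{L^\beta_x(\R^d)}\lesssim |t|^{-d(1/2-1/\beta)}\|f\|_{L^{\beta'}_x(\R^d)},\qquad 2\leq\beta\leq\infty,
\]
which is the only analytic ingredient about the Schr\"odinger group that enters the rest of the argument.

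Next I would prove the homogeneous version $\|e^{it\Delta}f\|_{L^\alpha_t L^\beta_x}\lesssim\|f\|_{L^2_x}$ for admissible $(\alpha,\beta)$ with $\alpha>2$. Setting $Tf(t)=e^{it\Delta}f$, this is equivalent (by $TT^*$ duality and Plancherel in time) to the bound $\|TT^*F\|_{L^\alpha_t L^\beta_x}\lesssim\|F\|_{L^{\alpha'}_t L^{\beta'}_x}$, where $TT^*F(t)=\int_\R e^{i(t-s)\Delta}F(s)\,ds$. Substituting the dispersive estimate inside the integral reduces this to the one-dimensional fractional-integration inequality of Hardy--Littlewood--Sobolev applied in the time variable, with the admissibility condition $\tfrac{2}{\alpha}+\tfrac{d}{\beta}=\tfrac{d}{2}$ being exactly the scaling compatibility needed for HLS to apply. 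This closes the homogeneous estimate away from the endpoint.

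The double endpoint $(\alpha,\beta)=(2,2d/(d-2))$ in dimension $d\geq 3$ is the main obstacle, since HLS fails on the line for this exponent and the naive $TT^*$ computation diverges. To handle it I would invoke the Keel--Tao bilinear interpolation machinery: a dyadic decomposition of the time-separation variable $|t-s|\sim 2^j$, combined with atomic Lorentz-space estimates and an abstract real-interpolation lemma, recovers the endpoint bound. The excluded triple $(2,\infty,2)$ reflects the genuine failure of the endpoint in dimension two, so no extension is possible there and this case is correctly omitted from the statement.

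Finally, to pass to the inhomogeneous Duhamel term I would either run a second $TT^*$-type argument for the operator $F\mapsto \int_\R e^{i(t-s)\Delta}F(s)\,ds$ with $F\in L^{\tilde\alpha'}_t L^{\tilde\beta'}_x$ and output in $L^\alpha_t L^\beta_x$ (the two admissibility conditions together again providing the scaling for HLS), or equivalently invoke the Christ--Kiselev lemma in the non-endpoint case to upgrade the unretarded integral to the causal one. Dependence on $t_0$ is handled trivially by splitting $u(t)=e^{i(t-t_0)\Delta}u(t_0)+\int_{t_0}^t e^{i(t-s)\Delta}F(s)\,ds$ and applying the triangle inequality with the two pieces already controlled. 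Aside from Keel--Tao at the endpoint, every step is routine, so the serious technical work is concentrated there.
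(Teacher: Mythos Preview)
Your outline is correct and follows the standard route to Strichartz estimates: dispersive decay plus $TT^*$ and Hardy--Littlewood--Sobolev for the non-endpoint pairs, Keel--Tao bilinear interpolation for the endpoint, and Christ--Kiselev (or a direct bilinear argument) to pass to the retarded Duhamel integral. There is nothing to compare against, however, because the paper does not prove this theorem at all --- it is stated as a known result with citations to \cite{GV, S2, KT} and used as a black box throughout. Your sketch is essentially the proof contained in those references, so in that sense you have reconstructed exactly what the authors are quoting.
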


We record here some basic pointwise estimates related to the nonlinearity.

\begin{lemma}\label{L:PW} Let $f(z)=|z|^p z$ and $u,w\in \C$.  Then we have the pointwise estimates
\begin{align}
|f(u+w)-f(u)| &\lesssim_p |w|^{p+1}+|w||u|^p, \label{E:BPW1} \\
|\nabla[f(u+w)-f(u)]|&\lesssim_p|\nabla w||w|^p+|\nabla w||u|^p+|\nabla u||w|^p+|\nabla u||w||u|^{p-1}. \label{E:BPW2}
\end{align}
\end{lemma}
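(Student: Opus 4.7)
The plan is to reduce both inequalities to two elementary facts about the Wirtinger derivatives $f_z, f_{\bar z}$ of $f(z)=|z|^p z$: first, the pointwise bound $|f_z(z)|+|f_{\bar z}(z)| \lesssim_p |z|^p$; and second, since $p>1$, the H\"older-type difference estimate
\[
|f_z(z)-f_z(z')|+|f_{\bar z}(z)-f_{\bar z}(z')| \lesssim_p |z-z'|\bigl(|z|^{p-1}+|z'|^{p-1}\bigr).
\]
Both follow from direct computation of the Wirtinger derivatives of $(z\bar z)^{p/2} z$ together with the triangle inequality $|z+z'|\le |z|+|z'|$ and the subadditivity/superadditivity of $t\mapsto t^q$ for $q\ge 0$.

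For \eqref{E:BPW1}, I would apply the real fundamental theorem of calculus to the map $s\mapsto f(u+sw)$ on $[0,1]$, producing
\[
f(u+w)-f(u)=w\int_0^1 f_z(u+sw)\,ds+\bar w\int_0^1 f_{\bar z}(u+sw)\,ds.
\]
Taking absolute values and estimating $|f_z(u+sw)|+|f_{\bar z}(u+sw)| \lesssim (|u|+|w|)^p \lesssim_p |u|^p+|w|^p$ yields \eqref{E:BPW1} immediately.

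For \eqref{E:BPW2}, I would first apply the chain rule to write
\begin{align*}
\nabla[f(u+w)-f(u)] &= [f_z(u+w)-f_z(u)]\,\nabla u + f_z(u+w)\,\nabla w \\
&\quad + [f_{\bar z}(u+w)-f_{\bar z}(u)]\,\nabla \bar u + f_{\bar z}(u+w)\,\nabla \bar w.
\end{align*}
The two terms involving $\nabla w$ are bounded using the pointwise size estimate, contributing $(|u|^p+|w|^p)|\nabla w|$; the two difference terms are controlled using the H\"older-type estimate above with $z=u+w$, $z'=u$, contributing $|w|\bigl(|u|^{p-1}+|w|^{p-1}\bigr)|\nabla u|$. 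Adding these four pieces recovers exactly the right-hand side of \eqref{E:BPW2}.

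There is no real obstacle here: these are routine algebraic manipulations standard in the NLS literature. The only point worth flagging is that the H\"older-type difference estimate for $f_z, f_{\bar z}$ requires $p\ge 1$, which is much weaker than the standing hypothesis $p>2$ of the paper.
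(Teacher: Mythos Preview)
Your proof is correct and follows essentially the same route as the paper. The paper uses the identical integral identity for \eqref{E:BPW1}, and for \eqref{E:BPW2} records the second-derivative bounds $|f_{zz}|+|f_{z\bar z}|+|f_{\bar z\bar z}|\lesssim_p |z|^{p-1}$, which upon one more application of the fundamental theorem of calculus yield exactly your H\"older-type difference estimate for $f_z,f_{\bar z}$; the remaining chain-rule bookkeeping is the same.
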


\begin{proof} The estimates follow from the identity
\[
f(u+w)-f(u) = w\int_0^1 f_z(u+\theta w)\,d\theta + \bar w\int_0^1 f_{\bar z}(u+\theta w)\,d\theta,
\]
together with the chain rule together and the estimates
\[
|f_z| + |f_{\bar z}| \lesssim_p |z|^p \qtq{and} |f_{\bar z z}|+|f_{zz}|+|f_{\bar z \bar z}| \lesssim |z|^{p-1}. 
\]
\end{proof}

Similar considerations yield the following additional nonlinear estimate.

\begin{lemma}\label{L:PW2} Let $f(z)=|z|^p z$ and $v_j:\R\to\R$.  Then
\[
\biggl|\sum_{j=1}^J \nabla[f(v_j)] - \nabla \biggl[f\biggl(\sum_{j=1}^J v_j\biggr)\biggr]\biggr| \lesssim_J \sum_{j\neq k} |\nabla v_j| |v_k|^p + \sum_{j\neq k} |\nabla v_j| |v_j|^{p-1} |v_k|. 
\]
\end{lemma}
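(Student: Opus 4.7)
The plan is to combine the chain rule with the fundamental theorem of calculus, reducing the statement to a pointwise bound on differences of $f_z$ and $f_{\bar z}$ in the spirit of the proof of Lemma~\ref{L:PW}.

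First, using the chain-rule identity $\nabla[f(v)] = f_z(v)\nabla v + f_{\bar z}(v)\nabla \bar v$ (as derived in the proof of Lemma~\ref{L:PW}), I would rewrite the left-hand side as
\[
\sum_{j=1}^J \Bigl[f_z(v_j) - f_z\Bigl(\sum_{k=1}^J v_k\Bigr)\Bigr]\nabla v_j + \sum_{j=1}^J \Bigl[f_{\bar z}(v_j) - f_{\bar z}\Bigl(\sum_{k=1}^J v_k\Bigr)\Bigr]\nabla \bar v_j,
\]
which reduces the problem to a pointwise estimate on $|f_z(v_j) - f_z(\sum_k v_k)|$ and its $\bar z$-counterpart.

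Setting $u = v_j$ and $w = \sum_{k\neq j} v_k$, I would then apply the fundamental theorem of calculus to $\theta \mapsto f_z(u + \theta w)$, together with the second-derivative bound $|f_{zz}| + |f_{z\bar z}| + |f_{\bar z \bar z}| \lesssim |z|^{p-1}$ already recorded in the proof of Lemma~\ref{L:PW}, to obtain
\[
|f_z(u+w) - f_z(u)| \lesssim |w| \int_0^1 |u + \theta w|^{p-1}\,d\theta \lesssim |w|\bigl(|u|^{p-1} + |w|^{p-1}\bigr),
\]
where the last step uses $p - 1 > 1$ (from the hypothesis $p > 2$) to distribute the $(p-1)$-power over a sum.

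To conclude, I would multiply by $|\nabla v_j|$, sum over $j$, and expand the resulting sums via $\bigl|\sum_{k\neq j} v_k\bigr|^m \lesssim_J \sum_{k\neq j} |v_k|^m$ for $m \in \{1, p\}$. This produces exactly the two off-diagonal cross terms appearing in the statement. I do not expect any serious obstacle: the argument is essentially a telescoping refinement of Lemma~\ref{L:PW}, and the only technical care needed is in the Wirtinger decomposition in the complex setting, which is already handled in the preceding proof.
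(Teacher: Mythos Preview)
Your proposal is correct and is precisely the approach the paper intends: the paper does not give a detailed proof, saying only that ``similar considerations'' to Lemma~\ref{L:PW} yield the result, which is exactly your chain-rule plus fundamental-theorem-of-calculus argument applied to $f_z$ and $f_{\bar z}$. One minor remark: the step $|u+\theta w|^{p-1}\lesssim |u|^{p-1}+|w|^{p-1}$ does not actually require $p-1>1$; it holds for every $p>1$, so your appeal to $p>2$ there is unnecessary (though harmless in the paper's setting).
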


We now define some exponents that will be used throughout the rest of the paper. Given $p\geq 2$, we define
\begin{equation}\label{def:rq}
r=r(p) = \begin{cases} p&p>4 \\ \frac{4p}{p-2} & 2\leq p\leq 4, \end{cases}
\end{equation}
We use $L_t^{2p}L_x^{r(p)}$ as a scattering norm (cf. Section~\ref{S:LWP} below).  Observe by Sobolev embedding that
\begin{equation}\label{E:REstimate1}
\|u\|_{L_t^{2p} L_x^{r(p)}(I\times\R)}\lesssim \|u\|_{L_t^{2p}L_x^\frac{2p}{p-2}(I\times\R)}+\|\nabla u\|_{L_t^{2p}L_x^\frac{2p}{p-2}(I\times\R)},
\end{equation}
so that in particular by Strichartz
\begin{equation}\label{E:REstimate2}
\|e^{it\Delta}f\|_{L_t^{2p}L_x^{r(p)}(I\times\R)}\lesssim \|f\|_{H^1}.
\end{equation}

We define the regularity $s(p)$ by
\begin{equation}\label{sp}
s(p) = \begin{cases} \frac12-\frac2p & p>4 \\ \frac14-\frac1{2p} & 2\leq p\leq 4.\end{cases}
\end{equation}

We further define
\begin{equation}\label{def:rho}
\rho=\rho(p)=\begin{cases}
\infty & p>4\\
\frac{4}{6-p} & 2\leq p<4.
\end{cases}
\end{equation}
Throughout the paper, we will estimate $a$ and $\nabla a$ in $L_x^\rho(\R)$. 

We will frequently use the following basic nonlinear estimate, which follows from H\"older's inequality:

\begin{lemma}[Nonlinear Estimate]\label{L:StdNonlinEst} Let $f,g:I\times\R\to\C$ and $a:\R\to\R$. Then
\[
\|af|g|^p\|_{L_t^\frac{4}{3}L_x^1(I\times\R)}\lesssim \|a\|_{L_x^{\rho}}\|f\|_{L_t^4L_x^\infty(I\times\R)}\|g\|^p_{L_t^{2p}L_x^{r}(I\times\R)}.
\]
\end{lemma}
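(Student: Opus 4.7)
The proof is a direct application of Hölder's inequality, first in space and then in time, so my plan is simply to verify that the stated exponents conspire correctly.

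First I would apply the spatial Hölder inequality to the integrand $a(x)\,f(t,x)\,|g(t,x)|^p$, pairing $a$ with $L_x^\rho$, $f$ with $L_x^\infty$, and the $p$ factors of $g$ with $L_x^{r/p}$, i.e. using $\|\,|g|^p\,\|_{L_x^{r/p}}=\|g\|_{L_x^r}^p$. For this to be legal I need the exponent identity $\tfrac{1}{\rho}+\tfrac{p}{r}=1$. In the range $p>4$ we have $\rho=\infty$ and $r=p$, so the identity is immediate. In the range $2\le p\le 4$ we have $\tfrac{1}{\rho}=\tfrac{6-p}{4}$ and $\tfrac{p}{r}=\tfrac{p(p-2)}{4p}=\tfrac{p-2}{4}$, whose sum is $1$.

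Next I would apply Hölder in time to the resulting product $\|f(t)\|_{L_x^\infty}\,\|g(t)\|_{L_x^r}^p$. The natural pairing is $L_t^4$ for $f$ and $L_t^2$ for $|g|^p$ (noting $\|\,|g|^p\,\|_{L_t^2}=\|g\|_{L_t^{2p}}^p$), and one checks $\tfrac{1}{4}+\tfrac{1}{2}=\tfrac{3}{4}$, matching $L_t^{4/3}$. Combining the two steps yields
\[
\|a\,f\,|g|^p\|_{L_t^{4/3}L_x^1}\le \|a\|_{L_x^\rho}\,\|f\|_{L_t^4L_x^\infty}\,\|g\|_{L_t^{2p}L_x^r}^p,
\]
which is the claim (in fact with implicit constant $1$).

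There is no real obstacle here: the lemma is an exponent-counting exercise designed precisely so that the $L_t^{4/3}L_x^1$ norm is the Strichartz dual of the admissible pair $(4,\infty)$ in dimension one, which is the pair used to control $f$. The only thing worth double-checking is the borderline behavior as $p\downarrow 2$, where $\rho\to 1$, $r\to\infty$, and $s(p)\to 0$; the identity still holds, but the scattering norm degenerates, which is consistent with the authors' remark that the method breaks down at $p=2$.
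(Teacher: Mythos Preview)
Your proof is correct and matches the paper's approach exactly: the authors simply state that the lemma ``follows from H\"older's inequality,'' and your verification of the exponent identities $\tfrac{1}{\rho}+\tfrac{p}{r}=1$ in space and $\tfrac{1}{4}+\tfrac{p}{2p}=\tfrac{3}{4}$ in time is precisely what is needed.
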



Finally, we record a local smoothing estimate (cf. \cite{CS, Keraani, Sjo, Vega}) to be used in Section~\ref{S:Reduction}.
\begin{proposition}[Local smoothing]\label{P:LS} Let $K$ be a compact set in $\R\times\R$.  Then
\[
\|\nabla e^{it\Delta}\varphi\|_{L_{t,x}^2(K)} \lesssim_K \| |\nabla|^{\frac12}\varphi\|_{L^2}. 
\]
Consequently,
\[
\| \nabla e^{it\Delta}\varphi\|_{L_{t,x}^2(K)} \lesssim_K \| e^{it\Delta} \varphi\|_{L_t^{2p}L_x^r}^{\frac13} \|\nabla \varphi\|_{L^2}^{\frac23}. 
\]
\end{proposition}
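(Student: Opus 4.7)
My plan is to obtain the first inequality as a direct consequence of the classical Kato--Sj\"olin--Vega $1d$ local smoothing estimate
\[
\||\nabla|^{\frac12} e^{it\Delta}\psi\|_{L_x^\infty L_t^2(\R\times\R)} \lesssim \|\psi\|_{L^2},
\]
and then to upgrade it to the second via a standard high/low frequency decomposition. Restricting the above inequality to the compact set $K$ (which has bounded spatial projection) gives $\||\nabla|^{\frac12} e^{it\Delta}\psi\|_{L_{t,x}^2(K)} \lesssim_K \|\psi\|_{L^2}$ by H\"older. Substituting $\psi = |\nabla|^{\frac12}\varphi$ and commuting $|\nabla|^{\frac12}$ with $e^{it\Delta}$ yields the first inequality.

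For the second inequality, I would write $\varphi = P_{\leq N}\varphi + P_{>N}\varphi$ for a parameter $N>0$ to be chosen. The high-frequency piece is controlled using the first inequality together with the elementary bound $\||\nabla|^{\frac12}P_{>N}\varphi\|_{L^2}\leq N^{-\frac12}\|\nabla\varphi\|_{L^2}$, yielding
\[
\|\nabla e^{it\Delta}P_{>N}\varphi\|_{L^2_{t,x}(K)} \lesssim_K N^{-\frac12}\|\nabla\varphi\|_{L^2}.
\]
For the low-frequency piece, since $r(p)\geq 2$ and $2p\geq 2$ and $K$ is compact, H\"older's inequality gives $\|f\|_{L^2_{t,x}(K)}\lesssim_K \|f\|_{L^{2p}_tL^r_x}$; combining this with Bernstein's inequality and boundedness of $P_{\leq N}$ on $L^{2p}_tL^r_x$ produces
\[
\|\nabla e^{it\Delta}P_{\leq N}\varphi\|_{L^2_{t,x}(K)} \lesssim_K N\|e^{it\Delta}\varphi\|_{L^{2p}_tL^r_x}.
\]
Summing the two estimates and optimizing at $N = \bigl(\|\nabla\varphi\|_{L^2}/\|e^{it\Delta}\varphi\|_{L^{2p}_tL^r_x}\bigr)^{2/3}$ balances the two terms and yields the claimed bound with exponents $\frac13$ and $\frac23$.

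No step is genuinely delicate: the first estimate is a direct consequence of a classical result, and the second is a routine frequency-decomposition interpolation done once at the optimal scale (so no square function identity is needed). The only mildly subtle point is verifying that $r\geq 2$ and $2p\geq 2$ so that $L^{2p}_tL^r_x(K)$ embeds into $L^2_{t,x}(K)$ on the compact set $K$, but this holds automatically from the definition \eqref{def:rq} for $p\geq 2$ (indeed $r\geq 8$ when $2\leq p\leq 4$, and $r=p>4$ when $p>4$).
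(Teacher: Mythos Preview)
Your proposal is correct and follows essentially the same argument as the paper: both invoke the classical local smoothing estimate for the first inequality, and for the second both perform a high/low frequency split, bound the high frequencies via local smoothing plus Bernstein to get $N^{-1/2}\|\nabla\varphi\|_{L^2}$, bound the low frequencies via H\"older on $K$ plus Bernstein to get $N\|e^{it\Delta}\varphi\|_{L_t^{2p}L_x^r}$, and then optimize in $N$. Your added remark verifying $r\geq 2$ and $2p\geq 2$ (so that the H\"older embedding $L_t^{2p}L_x^r(K)\hookrightarrow L_{t,x}^2(K)$ is valid) is a helpful detail that the paper leaves implicit.
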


\begin{proof} The first estimate is the standard local smoothing estimate for the linear Schr\"odinger equation \cite{CS, Sjo, Vega}).  To obtain the second statement, we let $N>0$ and treat low and high frequencies separately.  First, using H\"older's inequality followed by Bernstein estimates,
\begin{align*}
\| \nabla e^{it\Delta} P_{\leq N}\varphi\|_{L_{t,x}^2(K)} \lesssim_K N \|e^{it\Delta} \varphi\|_{L_t^{2p}L_x^r}.
\end{align*}
Next, by the local smoothing estimate and Bernstein estimates,
\begin{align*}
\|\nabla e^{it\Delta}P_{>N} \varphi\|_{L_{t,x}^2(K)} &\lesssim_K \| |\nabla|^{\frac12} \varphi\|_{L_x^2} \lesssim_K N^{-\frac12} \| \nabla \varphi\|_{L^2}.
\end{align*}
Optimizing in the choice of $N$ yields the result.
\end{proof}

\subsection{Concentration compactness} 

We will need the following linear profile decomposition adapted to the Strichartz estimate, which plays a crucial role in Section~\ref{S:Reduction} (the reduction to compact solution).  This result follows more or less directly from the arguments in \cite{FXC}, although we prove vanishing of the remainders in a different norm than the one considered in \cite{FXC}.  

In what follows, the energy is defined as in \eqref{MEdef} and $a$ is assumed to be \emph{admissible} (in the sense that it satisfies the hypotheses of Theorem~\ref{T}).  We recall that $r(p)$ is the exponent defined in \eqref{def:rq}.

\begin{proposition}[Linear profile decomposition, \cite{FXC}]\label{P:LPD} Let $\{f_n\}$ be a bounded sequence in $H^1$.  Then there exists $J^*\in\{0,1,\dots,\infty\}$, nonzero profiles $\{\phi^j\}_{j=1}^{J^*}\subset H^1$, and space-time sequences $\{(t_n^j, x_n^j)\}_{j=1}^{J^*}\subset\R\times\R$ such that the following decomposition holds for each finite $J$:
\[
f_n(x) = \sum_{j=1}^{J} e^{it_n^j\Delta} \phi^j(x-x_n^j) + w_n^J(x),
\]
with $\{w_n^J\}$ bounded in $H^1$.  Moreover, the following properties hold: 
\begin{itemize}
\item The remainders $w_n^J$ vanish in the sense
\[
\lim_{J\to J^*}\lim_{n\to\infty} \| e^{it\Delta} w_n^J \|_{L_t^{2p} L_x^{r(p)}(\R\times\R)} = 0. 
\]
\item For each $j$, we have either $t_n^j\equiv 0$ or $|t_n^j|\to\infty$.  The same statement holds for the translation parameters $x_n^j$.  Furthermore, the profiles are asymptotically orthogonal in the sense that
\[
\lim_{n\to\infty} \bigl\{|t_n^j - t_n^k| + |x_n^j -x_n^k| \bigr\} = 0 \qtq{for any}j\neq k. 
\]
\item We have the following decoupling properties for any $J$:
\begin{align*}
M(f_n) = \sum_{j=1}^J M(\phi^j) + M(w_n^J) + o_n(1) \qtq{as} n\to\infty, \\
E(f_n) = \sum_{j-1}^J E(e^{it_n^j\Delta}\phi^j)+ E(w_n^J) + o_n(1) \qtq{as}n\to\infty. 
\end{align*}
\end{itemize}
\end{proposition}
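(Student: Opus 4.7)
The plan is to adapt the Keraani-style extraction argument to the scattering norm $L_t^{2p}L_x^{r(p)}$, following \cite{FXC} in outline and using \eqref{E:REstimate2} to keep profiles bounded in $H^1$. The central ingredient will be an inverse Strichartz inequality: if $\{f_n\}$ is bounded in $H^1$ with $\limsup_{n\to\infty}\|e^{it\Delta}f_n\|_{L_t^{2p}L_x^{r(p)}}\geq\varepsilon>0$, then after passing to a subsequence I would produce parameters $(t_n,x_n)\in\R\times\R$ and a nonzero profile $\phi\in H^1$, with $\|\phi\|_{H^1}$ bounded below by a positive power of $\varepsilon$, such that $e^{-it_n\Delta}f_n(\cdot+x_n)\rightharpoonup\phi$ weakly in $H^1$. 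The engine is a refined Strichartz inequality of the schematic form
\[
\|e^{it\Delta}f\|_{L_t^{2p}L_x^{r(p)}}\lesssim \|f\|_{H^1}^{1-\beta}\,\Bigl(\sup_{N\in 2^{\Z}}N^{s(p)}\|e^{it\Delta}P_Nf\|_{L_{t,x}^\infty}\Bigr)^{\beta},
\]
obtained by first using Sobolev embedding to pass to an $L^2$-admissible Strichartz pair (as in \eqref{E:REstimate1}--\eqref{E:REstimate2}) and then interpolating the dyadic pieces against a Besov endpoint. The parameters $(t_n,x_n)$ are extracted from a near-maximizer of the supremum, and weak compactness in $H^1$ produces $\phi$.

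Once the inverse Strichartz is in place, the decomposition is built by iteration: extract $\phi^1$ and $(t_n^1,x_n^1)$, set $w_n^1(x)=f_n(x)-e^{it_n^1\Delta}\phi^1(x-x_n^1)$, and repeat with $f_n$ replaced by $w_n^1$. Each extraction gives $\|\phi^j\|_{H^1}\gtrsim \varepsilon_j^{1/\beta}$, where $\varepsilon_j$ denotes the Strichartz-type norm at stage $j$; combined with the Pythagorean identity
\[
\|f_n\|_{H^1}^2=\sum_{j=1}^J\|\phi^j\|_{H^1}^2+\|w_n^J\|_{H^1}^2+o_n(1)
\]
and the uniform $H^1$ bound on $\{f_n\}$, this forces $\varepsilon_J\to 0$ as $J\to J^*$, which is the required vanishing. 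A diagonal subsequence enforces the dichotomy $t_n^j\equiv 0$ or $|t_n^j|\to\infty$ (and similarly for $x_n^j$), and asymptotic orthogonality of the pairs $(t_n^j,x_n^j)$ follows by induction on $j$: if two pairs failed to diverge, then the weak limit at the later stage would already have been absorbed into the earlier bubble when its remainder was formed, contradicting non-triviality of $\phi^k$.

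Mass and kinetic-energy decoupling then follow from $H^1$-weak convergence and asymptotic orthogonality in the standard way. The delicate point is that the potential part of $E$ decouples as $\sum_j E(e^{it_n^j\Delta}\phi^j)$ rather than $\sum_j E(\phi^j)$, since $a(x)$ breaks time-translation invariance. Expanding $\int a(x)|f_n|^{p+2}\,dx$ via an inductive variant of Lemma~\ref{L:PW} reduces the task to showing that cross terms of the form $\int a(x)|v_n^j|^{\sigma_1}|v_n^k|^{\sigma_2}\,dx$ with $j\neq k$ and $v_n^j=e^{it_n^j\Delta}\phi^j(\cdot-x_n^j)$ are $o_n(1)$. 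For these I would combine $a\in L^\infty$, the embedding $H^1(\R)\hookrightarrow L^\infty$, standard dispersive decay of $e^{it\Delta}\phi^j$ when $|t_n^j|\to\infty$, and---in the range $2<p\leq 4$---the $L^\rho$ integrability of $a$ to handle bubbles with $|x_n^j|\to\infty$.

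The main obstacle will be the first step, namely establishing the refined Strichartz inequality in precisely the form needed here. The pair $(2p,r(p))$ is not $L^2$-admissible in dimension one in either parameter range (which is why \cite{FXC} worked with a different norm), so one cannot apply Keraani-type arguments directly; instead one must lose a derivative via Sobolev and then carry the derivative loss carefully through the Littlewood--Paley decomposition. The weight $N^{s(p)}$ in the Besov endpoint must also be tuned to the scaling regularity \eqref{sp}, and the interpolation exponent $\beta$ should be verified to lie in an admissible range uniformly across $p>4$ and $2<p\leq 4$. Once this refined estimate is secured, the rest of the proof is orthogonality bookkeeping of a by-now standard sort.
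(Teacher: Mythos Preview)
Your overall strategy matches the paper's: prove an inverse Strichartz lemma, iterate to extract profiles, and verify orthogonality and decoupling. Two technical differences are worth noting.

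First, the paper sidesteps the space-time refined Strichartz that you flag as the main obstacle. Instead of proving an estimate with a Besov-$L_{t,x}^\infty$ endpoint, it decouples time and space: H\"older's inequality in time gives
\[
\|e^{it\Delta}f\|_{L_t^{2p}L_x^r}\lesssim \|e^{it\Delta}f\|_{L_t^\infty L_x^r}^{\theta_1}\|f\|_{H^1}^{1-\theta_1},
\]
which selects a single time $t_n$; then a purely \emph{spatial} refined Sobolev embedding
\[
\|g\|_{L^r}\lesssim \sup_N \|P_Ng\|_{L^r}^{\theta}\,\|g\|_{H^1}^{1-\theta}
\]
(proved directly via the square function, Lemma~\ref{L:RSE}) is applied to $g=e^{it_n\Delta}f_n$ to find the scale $N_n$, after which interpolation to $L^\infty$ yields $x_n$. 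The $H^1$ bound forces $N_n$ into a bounded dyadic range, and weak compactness in $H^1$ produces the profile. This two-step reduction is more elementary than the space-time refinement you propose and avoids the admissibility issue you raise for $(2p,r(p))$.

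Second, for potential-energy decoupling the paper does not expand all profile--profile cross-terms at the end. It records the Br\'ezis--Lieb identity
\[
\int a|f_n|^{p+2}\,dx = \int a|\phi_n|^{p+2}\,dx + \int a|f_n-\phi_n|^{p+2}\,dx + o_n(1)
\]
at each extraction step (a.e. convergence coming from weak $H^1$ convergence and Rellich) and telescopes. This automatically handles profile--remainder interactions, which your cross-term sketch does not explicitly address; with your approach you would also need to show the mixed terms involving $w_n^J$ vanish, using the weak convergence $e^{-it_n^j\Delta}w_n^J(\cdot+x_n^j)\rightharpoonup 0$ rather than smallness of $w_n^J$ in $H^1$.
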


As mentioned above, the proof follows from the arguments in \cite{FXC}.  Thus we will only provide a sketch of the proof here and refer the reader to \cite{FXC} for further details.  We further refer the reader to \cite{KV, V} for high-quality exposition of these techniques.  The key is to identify `bubbles of concentration' in the sequence, which are subsequently removed until the remainder vanishes in the sense described in the statement of the proposition.  To extract an individual bubble, one must identify a physical scale and a point in space-time at which concentration occurs.  Because we work with $H^1$-data and the norm for the remainder scales like some $\dot H^s$ for $s\in(0,1)$, we can ultimately set the scale parameters equal to one.  

The key to identifying a physical scale is the following lemma. 

\begin{lemma}[Refined Sobolev embedding]\label{L:RSE} Let $2<r<\infty$.  Then there exists some $\theta=\theta(r)$ such that for any $f\in H^1(\R)$,
\[
\|f\|_{L^r}\lesssim \sup_N \|P_N f\|_{L^r}^\theta \|f\|_{H^1}^{1-\theta}.
\]
\end{lemma}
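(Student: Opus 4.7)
The plan is a standard Littlewood--Paley argument combined with Bernstein interpolation. First, I would invoke the square function estimate $\|f\|_{L^r}\sim \|Sf\|_{L^r}$, valid for $r\in(1,\infty)$, where $Sf=(\sum_N |P_N f|^2)^{1/2}$. Since $r>2$, Minkowski's inequality at the level of $L^{r/2}_x$ yields
\[
\|f\|_{L^r}^2 \lesssim \|Sf\|_{L^r}^2 = \Bigl\|\sum_N |P_N f|^2\Bigr\|_{L^{r/2}} \leq \sum_N \|P_N f\|_{L^r}^2,
\]
reducing the problem to a scalar estimate on the sum $\sum_N \|P_N f\|_{L^r}^2$.

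Next, for any $\epsilon\in(0,2)$, the elementary pointwise bound $a_N^\epsilon \leq (\sup_M a_M)^\epsilon$ applied with $a_N=\|P_N f\|_{L^r}$ extracts the desired sup:
\[
\sum_N \|P_N f\|_{L^r}^2 \leq \bigl(\sup_N \|P_N f\|_{L^r}\bigr)^{\epsilon} \sum_N \|P_N f\|_{L^r}^{2-\epsilon}.
\]
The remaining sum is handled by the $1d$ Bernstein estimate $\|P_N f\|_{L^r}\lesssim N^{1/2-1/r}\|P_N f\|_{L^2}$ together with $\|P_N f\|_{L^2}\lesssim \min(1,N^{-1})\|f\|_{H^1}$. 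The resulting geometric series $\sum_{N\in 2^{\Z}} N^{(1/2-1/r)(2-\epsilon)}\min(1,N^{-(2-\epsilon)})$ converges at low frequencies (needing $r>2$, so that $1/2-1/r>0$) and at high frequencies (needing $1/2-1/r<1$, which is automatic), giving $\sum_N \|P_N f\|_{L^r}^{2-\epsilon}\lesssim \|f\|_{H^1}^{2-\epsilon}$. Taking square roots and setting $\theta=\epsilon/2\in(0,1)$ then yields the claim.

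There is no serious obstacle in this proof; the ingredients are entirely standard. The only subtlety is to juggle three distinct Lebesgue-type exponents consistently: $L^r$ appearing in the statement, $L^{r/2}$ introduced by the Minkowski step (which requires $r\geq 2$), and $L^2$ introduced by Bernstein (which is where the strict inequality $r>2$ enters, through the positivity of the low-frequency exponent $1/2-1/r$). In particular, the argument naturally produces a one-parameter family of valid exponents $\theta\in(0,1)$, which is all the application to concentration-compactness in Section~\ref{S:Reduction} will require.
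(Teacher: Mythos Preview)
Your argument is correct. The Minkowski step in $L^{r/2}$ is valid precisely because $r>2$, Bernstein gives the claimed frequency-localized bound, and the resulting dyadic sum converges on both sides since $0<\tfrac12-\tfrac1r<1$. The conclusion follows with any $\theta=\epsilon/2\in(0,1)$.

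Your route is genuinely different from the paper's. The paper splits into cases: for $2<r\leq 4$ it uses concavity of $t\mapsto t^{r/4}$ to pass from the square function to a double sum $\sum_{N_1\leq N_2}\int |f_{N_1}|^{r/2}|f_{N_2}|^{r/2}\,dx$, then applies H\"older, Bernstein, and a Schur-type off-diagonal summation; for $r>4$ it first reduces to $L^4$ via a Gagliardo--Nirenberg inequality and then invokes the previous case. This yields a specific exponent $\theta=\tfrac{r-2}{r}$ in the first range. Your approach is more elementary and uniform across all $r\in(2,\infty)$: by using Minkowski rather than concavity you avoid both the double-sum/Schur machinery and the case split, at the (harmless) cost of not pinning down a particular $\theta$. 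For the intended application in Lemma~\ref{L:ISE} either argument suffices, since only the existence of some $\theta\in(0,1)$ is used.
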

\begin{proof}
For $2<r\leq 4$ we estimate using the Littlewood--Paley square function estimate, concavity of fractional powers, and Bernstein estimates:
\begin{align*}
\|f\|_{L^r}^r
&\lesssim\sum_{N_1\leq N_2}\int|f_{N_1}|^\frac{r}{2}|f_{N_2}|^\frac{r}{2}\,dx\\
&\lesssim\sum_{N_1\leq N_2}\|f_{N_1}\|_{L^\frac{2r}{4-r}}\|f_{N_1}\|_{L^r}^\frac{r-2}{2}\|f_{N_2}\|_{L^r}^\frac{r-2}{2}\|f_{N_2}\|_{L^2}\\
&\lesssim\sup_N\|f_N\|_{L^r}^{r-2}\sum_{N_1\leq N_2}N_1^{-[\frac{r-2}{2r}]}N_2^{-[\frac{r-2}{2r}]}\||\nabla|^\frac{r-2}{2r}f_{N_1}\|_{L^\frac{2r}{4-r}}\|f_{N_2}\|_{L^2}\\
&\lesssim\sup_N\|f_N\|_{L^r}^{r-2}\sum_{N_1\leq N_2}\bigl(\tfrac{N_1}{N_2}\bigr)^\frac{r-2}{2r}\||\nabla|^\frac{r-2}{2r}f_{N_1}\|_{L^2}\||\nabla|^\frac{r-2}{2r}f_{N_2}\|_{L^2}\\
&\lesssim\sup_N\|f_N\|_{L^r}^{r-2}\||\nabla|^\frac{r-2}{2r}f\|_{L^2}^2.
\end{align*}

For $4<r<\infty$ we instead first use the Gagliardo--Nirenberg type estimate
\[
\|f\|_{L^r} \lesssim \|f\|_{L^4}^{\frac23+\frac{4}{3r}}\|\nabla f\|_{L^2}^{\frac13-\frac4{3r}}
\]
and then estimate the $L^4$-norm as we did above. 
\end{proof}

Using the refined Sobolev embedding, we can extract bubbles of concentration as follows.  From this point forward, we fix $r=r(p)$ as in \eqref{def:rq}. 

\begin{lemma}[Inverse Strichartz]\label{L:ISE}
Let $\{f_n\}$ be a sequence in $H^1$ satisfying
\[
\lim_{n\to\infty}\|f_n\|_{H^1}=A\qtq{and}\lim_{n\to\infty}\|e^{it\Delta}f_n\|_{L_t^{2p}L_x^r}=\varepsilon>0.
\]
Passing to a subsequence, there exist $\phi\in H^1$ with
\begin{equation}\label{E:ISE1}
\|\phi\|_{H^1}\gtrsim A(\tfrac{\varepsilon}{A})^\alpha
\end{equation}
for some $\alpha=\alpha(p,r)\in (0,1)$, as well as $(t_n,x_n)\in\R\times\R$ such that
\begin{align}
&e^{it_n\Delta}f_n(x+x_n)\rightharpoonup \phi(x)\qtq{weakly in } H^1, \label{E:ISE2} \\
&\liminf_{n\to\infty}\left\{\|f_n\|_{H^\lambda}^2-\|f_n-\phi_n\|_{H^\lambda}^2-\|\phi\|_{H^\lambda}^2\right\}=0,\label{E:ISE3}
\end{align}
for $\lambda\in\{0,1\}$. Furthermore,
\begin{equation}\label{E:ISE4}
\liminf_{n\to\infty}\left\{\|a\,f_n\|_{L_x^{p+2}}^{p+2}-\|a\,[f_n-\phi_n]\|_{L_x^{p+2}}^{p+2}-\|a\,\phi_n\|_{L_x^{p+2}}^{p+2}\right\}=0,
\end{equation}
where $\phi_n = e^{it_n\Delta}\phi(x-x_n)$.
\end{lemma}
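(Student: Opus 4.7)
The proof follows the classical inverse Strichartz bubble extraction, adapted to the $1d$ scale-subcritical $H^1$ setting. Since the critical regularity $s(p)$ is strictly less than $1$, concentration occurs only at unit spatial scale, which is why the statement omits any scaling parameter.

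The main technical ingredient is a refined Strichartz estimate of the form
\[
\|e^{it\Delta}f\|_{L_t^{2p}L_x^r} \lesssim \|e^{it\Delta}f\|_{L_{t,x}^\infty}^{\delta}\,\|f\|_{H^1}^{1-\delta}\qtq{for some} \delta=\delta(p,r)\in(0,1).
\]
To derive it, I would apply Lemma~\ref{L:RSE} in space to $e^{it\Delta}f(t,\cdot)$ at each time, raise to the $2p$ power, and integrate in $t$, reducing matters to a bound on $\int\sup_N\|P_N e^{it\Delta}f\|_{L_x^r}^{2p\theta}\,dt$. One then controls the inner supremum via the pointwise interpolation $\|P_N g\|_{L^r}\leq\|P_N g\|_{L^\infty}^{1-2/r}\|P_N g\|_{L^2}^{2/r}$, together with $\|P_N g\|_{L^\infty}\lesssim\|g\|_{L^\infty}$ and the $L^2$-conservation of $e^{it\Delta}$; the remaining $t$-integral is estimated by trading an integrable $L_t^4L_x^\infty$ factor against $\|f\|_{L^2}$ via the admissible Strichartz pair $(4,\infty)$. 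A direct accounting of exponents yields the claimed $\delta\in(0,1)$ (for instance, $\delta=(p-2)^2/(4p^2)$ in the range $2<p\leq 4$).

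Plugging the hypotheses into the refined Strichartz yields $\|e^{it\Delta}f_n\|_{L_{t,x}^\infty}\gtrsim A(\varepsilon/A)^{1/\delta}$. Selecting $(t_n,x_n)\in\R\times\R$ so that $|(e^{-it_n\Delta}f_n)(x_n)|$ approximates this supremum within a factor of $2$, the sequence $\{e^{-it_n\Delta}f_n(\cdot+x_n)\}$ is bounded in $H^1$; after passing to a subsequence, it converges weakly in $H^1$ to some $\phi\in H^1$. By Rellich--Kondrachov on bounded intervals combined with the embedding $H^1\hookrightarrow C^{1/2}(\R)$, the convergence is locally uniform, so $|\phi(0)|$ inherits the lower bound, and the $1d$ Sobolev embedding $H^1\hookrightarrow L^\infty$ then produces \eqref{E:ISE1}. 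The decoupling \eqref{E:ISE3} is immediate from
\[
\|f_n\|_{H^\lambda}^2-\|f_n-\phi_n\|_{H^\lambda}^2-\|\phi_n\|_{H^\lambda}^2 = 2\,\Re\bigl\langle e^{-it_n\Delta}f_n(\cdot+x_n)-\phi,\,\phi\bigr\rangle_{H^\lambda},
\]
since the cross term vanishes by unitarity of $e^{it\Delta}$ and spatial translation together with weak $H^1$-convergence.

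For the weighted decoupling \eqref{E:ISE4} I would further split a subsequence according to whether $t_n\equiv 0$ or $|t_n|\to\infty$. When $t_n\equiv 0$, Rellich--Kondrachov upgrades weak $H^1$-convergence to a.e.\ pointwise convergence, and the Br\'ezis--Lieb lemma applied with the bounded weight $a^{p+2}\in L^\infty$ gives the decoupling. When $|t_n|\to\infty$, the $1d$ dispersive estimate together with a density argument in $C_c^\infty\subset H^1$ forces $\|\phi_n\|_{L^{p+2}}\to 0$, hence $\|a\phi_n\|_{L^{p+2}}\to 0$ by $a\in L^\infty$; the surviving cross terms vanish by the pointwise bound \eqref{E:BPW1} combined with the uniform $H^1$-boundedness of $\{f_n\}$. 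The principal obstacle is the refined Strichartz estimate: the interpolation accounting is tight, and one must verify $\delta\in(0,1)$ across the regimes $2<p\leq 4$ and $p>4$ (which invoke different branches of Lemma~\ref{L:RSE}), with the bookkeeping becoming delicate as $p\to 2^+$, where $r(p)\to\infty$.
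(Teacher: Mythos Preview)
Your approach is correct and in some respects more direct than the paper's. The key difference lies in how the concentration point is identified and how the lower bound on $\|\phi\|_{H^1}$ is obtained.

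The paper first interpolates in time to find $t_n$ with $\|e^{it_n\Delta}f_n\|_{L_x^r}$ large, then applies the refined Sobolev embedding (Lemma~\ref{L:RSE}) to identify a Littlewood--Paley scale $N_n$ at which concentration occurs, shows $N_n$ is bounded above and below using the $H^1$ hypothesis, and finally pairs the weak limit with the convolution kernel of $P_M$ to extract a lower bound on $\|\phi\|_{L^2}$. Your route instead establishes a refined Strichartz estimate directly against $\|e^{it\Delta}f\|_{L_{t,x}^\infty}$---which, incidentally, follows already from the elementary interpolation $\|g\|_{L^r}\leq\|g\|_{L^\infty}^{1-2/r}\|g\|_{L^2}^{2/r}$ together with Strichartz at $(4,\infty)$, so Lemma~\ref{L:RSE} is not actually needed---selects $(t_n,x_n)$ witnessing a near-supremum, and then exploits the specifically one-dimensional embedding $H^1\hookrightarrow C^{1/2}$ together with Arzel\`a--Ascoli to pass the pointwise lower bound to the weak limit. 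This bypasses the frequency-scale bookkeeping entirely.

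The trade-off: your argument is shorter and avoids the refined Sobolev machinery, but it leans essentially on $H^1(\R)\hookrightarrow L^\infty$, which fails for $d\geq 2$. The paper's frequency-localized route is the standard one from the cited literature and is what one would need in higher dimensions (in particular for the $d=2$ problem the authors flag for future work). For the potential-energy decoupling \eqref{E:ISE4}, you supply more detail than the paper, which simply cites Br\'ezis--Lieb via \cite{FXC,V}; your case split on $t_n$ is valid, though the reduction to $t_n\equiv 0$ or $|t_n|\to\infty$ properly belongs to the iterative profile-decomposition step rather than to the single-bubble extraction. Minor point: your value $\delta=(p-2)^2/(4p^2)$ does not match the direct computation (one gets $\delta=(p-2)/(2p)$ for $2<p\leq 4$ via the elementary interpolation above), but this is only bookkeeping and does not affect the argument.
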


\begin{proof} Passing to a subsequence, we may assume
\[
\|f_n\|_{H^1}>\tfrac{A}{2}\qtq{and} \|e^{it\Delta}f_n\|_{L_t^{2p}L_x^r}>\tfrac{\varepsilon}{2}.
\]
Next, we observe that by H\"older's inequality and Strichartz estimates we have the inequality
\begin{align*}
\|e^{it\Delta}f\|_{L_t^{2p}L_x^{r}} &\lesssim \|e^{it\Delta}f\|_{L_t^\infty L_x^r}^{\theta_1}\|e^{it\Delta}f\|_{L_t^\frac{4r}{r-2}L_x^r}^{1-\theta_1}\lesssim \|e^{it\Delta}f\|_{L_t^\infty L_x^r}^{\theta_1}\|f\|_{H_x^1}^{1-\theta_1}
\end{align*}
with $\theta_1=\frac{p(r-2)-2r}{p(r-2)}$. Therefore, for all $n$ sufficiently large we may find times $t_n$ such that
\begin{equation}
A(\tfrac{\varepsilon}{A})^\frac{1}{\theta_1}\lesssim\|e^{it_n\Delta}f\|_{L_x^r}.
\end{equation}
Applying the Lemma~\ref{L:RSE}, we find some $\theta_2\in(0,1)$ and scales $N_n\in 2^\Z$ such that
\begin{equation}\label{E:ISEEqn1}
A(\tfrac{\varepsilon}{A})^\frac{1}{\theta_1\theta_2}\lesssim\|P_{N_n}(e^{it_n\Delta}f_n)\|_{L_x^r}.
\end{equation}
Finally, by interpolating between $L^2$ and $L^\infty$, we can obtain
\begin{equation}
A(\tfrac{\varepsilon}{A})^\frac{1}{\theta}\lesssim |[P_{N_n}e^{it_n\Delta}f_n](x_n)|.
\end{equation}
for some $x_n\in\R$ and some $\theta\in(0,1)$. 

We now note that because the sequence is bounded in $H^1$, the sequence $N_n$ is necessarily bounded above and below.  Indeed,  by \eqref{E:ISEEqn1} and Bernstein estimates we first estimate
\begin{align*}
A(\tfrac{\varepsilon}{A})^\frac{1}{\theta}\lesssim N_n^\frac{r-2}{2r}\|f_n\|_{L^2},\qtq{so that}(\tfrac{\varepsilon}{A})^\frac{2r}{\theta(r-2)}\lesssim N_n.
\end{align*}
Similarly,
\begin{align*}
A(\tfrac{\varepsilon}{A})^\frac{1}{\theta}\lesssim N_n^{-\frac{r+2}{2r}}\|\nabla f_n\|_{L^2},\qtq{so that}N_n\lesssim (\tfrac{A}{\varepsilon})^\frac{2r}{\theta(r+2)}
\end{align*}
As $N_n$ belongs to a discrete set, we can pass to a subsequence and assume $N_n\equiv M$ for some $M$.

We now define $g_n(x) = e^{it_n\Delta} f_n(x+x_n)$, which satisfies $\|g_n\|_{H^1}=\|f_n\|_{H^1}$.  Passing to a subsequence, we have that $g_n\rightharpoonup\phi$ weakly in $H^1$ for some $\phi$ (from which we can deduce \eqref{E:ISE2} and \eqref{E:ISE3}).  To obtain \eqref{E:ISE1} one can pair pair $\phi$ with the convolution kernel for $P_M$, which we may denote by $\psi$.  This yields
\[
M^{\frac12} \|\phi\|_{L^2} \gtrsim |\langle\phi,\psi\rangle| = \lim_{n\to\infty} |P_M[e^{it_n\Delta}(x_n)| \gtrsim A(\tfrac{\eps}{A})^{\frac{1}{\theta}},
\]
which (in light of the bounds on $M$) yields \eqref{E:ISE1}.


Finally, the potential energy decoupling follows from arguments as in \cite{FXC} (see also \cite{V}), relying on the refined Fatou Lemma of Br\'ezis and Lieb.  \end{proof}

Iterating the inverse Strichartz estimate leads to the profile decomposition.  We briefly sketch the ideas.

\begin{proof}[Sketch of the proof of Proposition~\ref{P:LPD}] We let $w_n^0 = f_n$.  If $\|e^{it\Delta} w_n^0\|_{L_t^{2p} L_x^r} \to 0$, we set $J^*=0$ and we are done.  Otherwise, we apply Lemma~\ref{L:ISE} to obtain the parameters $(t_n^1,x_n^1)$ and the profile $\varphi^1$.  We then let $w_n^1 = w_n^0 - e^{-it_n^1\Delta}\varphi^1(x-x_n^1)$.  If $\|e^{it\Delta} w_n^1\|_{L_t^{2p} L_x^r}\to 0$, then we set $J^*=1$ and stop.  Otherwise, we continue to apply Lemma~\ref{L:ISE}. Proceeding in this fashion leads to the construction of the profiles and remainders, which can then be shown to satisfy the mass/energy decoupling and the vanishing remainder.  Finally, one uses the weak convergence property to establish the asymptotic orthogonality of profiles.  In the case that $\{t_n^j\}_{n}$ is bounded or $\{x_n^j\}_{n}$ is bounded for some $j$. we can reduce to $t_n^j\equiv 0$ or $x_n^j\equiv 0$ by incorporating a translation into the definition of the profile.  For complete details, we again refer the reader to \cite{FXC,V}. 
\end{proof}

\section{Local well-posedness and stability}\label{S:LWP}

As described in the introduction, standard arguments (using Strichartz estimates and a contraction mapping argument to prove the existence of a solution to the Duhamel formula) yield local well-posedness for \eqref{nls} for $a$ satisfying $a,\nabla a\in L^\infty$.  The defocusing assumption and conservation of mass and energy then allow us to obtain global existence.  The scattering statement relies on the nonlinear estimate Lemma~\ref{L:StdNonlinEst}, and in particular this requires decay assumptions on $a$ in the range $2<p\leq 4$. 

\begin{proposition}[Well-posedness] Fix an admissible, nonnegative $a$ and $p\geq 2$. 

\begin{itemize}
\item[(i)] \textbf{Initial-value problem.} For any $u_0\in H^1(\R)$, there exists a unique global solution to \eqref{nls} with $u|_{t=0}=u_0$.
\item[(ii)] \textbf{Final-state problem.} For any $u_+\in H^1(\R)$, there exists a unique global solution $u$ to \eqref{nls} such that
\[
\lim_{t\to\infty} \|u(t)-e^{it\Delta}u_+\|_{H^1} = 0.
\] 
A similar statement holds backward in time. 
\end{itemize}

For both the initial-value problem and final-state problem, the solution is uniformly bounded in $H^1$.  Moreover, if
\[
\|u\|_{L_t^{2p} L_x^r(\R\times\R)}<\infty
\]
for some $r\in[p,\infty]$, then $u$ scatters in $H^1$ as $t\to\pm\infty$.
\end{proposition}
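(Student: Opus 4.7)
All three parts of the proposition follow from standard Strichartz/contraction-mapping arguments, once one has the nonlinear estimate in Lemma~\ref{L:StdNonlinEst} and the one-dimensional Sobolev embedding $H^1(\R)\hookrightarrow L^\infty(\R)$. The initial-value problem is subcritical in $H^1$ (because of $H^1\hookrightarrow L^\infty$), so everything at the local level is essentially free; the only real content is globalization via conservation laws, the construction of the wave operator, and the scattering criterion, which is a standard consequence of global Strichartz control.

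\textbf{Part (i): initial-value problem and global existence.} I apply a contraction argument to
\[
\Phi[u](t)=e^{it\Delta}u_0 - i\int_0^t e^{i(t-s)\Delta}\bigl(a|u|^p u\bigr)(s)\,ds
\]
on a small ball in $C([0,T];H^1)$. Using $H^1\hookrightarrow L^\infty$ together with $a,\nabla a\in L^\infty$, a direct pointwise-in-time computation gives
\[
\|a|u|^p u\|_{H^1}\lesssim (1+\|u\|_{H^1})^{p+1},
\]
and the difference estimate of Lemma~\ref{L:PW} furnishes Lipschitz continuity, so Strichartz estimates (or a direct energy estimate) yield a contraction for $T$ small depending only on $\|u_0\|_{H^1}$. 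To globalize I invoke conservation of mass and energy: since $a\geq 0$,
\[
\tfrac12\|\nabla u(t)\|_{L^2}^2\leq E(u(t))=E(u_0),\qquad \|u(t)\|_{L^2}=\|u_0\|_{L^2},
\]
so $\|u(t)\|_{H^1}$ is uniformly bounded, the blow-up alternative is excluded, and the solution is global with uniform $H^1$ bound.

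\textbf{Part (ii): final-state problem.} I construct a solution on $[T,\infty)$ for $T$ large and then glue to part (i). Consider
\[
\Psi[u](t)=e^{it\Delta}u_+ + i\int_t^\infty e^{i(t-s)\Delta}\bigl(a|u|^p u\bigr)(s)\,ds.
\]
By \eqref{E:REstimate2} and monotone convergence, $\|e^{it\Delta}u_+\|_{L_t^{2p}L_x^{r(p)}([T,\infty)\times\R)}\to 0$ as $T\to\infty$. Combining Strichartz, Lemma~\ref{L:StdNonlinEst}, \eqref{E:REstimate1}, and the assumption $a,\nabla a\in L_x^{\rho(p)}$, I can close a contraction for $\Psi$ in a small ball of an $H^1$-level Strichartz space on $[T,\infty)$. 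The fixed point automatically satisfies $\|u(t)-e^{it\Delta}u_+\|_{H^1}\to 0$ because the linear piece provides the asymptotic behavior and the Duhamel tail is small by Strichartz applied to the nonlinear estimate. Evolving $u(T)$ backward by part (i) yields the global solution.

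\textbf{Scattering criterion.} Assuming $\|u\|_{L_t^{2p}L_x^r(\R\times\R)}<\infty$, I partition $\R$ into finitely many intervals $I_k$ on which $\|u\|_{L_t^{2p}L_x^r(I_k\times\R)}<\eta$ for a small $\eta$. Applying Strichartz on the Duhamel formula, together with Lemma~\ref{L:StdNonlinEst}, \eqref{E:REstimate1}, and Lemma~\ref{L:PW}, one obtains a bootstrap inequality of the schematic form
\[
\|u\|_{\mathrm{Str}^1(I_k)}\lesssim \|u(t_k)\|_{H^1}+\eta^p\|u\|_{\mathrm{Str}^1(I_k)},
\]
where $\mathrm{Str}^1(I_k)$ denotes a finite collection of admissible Strichartz norms at the $H^1$ level. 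Absorbing and summing over $k$ gives $u\in\mathrm{Str}^1(\R\times\R)$. Rewriting Duhamel as $e^{-it\Delta}u(t)=u_0-i\int_0^t e^{-is\Delta}(a|u|^p u)(s)\,ds$, the same nonlinear estimate shows $\{e^{-it\Delta}u(t)\}$ is Cauchy in $H^1$ as $t\to\pm\infty$, producing the scattering states $u_\pm$.

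\textbf{Main difficulty.} The only non-routine ingredient is the nonlinear estimate in the low-power range $2<p\leq 4$: with $r(p)=4p/(p-2)$ one picks up an extra factor $\|u\|_{L_t^4L_x^\infty}$ that forces $a$ into $L_x^{\rho(p)}$ with $\rho(p)<\infty$, which is precisely the reason the hypothesis $a\in L^1\cap L^\infty$ is needed in this range. Once Lemma~\ref{L:StdNonlinEst} is available, all three assertions reduce to the standard Strichartz machinery outlined above.
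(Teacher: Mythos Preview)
Your proposal is correct and follows exactly the approach the paper indicates: the paper does not give a detailed proof of this proposition at all, instead simply stating that ``standard arguments (using Strichartz estimates and a contraction mapping argument\ldots) yield local well-posedness,'' that conservation of mass and energy give global existence, and that ``the scattering statement relies on the nonlinear estimate Lemma~\ref{L:StdNonlinEst},'' referring the reader to the textbook \cite{C}. Your sketch fills in precisely these standard steps, and in fact provides more detail than the paper does.
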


We record the following corollary (the standard small-data scattering result).  Note that because of the defocusing assumption $a\geq 0$, the $H^1$-norm of a solution is controlled by its mass and energy.

\begin{corollary} Let $u$ be a solution to \eqref{nls}. If $M(u)+E(u)$ is sufficiently small, then $u$ scatters in $H^1$ as $t\to\pm\infty$, with
\[
\|u\|_{L_t^{2p} L_x^{r(p)}(\R\times\R)} \lesssim \|u_0\|_{H^1}.
\]
\end{corollary}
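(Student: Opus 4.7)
The plan is to apply the standard small-data Strichartz bootstrap to the Duhamel formula \eqref{Duham} using the nonlinear estimate from Lemma~\ref{L:StdNonlinEst}. Once the quantitative bound $\|u\|_{L_t^{2p} L_x^{r(p)}} \lesssim \|u_0\|_{H^1}$ is in hand, scattering follows immediately from the preceding well-posedness proposition, which guarantees $H^1$-scattering whenever the scattering norm is finite.

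The first observation is that because $a \geq 0$, the definition of the energy in \eqref{MEdef} yields
\[
\|u(t)\|_{H^1}^2 \lesssim M(u(t))+E(u(t)) = M(u_0)+E(u_0),
\]
by conservation of mass and energy. Thus smallness of $M(u)+E(u)$ translates into smallness of $\|u\|_{H^1}$ along the entire orbit. To set up the bootstrap I would work with the Strichartz quantity
\[
Y(I) = \|u\|_{L_t^{2p} L_x^{r(p)}(I\times\R)} + \|u\|_{L_t^4 L_x^\infty(I\times\R)} + \|\nabla u\|_{L_t^4 L_x^\infty(I\times\R)}.
\]
Both pairs $(2p,\tfrac{2p}{p-2})$ (which dominates the first summand by \eqref{E:REstimate1}) and $(4,\infty)$ are Strichartz-admissible in one space dimension, so applying Strichartz to \eqref{Duham} gives
\[
Y(I) \lesssim \|u_0\|_{H^1} + \|a|u|^p u\|_{L_t^{4/3} L_x^1(I\times\R)} + \|\nabla(a|u|^p u)\|_{L_t^{4/3} L_x^1(I\times\R)}.
\]

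For the nonlinear term, writing $\nabla(a|u|^p u) = (\nabla a)|u|^p u + a\,\nabla(|u|^p u)$ and bounding the chain-rule factors pointwise by $|\nabla u|\,|u|^p$ reduces matters to two applications of Lemma~\ref{L:StdNonlinEst}, with $g=u$ and with $f\in\{u,\nabla u\}$. The hypotheses of Theorem~\ref{T} place $a$ and $\nabla a$ in $L^{\rho(p)}$ in every admissible regime of $p>2$ (interpolating $L^1\cap L^\infty$ in the range $2<p\leq 4$), so the result is
\[
\|a|u|^p u\|_{L_t^{4/3}L_x^1} + \|\nabla(a|u|^p u)\|_{L_t^{4/3}L_x^1} \lesssim \bigl(\|a\|_{L^\rho} + \|\nabla a\|_{L^\rho}\bigr) Y(I)^{p+1}.
\]

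Combining these yields $Y(I) \leq C_0 \|u_0\|_{H^1} + C_1 Y(I)^{p+1}$ on every interval $I\ni 0$ on which $u$ exists. A routine continuity argument on the intervals $I_T=[-T,T]$ — using the local well-posedness supplied by the proposition, the continuity of $T\mapsto Y(I_T)$, and the vanishing $Y(I_T)\to 0$ as $T\to 0^+$ — then upgrades the bound to $Y(\R) \leq 2C_0\|u_0\|_{H^1}$ provided $\|u_0\|_{H^1}$ is sufficiently small relative to the fixed constants $C_0,C_1$. The only real obstacle is organizational: correctly distributing the gradient across $a|u|^p u$ and verifying that the hypotheses of Theorem~\ref{T} do place $a,\nabla a$ in the relevant $L^{\rho(p)}$ space in each of the regimes $p>4$, $p=4$, and $2<p<4$. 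With those checks complete the bootstrap closes and the well-posedness proposition delivers scattering.
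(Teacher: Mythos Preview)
Your argument is correct and is precisely the standard small-data bootstrap the paper has in mind; the paper does not actually write out a proof of this corollary, simply recording it as ``the standard small-data scattering result'' following the well-posedness proposition. Your use of \eqref{E:REstimate1}, Lemma~\ref{L:StdNonlinEst}, and the continuity argument matches the intended approach, and the closing appeal to the well-posedness proposition for scattering is exactly what the paper sets up.
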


In the rest of the paper we will take $r=r(p)$ and $\rho=\rho(p)$ as defined in \eqref{def:rq} and \eqref{def:rho}.

We will also need a stability theory for \eqref{nls}.  The arguments are fairly standard and are similar in spirit to those used to obtain local well-posedness.  Thus we will only sketch the proofs, and we refer the reader to \cite{KV,V} for detailed exposition.  In what follows, we assume $a$ satisfies the assumptions of Theorem~\ref{T}. 

\begin{proposition}[Stability]\label{P:stability} Let $t_0\in I$, $u_0\in H^1$, and suppose $\tilde{u}:I\times\R\to \C$ is a solution to the equation
\[
(i\partial_t+\Delta)\tilde{u}=a(x)|\tilde{u}|^p\tilde{u}+e
\]
for some function $e$. Further assume
\[
\|\tilde{u}\|_{L_t^{2p}L_x^{r}(I\times\R)}=L<\infty.
\]
Then there exist $\varepsilon_1=\varepsilon_1(L)$ such that for all $0<\varepsilon\leq\varepsilon_1$, if
\[
\|u_0-\tilde{u}(t_0)\|_{H_x^1}+\|e\|_{L_t^\frac{4}{3}L_x^1(I\times\R)}+\|\nabla e\|_{L_t^\frac{4}{3}L_x^1(I\times\R)}<\varepsilon,
\]
then there exists a true solution $u:I\times\R\to\C$ to \eqref{nls} with $u(t_0)=u_0$ satisfying
\[
\|u-\tilde{u}\|_{L_t^{2p}L_x^{r}(I\times\R)}+\|(\nabla)^s(u-\tilde{u})\|_{L_t^\alpha L_x^\beta(I\times\R)}\lesssim_{L}\varepsilon,
\]
for any admissible pair $(\alpha,\beta)$ and $s\in\{0,1\}$.
\end{proposition}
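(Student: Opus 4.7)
My plan is the standard bootstrap-and-iterate approach to subcritical NLS stability, adapted to the inhomogeneous setting by packaging every appearance of $a$ through Lemma~\ref{L:StdNonlinEst}. First I would note that since $\|u_0-\tilde u(t_0)\|_{H^1}<\varepsilon$ and $u_0\in H^1$, we have $\tilde u(t_0)\in H^1$ with a bound depending only on $\|u_0\|_{H^1}$ and $\varepsilon$. Combining this with the hypothesis $\|\tilde u\|_{L_t^{2p}L_x^r(I\times\R)}=L$ and the approximate equation for $\tilde u$, a preliminary Duhamel bootstrap using Strichartz estimates and Lemma~\ref{L:StdNonlinEst} promotes this to Strichartz bounds for $\tilde u$ and $\nabla\tilde u$ at every admissible pair, with constants depending only on $L$, $\|u_0\|_{H^1}$, $\|\langle\nabla\rangle e\|_{L_t^{4/3}L_x^1}$, and the $L^\rho\cap L^\infty$ norms of $a$ and $\nabla a$. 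In particular, $\|\tilde u\|_{L_t^4L_x^\infty}+\|\nabla\tilde u\|_{L_t^4L_x^\infty}$ is controlled in terms of $L$.

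Next I would partition $I=\bigcup_{j=1}^J I_j$ into consecutive intervals on which
\[
\|\tilde u\|_{L_t^{2p}L_x^r(I_j\times\R)}+\|\tilde u\|_{L_t^4L_x^\infty(I_j\times\R)}+\|\nabla\tilde u\|_{L_t^4L_x^\infty(I_j\times\R)}\leq\eta,
\]
for a small parameter $\eta=\eta(\|u_0\|_{H^1})$ chosen below, with $J$ depending only on $L$ and $\eta$. On the first subinterval $I_1$, set $w=u-\tilde u$, which satisfies
\[
(i\partial_t+\Delta)w=a(x)\bigl[|\tilde u+w|^p(\tilde u+w)-|\tilde u|^p\tilde u\bigr]-e,
\]
with $\|w(t_0)\|_{H^1}<\varepsilon$. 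Expanding the nonlinear difference and its gradient via Lemma~\ref{L:PW} produces a finite sum of terms of the schematic form $a|g|^p f$ with $g\in\{\tilde u,w\}$ and $f\in\{\tilde u,w,\nabla\tilde u,\nabla w\}$, each of which Lemma~\ref{L:StdNonlinEst} places in $L_t^{4/3}L_x^1$. Together with Strichartz at the admissible pair $(4,\infty)$ and its dual, this yields an estimate of the schematic shape
\[
\|\langle\nabla\rangle w\|_{S(I_1)}\leq C\varepsilon+C\|a\|_{L^\rho}\bigl(\eta^p+\|\langle\nabla\rangle w\|_{S(I_1)}^p\bigr)\|\langle\nabla\rangle w\|_{S(I_1)},
\]
where $S(I_1)$ is a combination of $L_t^{2p}L_x^r$, $L_t^4L_x^\infty$, and the full Strichartz norm $L_t^\infty L_x^2\cap L_t^4L_x^\infty$. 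Choosing $\eta$ small and running a continuity argument in $|I_1|$ closes the estimate with $\|\langle\nabla\rangle w\|_{S(I_1)}\lesssim\varepsilon$, so in particular $\|w(t_1)\|_{H^1}\leq C'\varepsilon$ at the right endpoint $t_1$ of $I_1$.

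Finally I would iterate: on $I_2$ the same argument applies with the initial error replaced by $C'\varepsilon$ and the forcing $e$ restricted to $I_2$. After $J$ iterations the effective error has grown to $(C')^J\varepsilon$, so choosing $\varepsilon_1=\varepsilon_1(L)$ so that $(C')^J\varepsilon_1$ remains below the per-interval contraction threshold completes the proof. The Strichartz bounds for $u-\tilde u$ at arbitrary admissible pairs then follow by one more application of Strichartz to the Duhamel formula for $w$, exactly as in the contraction step.

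The step I expect to be the main obstacle is the careful bookkeeping in the mass-subcritical range $2<p\leq 4$, where the factor $|\nabla\tilde u|\,|w|\,|\tilde u|^{p-1}$ from \eqref{E:BPW2} must be split so that the two lowest-regularity factors $\nabla\tilde u$ and $w$ are placed in $L_t^4L_x^\infty$. This forces the preliminary Strichartz bootstrap on $\nabla\tilde u$ to be carried out first and is the only point where the decay hypothesis $a\in L^\rho$ with $\rho<\infty$ is genuinely invoked rather than just $a\in L^\infty$.
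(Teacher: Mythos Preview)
Your proposal is correct and follows essentially the same route as the paper: establish a short-time perturbation estimate via Strichartz, Lemma~\ref{L:PW}, and Lemma~\ref{L:StdNonlinEst}, then iterate over a partition of $I$ into subintervals where $\tilde u$ is small. The paper packages the per-interval step as a separate ``short-time perturbations'' proposition and only sketches the iteration, whereas you spell out the preliminary Strichartz bootstrap on $\tilde u$ (needed to partition according to $L_t^4L_x^\infty$-type norms rather than just $L_t^{2p}L_x^r$); this is a detail the paper leaves implicit. One small correction: for the term $a\,|\nabla\tilde u|\,|w|\,|\tilde u|^{p-1}$ you only need to place $\nabla\tilde u$ in $L_t^4L_x^\infty$ and $w$ in $L_t^{2p}L_x^r$ (with the remaining $|\tilde u|^{p-1}$ also in $L_t^{2p}L_x^r$), not both $\nabla\tilde u$ and $w$ in $L_t^4L_x^\infty$, so the obstacle you flag is milder than stated.
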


Proposition~\ref{P:stability} is obtained by iterating the following short-time result.

\begin{proposition}[Short-time perturbations]
Let $t_0\in I$, $u_0\in H^1$, and suppose $\tilde{u}:I\times\R\to\C$ is a solution to the equation
$$(i\partial_t+\Delta)\tilde{u}=a(x)|\tilde{u}|^p\tilde{u}+e,$$
for some function $e$. Then there exist $\varepsilon_0$, $\delta$ such that for all $0<\varepsilon\leq\varepsilon_0$ if
\[
\|\tilde{u}\|_{L_t^{2p}L_x^r(I\times\R)}+\|\tilde{u}\|_{L_t^4L_x^\infty(I\times\R)}+\|\nabla \tilde{u}\|_{L_t^4L_x^\infty(I\times\R)}<\delta
\]
and
\[
\|u_0-\tilde{u}(t_0)\|_{H_x^1}+\|e\|_{L_t^\frac{4}{3}L_x^1(I\times\R)}+\|\nabla e\|_{L_t^\frac{4}{3}L_x^1(I\times\R)}<\varepsilon,
\]
then there exists a solution $u:I\times\R\to \C$ to \eqref{nls} with $u(t_0)=u_0$ satisfying
\begin{equation}\label{E:StabError}
\|(\nabla)^s[a(x)\{|u|^pu-|\tilde{u}|^p\tilde{u}\}]\|_{L_t^\frac{4}{3}L_x^1(I\times\R)}\lesssim\varepsilon 
\end{equation}
for $s\in\{0,1\}$. Furthermore,
\[
\|u-\tilde{u}\|_{L_t^{2p}L_x^r(I\times\R)}+\|(\nabla)^s[u-\tilde{u}]\|_{L_t^\alpha L_x^\beta(I\times\R)}\lesssim \varepsilon,
\]
for all admissible $(\alpha,\beta)$ and $s\in\{0,1\}$.
\end{proposition}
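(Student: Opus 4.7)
The plan is a standard contraction-mapping argument on the Duhamel equation for the difference $w := u - \tilde u$. Subtracting the two equations, a true solution $u$ exists precisely when the equation
\[
(i\partial_t + \Delta) w = a(x)\bigl[|\tilde u + w|^p(\tilde u + w) - |\tilde u|^p \tilde u\bigr] - e, \qquad w(t_0) = u_0 - \tilde u(t_0),
\]
admits a (small) solution. I would seek $w$ as the fixed point of the map
\[
\Phi(w)(t) = e^{i(t-t_0)\Delta}w(t_0) - i\int_{t_0}^t e^{i(t-s)\Delta}\Bigl\{a\bigl[|\tilde u+w|^p(\tilde u+w) - |\tilde u|^p\tilde u\bigr] - e\Bigr\}\,ds
\]
on the ball $B = \{w : \|w\|_X \lesssim \varepsilon\}$, where $X$ is the intersection of $L_t^{2p}L_x^r$ with $L_t^\alpha L_x^\beta$ over all admissible pairs, together with the same norms applied to $\nabla w$.

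By Strichartz, $\|\Phi(w)\|_X$ is controlled by $\|w(t_0)\|_{H^1}+\|e\|_{L_t^{4/3}L_x^1} + \|\nabla e\|_{L_t^{4/3}L_x^1}<\varepsilon$ plus the $L_t^{4/3}L_x^1$ and $\nabla L_t^{4/3}L_x^1$ norms of the nonlinear difference. For the latter, I would invoke the pointwise estimates \eqref{E:BPW1} and \eqref{E:BPW2} (bounding the nonlinearity difference by $|w|^{p+1}+|w||\tilde u|^p$ and its gradient by the terms listed in \eqref{E:BPW2}, with $u$ there playing the role of $\tilde u$), and then Lemma~\ref{L:StdNonlinEst}. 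A mild generalization of that lemma — obtained by H\"older, using the crucial identity $p/r + 1/\rho = 1$ built into the definitions \eqref{def:rq}, \eqref{def:rho} — allows one to handle mixed products $|g_1|^{p_1}|g_2|^{p_2}$ with $p_1+p_2=p$, placing each $g_i$ separately in $L_t^{2p}L_x^r$. This produces bounds schematically of the form
\[
\bigl(\|a\|_{L_x^\rho}+\|\nabla a\|_{L_x^\rho}\bigr)\bigl(\|w\|_{L_t^4 L_x^\infty}+\|\nabla w\|_{L_t^4 L_x^\infty}+\|\nabla\tilde u\|_{L_t^4 L_x^\infty}\bigr)\bigl(\|w\|_{L_t^{2p}L_x^r}+\|\tilde u\|_{L_t^{2p}L_x^r}\bigr)^p,
\]
and a self-interaction piece of order $\|w\|_X^{p+1}$.

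The smallness hypothesis $\|\tilde u\|_{L_t^{2p}L_x^r}+\|\tilde u\|_{L_t^4 L_x^\infty}+\|\nabla\tilde u\|_{L_t^4 L_x^\infty}<\delta$ means every cross term carries a factor $\delta^p$ or better; the self-term $\|w\|_X^{p+1}$ is of order $\varepsilon^{p+1}$. Choosing $\delta$ sufficiently small (independently of $\varepsilon$) and then taking $\varepsilon\leq\varepsilon_0(\delta)$ makes $\Phi$ a contraction on $B$, and a nearly identical difference estimate gives Lipschitz continuity. The fixed point $w$ yields the desired solution $u = \tilde u + w$, and the spacetime bound in the conclusion follows directly from $w\in B$ together with the Strichartz/Sobolev embedding \eqref{E:REstimate1}. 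The error estimate \eqref{E:StabError} is then obtained by re-applying Lemma~\ref{L:StdNonlinEst} (both for $s=0$ and $s=1$) to the fixed-point $w$, using the same schematic bound displayed above.

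The main obstacle is the gradient case: the product rule produces terms involving $\nabla a$ as well as mixed expressions like $|\nabla\tilde u|\,|w|\,|\tilde u|^{p-1}$ coming from \eqref{E:BPW2}. Each such term must be arranged so that exactly one factor absorbs the $L_t^4L_x^\infty$ slot (and this factor must be something small — either $w$, $\nabla w$, or $\nabla\tilde u$), while the remaining $p$ factors are distributed across $L_t^{2p}L_x^r$. The identity $p/r+1/\rho=1$ is what makes this bookkeeping close, and the assumption $\nabla a\in L_x^\rho$ is what lets the $\nabla a$-terms be controlled by the same scheme as the terms involving $a$ itself.
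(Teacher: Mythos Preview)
Your proposal is correct and uses the same nonlinear estimates as the paper (Lemma~\ref{L:PW} together with Lemma~\ref{L:StdNonlinEst}, closing via the identity $p/r+1/\rho=1$). The only difference is packaging: the paper takes the solution $u$ from local well-posedness and runs a continuity/bootstrap argument on the quantity $A(t)=\sum_{s\in\{0,1\}}\|(\nabla)^s[a(|u|^pu-|\tilde u|^p\tilde u)]\|_{L_t^{4/3}L_x^1((t_0,t))}$, deriving the self-consistent bound $A(t)\lesssim (\varepsilon+A(t))^{p+1}+\delta^p(\varepsilon+A(t))+\delta(\varepsilon+A(t))^p$, whereas you construct $u$ directly by contraction mapping on $w$. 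Both routes are standard and equivalent here; the bootstrap version has the minor advantage that it avoids writing out a separate Lipschitz estimate for $\Phi$ at the gradient level.
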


\begin{proof}
By time translation we may assume without loss of generality that $t_0=0=\inf I$. Let $u:I\times\R\to \C$ be the solution to \eqref{nls} with $u|_{t=0}=u_0$. Now denote $f(z)=|z|^pz$ and define $w=u-\tilde{u}$ so that 
\[
(i\partial_t+\Delta)w=a(x)f(w+\tilde u)-a(x)f(\tilde{u})-e.
\]
We let
\[
A_s(t)=\|(\nabla)^s[a\,f(u)-a\,f(\tilde{u})]\|_{L_t^\frac{4}{3}L_x^1((0,t)\times\R)}\qtq{and} A(t)=A_0(t)+A_1(t).
\]

For any $4\leq q\leq\infty$, we may use Strichartz estimates to obtain
\begin{align*}
\|(\nabla)^sw\|_{L_t^qL_x^\frac{2q}{q-4}((0,t)\times\R)}&\lesssim \|u_0-\tilde{u}(0)\|_{H_x^s}+A_s(t)+\|(\nabla)^se\|_{L_t^\frac{4}{3}L_x^1((0,t)\times\R)},\\
\end{align*}
so that on $(0,t)$ we have
\begin{equation}\label{StabEqn1}
\|(\nabla)^sw\|_{L_t^qL_x^\frac{2q}{q-4}}\lesssim \varepsilon+A_s(t).
\end{equation}
Additionally, by \eqref{E:REstimate1} and \eqref{StabEqn1} we have that
\begin{equation}\label{StabEqn2}
\|w\|_{L_t^{2p}L_x^{r}}\lesssim \|w\|_{L_t^{2p}L_x^\frac{2p}{p-2}}+\|\nabla w\|_{L_t^{2p}L_x^\frac{2p}{p-2}}\lesssim \varepsilon +A(t).
\end{equation}


Now, by the nonlinear estimate Lemma~\ref{L:StdNonlinEst}, \eqref{E:BPW1}, \eqref{StabEqn1}, and \eqref{StabEqn2},
\begin{align*}
A_0(t)&=\|a[f(\tilde{u}+w)-f(\tilde{u})]\|_{L_t^\frac{4}{3}L_x^1((0,t)\times\R)}\\
&\lesssim\|a[|w|^{p+1}+w|\tilde{u}|^p]\|_{L_t^\frac{4}{3}L_x^1((0,t)\times\R)}\\
&\lesssim \|a\|_{\rho}\|w\|_{L_t^4L_x^\infty((0,t)\times\R)}\bigl(\|w\|_{L_t^{2p}L_x^r((0,t)\times\R)}^p+\|\tilde{u}\|_{L_t^{2p}L_x^r((0,t)\times\R)}^p\bigr)\\
&\lesssim_a(\varepsilon+A(t))^{p+1}+\delta^p(\varepsilon+A(t))
\end{align*}
Similarly, using the Lemma \ref{L:PW}, the nonlinear estimate Lemma~\ref{L:StdNonlinEst}, \eqref{StabEqn1}, and \eqref{StabEqn2} over $(0,t)\times\R$, we can obtain
\[
A_1(t)\lesssim (\varepsilon+A(t))^{p+1}+\delta^p(\varepsilon+A(t))+\delta(\varepsilon+A(t))^p.
\]
Combining these we obtain
\begin{align*}
A(t)&\lesssim_a (\varepsilon+A(t))^{p+1}+\delta^p(\varepsilon+A(t))+\delta(\varepsilon+A(t))^p,
\end{align*}
so that for $\delta$ and $\varepsilon$ sufficiently small we obtain $A(t)\lesssim \varepsilon$ for all $t\in I$, which completes the proof.
\end{proof}

As mentioned above, iterating the preceding result leads to Proposition~\ref{P:stability} (see e.g. \cite{KV,V}).

\section{The reduction to compact solutions}\label{S:Reduction} 

In this section we carry out one of the major steps to proving Theorem~\ref{T}.  In particular, we prove that if Theorem~\ref{T} fails, then we can find a solution to \eqref{nls} that has pre-compact orbit in $H^1$.  Throughout this section, we fix an inhomogeneity $a(x)$ satisfying the hypotheses of Theorem~\ref{T}.

We first define the increasing function 
\[
L:[0,\infty)\to[0,\infty]\qtq{by} L(E) = \sup\bigl\{ \|u\|_{L_t^{2p} L_x^{r}(\R\times\R)}\bigr\},
\]
where the supremum is taken over all solutions $u$ to \eqref{nls} with
\[
E(u)+M(u)<E. 
\]
By the small-data theory, we have
\[
L(E) \lesssim_E 1 \qtq{for} E<\eta_0,
\]
where $\eta_0=\eta_0(a)$ is the small-data threshold.  We let
\[
E_c = \sup\bigl\{E:L(E)<\infty\bigr\}. 
\]
To prove Theorem~\ref{T}, it suffices to prove that $E_c=\infty$.  We argue by contradiction and assume that $E_c<\infty$.  We will then prove the following: 
 
\begin{proposition}[Palais--Smale condition]\label{P:PS} Let $u_n:\R\times\R\to\C$ be a sequence of $H^1$ solutions to \eqref{nls} such that
\[
M(u_n)+E(u_n) \to E_c.
\]
Suppose that $\{t_n\}\subset\R$ are such that
\[
\lim_{n\to\infty} \|u_n\|_{L_t^{2p} L_x^r([t_n,\infty)\times\R)} = \lim_{n\to\infty} \|u_n\|_{L_t^{2p} L_x^r((-\infty,t_n]\times\R)} = \infty. 
\]
Then $\{u_n(t_n)\}$ converges along a subsequence in $H^1$. 
\end{proposition}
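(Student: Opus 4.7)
The strategy is a standard Palais--Smale argument via concentration-compactness: apply the linear profile decomposition to $u_n(t_n)$, lift the linear profiles to nonlinear profiles, and use the stability theory to derive a contradiction unless the decomposition collapses to a single profile with trivial parameters and vanishing remainder.

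After a time translation, set $t_n=0$. Since $a\ge 0$, the coercivity $\|u\|_{H^1}^2\lesssim M(u)+E(u)$ makes $\{u_n(0)\}$ bounded in $H^1$, so Proposition~\ref{P:LPD} yields profiles $\phi^j$, parameters $(t_n^j,x_n^j)$, and remainders $w_n^J$ with the stated decoupling and vanishing. The goal is to show that $J^*=1$, that $t_n^1\equiv 0$ and $x_n^1\equiv 0$, and that $\|w_n^{J^*}\|_{H^1}\to 0$, which together force $u_n(0)\to\phi^1$ strongly in $H^1$. To each profile we associate a nonlinear profile $v^j$, a global $H^1$ solution to \eqref{nls} with finite $L_t^{2p}L_x^r$-norm, by case analysis on $(t_n^j,x_n^j)$: if $t_n^j\equiv 0$ and $x_n^j\equiv 0$, take $v^j$ to be the solution of \eqref{nls} with $v^j(0)=\phi^j$; if $x_n^j\equiv 0$ and $|t_n^j|\to\infty$, use the final-state problem to produce $v^j$ scattering to $e^{it\Delta}\phi^j$; if $|x_n^j|\to\infty$, invoke Proposition~\ref{P:SFAP} to construct $v^j$ by comparison with the limiting equation (free Schr\"odinger when $a(x_n^j)\to 0$, or the constant-coefficient defocusing NLS with coefficient $a_\pm=\lim_{x\to\pm\infty}a(x)$ when $a_\pm>0$), exploiting the $p>4$ scattering theory for the pure-power model together with the stability result. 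If the decomposition is nontrivial in any respect---$J^*\ge 2$, some parameter diverges, or $\liminf_n\|w_n^{J^*}\|_{H^1}>0$---then the mass/energy decoupling combined with the nonnegativity of the potential term forces every profile's effective $M+E$ to lie strictly below $E_c-\eta$ for some $\eta>0$, so by the definition of $E_c$ each $v^j$ satisfies $\|v^j\|_{L_t^{2p}L_x^r}\le L(E_c-\eta)<\infty$, where $L$ is the function introduced before Proposition~\ref{P:PS}.

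Assemble the approximate solution
\[
\tilde u_n^J(t,x)=\sum_{j=1}^J v^j(t+t_n^j,\,x-x_n^j)+e^{it\Delta}w_n^J(x),
\]
and verify: (i) $\|\tilde u_n^J(0)-u_n(0)\|_{H^1}\to 0$, immediate from the decomposition; (ii) $\|\tilde u_n^J\|_{L_t^{2p}L_x^r}$ is bounded uniformly in $n$ and $J$, via asymptotic orthogonality of the parameters and the uniform bounds on the $v^j$; (iii) the error $(i\partial_t+\Delta)\tilde u_n^J-a|\tilde u_n^J|^p\tilde u_n^J$ and its gradient tend to zero in $L_t^{4/3}L_x^1$ as $n\to\infty$ and then $J\to J^*$. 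Applying Proposition~\ref{P:stability} to $u_n$ and $\tilde u_n^J$ then delivers a uniform bound on $\|u_n\|_{L_t^{2p}L_x^r(\R\times\R)}$, contradicting the hypothesis that the scattering norm blows up on both sides of $t_n=0$; hence the decomposition collapses and the desired strong convergence follows.

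The main obstacle is step (iii). The cross terms between distinct profiles are controlled by Lemma~\ref{L:PW2} together with asymptotic orthogonality, which sends products of distinct $v_n^j$ and $v_n^k$ to zero in the relevant spacetime norms. The remainder contributes an error whose principal piece is $a\,|e^{it\Delta}w_n^J|^p(e^{it\Delta}w_n^J)$ and its gradient; this is handled via Lemma~\ref{L:StdNonlinEst}, the vanishing of $\|e^{it\Delta}w_n^J\|_{L_t^{2p}L_x^r}$, and a local-smoothing trade furnished by Proposition~\ref{P:LS} for the gradient piece. The genuinely delicate case is the diagonal discrepancy for profiles with $|x_n^j|\to\infty$, where $v^j$ solves the \emph{limiting} equation rather than \eqref{nls}: the resulting error $[a(x)-a_\pm]\,|v_n^j|^p v_n^j$, and its gradient, must be absorbed using the $L^\rho$ decay of $a-a_\pm$ (respectively the decay of $\nabla a$ at infinity) paired with the spatial concentration of $v_n^j$ near $x_n^j\to\pm\infty$. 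This is precisely the difficulty addressed by Proposition~\ref{P:SFAP}, whose distinction between the regimes $p>4$ and $2<p\le 4$ is responsible for the differing decay hypotheses on $a$ imposed in Theorem~\ref{T}.
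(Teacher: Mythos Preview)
Your overall strategy matches the paper's, but there is a genuine gap and a point of confusion about how Proposition~\ref{P:SFAP} is used.

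\textbf{The gap.} Your blanket claim that ``if the decomposition is nontrivial in any respect\ldots then every profile's effective $M+E$ lies strictly below $E_c-\eta$'' is false in the case $J^*=1$, $w_n^1\to 0$ in $H^1$, $x_n^1\equiv 0$, and $|t_n^1|\to\infty$. In that scenario decoupling gives $M(\phi^1)+E(\phi_n^1)\to E_c$ exactly, so you cannot invoke $L(E_c-\eta)<\infty$ to bound the nonlinear profile $v^1$ globally. The paper handles this case separately: it takes the \emph{linear} evolution $\tilde u_n(t)=e^{it\Delta}u_n(0)$ as the approximate solution and observes that if $t_n^1\to+\infty$ then $\|e^{i(t+t_n^1)\Delta}\phi^1\|_{L_t^{2p}L_x^r([0,\infty))}\to 0$, so stability yields a bound on $\|u_n\|_{L_t^{2p}L_x^r([0,\infty))}$, contradicting the forward blow-up hypothesis. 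The case $|x_n^1|\to\infty$ with $J^*=1$ is likewise handled directly by Proposition~\ref{P:SFAP} (which requires no smallness of $M+E$), not by the sub-threshold argument.

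\textbf{The confusion about Proposition~\ref{P:SFAP}.} That proposition outputs, for each large $n$, a genuine solution $v_n^j$ to \eqref{nls} with the prescribed data; it does \emph{not} produce a single function $v^j$ solving a limiting equation. Consequently the paper's approximate solution is $u_n^J=\sum_j v_n^j + e^{it\Delta}w_n^J$ with each $v_n^j$ a true solution of \eqref{nls}, and there is no ``diagonal discrepancy'' term $[a(x)-a_\pm]|v_n^j|^p v_n^j$ in the Palais--Smale error analysis---that discrepancy is already absorbed inside the proof of Proposition~\ref{P:SFAP}. Your formula $\tilde u_n^J=\sum_j v^j(t+t_n^j,x-x_n^j)+e^{it\Delta}w_n^J$ with space-translated profiles only makes sense if the $v^j$ solve a translation-invariant equation, which is inconsistent with invoking Proposition~\ref{P:SFAP} as stated. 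Either route can be made to work, but you are mixing them. Finally, the term requiring local smoothing is not the pure remainder piece $a|e^{it\Delta}w_n^J|^p e^{it\Delta}w_n^J$ (that one is easy), but the cross term $a\,|\nabla e^{it\Delta}w_n^J|\,|v_n^j|^p$ with a fixed profile; this is where Proposition~\ref{P:LS} enters after approximating $v_n^j$ by a $C_c^\infty$ function.
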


With Proposition~\ref{P:PS} in hand, we can readily deduce the existence of a compact solution.

\begin{corollary}\label{C:AP} Suppose Theorem~\ref{T} fails. Then there exists an $H^1$-solution $v:\R\times\R\to\C$ such that $\{v(t):t\in\R\}$ is pre-compact in $H^1$. 
\end{corollary}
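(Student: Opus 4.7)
The plan is the standard extraction of a critical element via the Palais--Smale condition, followed by a second application of Palais--Smale along an arbitrary time sequence to upgrade to compactness of the full orbit.

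First I would note that the failure of Theorem~\ref{T} forces $E_c<\infty$. By the definition of $E_c$, for each $n$ we may pick an $H^1$ solution $u_n$ to \eqref{nls} with $M(u_n)+E(u_n)<E_c+\tfrac{1}{n}$ and scattering norm $S_n:=\|u_n\|_{L_t^{2p}L_x^r(\R\times\R)}\geq n$. Since $L$ is finite strictly below $E_c$, the mass-energies cannot cluster below $E_c$ and thus $M(u_n)+E(u_n)\to E_c$. Now choose $t_n\in\R$ splitting $\int_\R\|u_n(t)\|_{L^r}^{2p}\,dt$ symmetrically, so that both half-line scattering norms $\|u_n\|_{L_t^{2p}L_x^r((-\infty,t_n]\times\R)}$ and $\|u_n\|_{L_t^{2p}L_x^r([t_n,\infty)\times\R)}$ equal $2^{-1/(2p)}S_n\to\infty$.

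Proposition~\ref{P:PS} then produces, along a subsequence, an $H^1$ limit $\phi=\lim_n u_n(t_n)$. Let $v$ be the global $H^1$ solution of \eqref{nls} provided by the well-posedness proposition with $v(0)=\phi$. Continuity of $M$ and $E$ on $H^1$ yields $M(v)+E(v)=E_c$. Next I would verify that $v$ has infinite scattering norm in both time directions: otherwise, if for instance $\|v\|_{L_t^{2p}L_x^r([0,\infty)\times\R)}<\infty$, the stability result (Proposition~\ref{P:stability}) applied to the time-translated solutions $t\mapsto u_n(t+t_n)$, whose initial data converge to $v(0)=\phi$ in $H^1$, would give a uniform bound on $\|u_n\|_{L_t^{2p}L_x^r([t_n,\infty)\times\R)}$ for $n$ large, contradicting the construction. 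The backward direction is analogous.

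For the compactness of the orbit, take any sequence $\tau_n\in\R$ and apply Proposition~\ref{P:PS} with the constant sequence $u_n\equiv v$ and times $\tau_n$. Since $v$ has infinite scattering norm on every half-line $[a,\infty)$ and $(-\infty,a]$ (indeed, the scattering norm on any compact time interval is finite by Strichartz together with the uniform $H^1$ bound, so removing a bounded interval cannot turn an infinite norm finite), the hypothesis of Proposition~\ref{P:PS} is satisfied for every $\tau_n$. Hence $v(\tau_n)$ converges in $H^1$ along a subsequence, showing that $\{v(t):t\in\R\}$ is pre-compact. The bulk of the work is hidden in Proposition~\ref{P:PS} itself (which is taken as input here); in this reduction the only point requiring any care is the observation that the scattering norm of $v$ on any compact time interval is finite, which follows from Strichartz estimates together with conservation of mass and energy.
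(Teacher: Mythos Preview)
Your proposal is correct and follows essentially the same approach as the paper's own proof: extract a sequence with mass-energy converging to $E_c$ and diverging scattering norms, split the norm at $t_n$, apply Proposition~\ref{P:PS} to obtain a limit $\phi$, build $v$ from $\phi$, use stability to see $v$ has infinite half-line scattering norms, and then re-apply Proposition~\ref{P:PS} along an arbitrary time sequence (using finiteness of the scattering norm on compact intervals) to obtain pre-compactness. Your extraction of the $u_n$ with $M(u_n)+E(u_n)<E_c+\tfrac{1}{n}$ and the observation that the mass-energies cannot cluster strictly below $E_c$ is arguably a touch more careful than the paper's one-line ``$M(u_n)+E(u_n)\nearrow E_c$,'' but the content is the same.
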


\begin{proof}[Proof of Corollary~\ref{C:AP}] Suppose Theorem~\ref{T} fails, so that $E_c<\infty$. By definition, we may find a sequence of (scattering) solutions $u_n$ such that
\[
M(u_n)+E(u_n)\nearrow E_c \qtq{and} \lim_{n\to\infty} S_\R(u_n) = \infty. 
\]
We now choose $t_n\in\R$ such that
\[
S_{(-\infty,t_n]}(u_n) = S_{[t_n,\infty)}(u_n).
\]
Applying a time translation to each solution if necessary, we may assume $t_n\equiv 0$.  We may then apply Proposition~\ref{P:PS} to conclude that the sequence $\{u_n(0)\}$ converges strongly to some limit $\varphi \in H^1$ (up to a subsequence). We now let $v:\R\times\R\to\C$ be the solution to \eqref{nls} with $v|_{t=0}=\varphi$.  Observe that by stability (Proposition~\ref{P:stability}) we have
\[
M(v)+E(v) = E_c \qtq{and} S_{(-\infty,0]}(v) = S_{[0,\infty)}(v) = \infty. 
\]

We now claim that $\{v(t):t\in\R\}$ is pre-compact in $H^1$.  To this end,  we take an arbitrary sequence $\{s_n\}\subset\R$.  Noting that the scattering size of $v$ is finite on any finite time interval, we have that
\[
\lim_{n\to\infty} S_{[s_n,\infty)}(v) = \lim_{n\to\infty} S_{(-\infty,s_n]}(v) = \infty. 
\]
Thus we may apply Proposition~\ref{P:PS} to conclude that $\{v(s_n)\}$ converges along a subsequence in $H^1$. 
\end{proof}

It therefore remains to prove Proposition~\ref{P:PS}.  An essential ingredient in the proof is the following proposition, which allows us to construct scattering solutions to \eqref{nls} corresponding to initial data that is translated far from the origin.

\begin{proposition}[Scattering for far-away profiles]\label{P:SFAP} Let $\varphi\in H^1$.  Let $\{t_n\}$ satisfy $t_n\equiv 0$ or $t_n\to\pm\infty$, and let $\{x_n\}\subset\R$ satisfy $|x_n|\to\infty$.  For all sufficiently large $n$, there exists a global solution $v_n$ to \eqref{nls} with
\[
v_n(0,x) = \varphi_n(x) := e^{it_n\Delta}\varphi(x-x_n)
\]
that scatters in both time directions and obeys
\[
\|v_n\|_{L_t^{2p} L_x^{r}(\R\times\R)} +\|(\nabla)^sv_n\|_{L_t^\alpha L_x^\beta}\lesssim_{\|\varphi\|_{H^1}} 1,
\]
for all admissible $(\alpha,\beta)$ and $s\in\{0,1\}$. Furthermore, for any $\eps>0$ there exists $N$ and $\psi\in C_c^\infty(\R\times\R)$ such that for $n\geq N$,
\begin{align*}
&\|v_n - \psi(\cdot+t_n,\cdot-x_n)\|_{L_t^{2p} L_x^{r}(\R\times\R)} < \eps, \\
&\|(\nabla)^s[v_n-\psi(\cdot+t_n,\cdot-x_n)]\|_{L_t^\alpha L_x^{\beta}(\R\times\R)}<\eps,
\end{align*}
for all admissible $(\alpha,\beta)\not\in\{(\infty,2),(4,\infty)\}$ and $s\in\{0,1\}$.
\end{proposition}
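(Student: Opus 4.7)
The plan is to build $v_n$ as a stability-theoretic perturbation of a scattering solution to an appropriate limiting model, then apply Proposition~\ref{P:stability}.

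First, I identify the limit of $a$ along $x_n$. Since $|\nabla a(x)|\to 0$ at infinity, $a$ has limits $a_\pm\in[0,\infty)$ at $\pm\infty$; the $L^1$ hypothesis in the range $2<p\le 4$ forces $a_\pm=0$. Passing to a subsequence, $x_n$ has a definite sign, and I set $a_\infty$ to be the corresponding limit. If $a_\infty>0$ (which requires $p>4$), the limiting model is the defocusing constant-coefficient NLS $(i\partial_t+\Delta)w=a_\infty|w|^p w$, whose global $H^1$ scattering theory is available in this intercritical range; otherwise the limiting model is the free Schr\"odinger equation. In either case, let $w$ be the solution of this limiting model whose initial data is $\varphi$ when $t_n\equiv 0$, or whose asymptotic state is $\varphi$ when $t_n\to\pm\infty$. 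Then $w$ enjoys global Strichartz bounds (for $w$ and $\nabla w$) depending only on $\|\varphi\|_{H^1}$.

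Set $\tilde v_n(t,x):=w(t+t_n,x-x_n)$. Then $\tilde v_n(0)=\varphi_n$ when $t_n\equiv 0$, while $\|\tilde v_n(0)-\varphi_n\|_{H^1}=\|w(t_n)-e^{it_n\Delta}\varphi\|_{H^1}\to 0$ when $t_n\to\pm\infty$ by the scattering property of $w$. Moreover $\tilde v_n$ satisfies \eqref{nls} with forcing error
\begin{equation*}
e_n:=(i\partial_t+\Delta)\tilde v_n-a(x)|\tilde v_n|^p\tilde v_n=[a_\infty-a(x)]|\tilde v_n|^p\tilde v_n.
\end{equation*}
The main task is to prove $\|e_n\|_{L_t^{4/3}L_x^1}+\|\nabla e_n\|_{L_t^{4/3}L_x^1}\to 0$. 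After a change of variable $y=x-x_n$, the coefficient $a_\infty-a(y+x_n)$ tends to $0$ pointwise by the assumptions on $a$; differentiating and applying \eqref{E:BPW2} produces an additional contribution containing $\nabla a(y+x_n)\to 0$. Lemma~\ref{L:StdNonlinEst} bounds $\|e_n\|_{L_t^{4/3}L_x^1}$ by $\|a_\infty-a(\cdot+x_n)\|_{L^\rho_y}\,\|w\|_{L_t^{2p}L_x^r}^p\,\|w\|_{L_t^4 L_x^\infty}$, with an analogous bound for the gradient.

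The main obstacle is that $\|a_\infty-a(\cdot+x_n)\|_{L^\rho}$ does not vanish globally, only locally. The remedy is to first approximate $w$ and $\nabla w$ by smooth space-time compactly supported functions in the relevant Strichartz norms, so that the effective integration in $y$ is restricted to some compact set $\{|y|\le R\}$; on such a set $\|[a_\infty-a(\cdot+x_n)]\chi_{|y|\le R}\|_{L^\rho}\to 0$ as $n\to\infty$ by dominated convergence (using $a\in L^\rho$ as a dominating function), and a diagonal argument in the approximation parameter closes the bound. With these ingredients in hand, $L:=\|\tilde v_n\|_{L_t^{2p}L_x^r}\lesssim_{\|\varphi\|_{H^1}}1$ is uniform in $n$, so Proposition~\ref{P:stability} yields, for $n$ large, the true solution $v_n$ with $v_n(0)=\varphi_n$ satisfying the claimed uniform Strichartz bounds as well as $\|v_n-\tilde v_n\|_{L_t^{2p}L_x^r}+\|(\nabla)^s(v_n-\tilde v_n)\|_{L_t^\alpha L_x^\beta}\to 0$. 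For the final approximation statement, given $\eps>0$ I choose $\psi\in C_c^\infty(\R\times\R)$ within $\tfrac12\eps$ of $w$ in each of the relevant non-endpoint Strichartz norms (possible by density, with the endpoint exclusions in the statement reflecting the failure of $C_c^\infty$ to be dense in $L^\infty$-based norms); translation invariance of the norms and the triangle inequality then complete the proof.
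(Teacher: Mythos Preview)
Your approach is correct and genuinely different from the paper's. The paper builds the approximate solution by applying an explicit spatial cutoff $\chi_n$ (adapted to the scale $|x_n|$) and a frequency projection $P_n=P_{\le |x_n|^\theta}$ to the limiting solution, and in the case $a_\infty=0$ additionally splits into $|t|\le T$ and $|t|>T$ before taking $T\to\infty$. This hard localization forces the effective coefficient to live on $\{|x|>\tfrac14|x_n|\}$ from the outset, but generates commutator-type errors from derivatives hitting $\chi_n$; the frequency projection is there precisely so that Bernstein bounds control the resulting extra derivatives. You instead take the untruncated translated limiting solution $\tilde v_n=w(\cdot+t_n,\cdot-x_n)$, so the only error is $[a_\infty-a(x)]|\tilde v_n|^p\tilde v_n$, and you achieve the spatial localization \emph{softly} by approximating $w$ in Strichartz norms by a compactly supported $\psi$.

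Two small points to make your write-up airtight. First, the approximation must be taken in $L_t^{2p}L_x^r$ (not in $L_t^4L_x^\infty$, where $C_c^\infty$ is not dense); the H\"older split in Lemma~\ref{L:StdNonlinEst} then has to be rearranged so that a single factor of $w-\psi$ sits in $L_t^{2p}L_x^r$ while the remaining $p-1$ copies of $w$ (or $\psi$) and the copy of $w$ or $\nabla w$ in $L_t^4L_x^\infty$ are merely bounded. The identity $\tfrac{1}{\rho}+\tfrac{p}{r}=1$ makes this work. Second, for $\rho=\infty$ the dominated-convergence justification you sketch should be replaced by the direct uniform estimate $\sup_{|y|\le R}|a_\infty-a(y+x_n)|\to0$ (and likewise for $\nabla a$), which follows from $|y+x_n|\to\infty$ uniformly on $|y|\le R$; no integrable majorant is needed or available there. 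With these in place, your argument is cleaner than the paper's (no cutoffs, no projections, no $T$-parameter), while the paper's route has the advantage of making the localization completely explicit and avoiding any density argument.
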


\begin{proof}
Our strategy is to define approximate solutions to \eqref{nls} that obey space-time bounds and then apply stability (Proposition~\ref{P:stability}) to construct our scattering solution. The definition of the approximate solutions depends on the behavior of the inhomogeneity $a(x)$ as $x\to\pm\infty$. The repulsivity and nonnegativity assumptions on $a(x)$ guarantee that $a(x)\to a_\pm$ as $x\to\pm\infty$ for $a_\pm\geq 0$. When $2<p\leq 4$ we necessarily have $a_\pm=0$, however for $p>4$ this need not be the case.

We will first show in detail how to treat the case $a_\pm=0$. We define a sequence of smooth cutoff functions satisfying
$$\chi_n(x)=\begin{cases}
1 & |x+x_n|>\frac{1}{2}|x_n|\\
0 & |x+x_n|<\frac{1}{4}|x_n|
\end{cases}$$
that obey the symbol bounds $|\chi_n^{(k)}|\lesssim |x_n|^{-k}$ for all $k\geq 0$. We note that the $\chi_n\to 1$ pointwise as $n\to\infty$.  We next define the frequency projections $P_n=P_{\leq |x_n|^\theta}$ for some $0<\theta\ll1$. 

Given $T\geq 1$, we now define approximate solutions as follows: First,
\[
\tilde{v}_{n,T}(t,x)=\chi_n(x-x_n)e^{it\Delta}P_n\varphi(x-x_n)\qtq{for}|t|\leq T,
\]
while for $|t|>T$ we take
\[
\tilde{v}_{n,T}(t,x)=\begin{cases}
e^{i(t-T)\Delta}\tilde{v}_{n,T}(T) & t>T\\
e^{i(t+T)\Delta}\tilde{v}_{n,T}(-T) & t<T.
\end{cases}
\]

We will show that the $\tilde v_{n,T}$ satisfy
\begin{equation}\label{E:vn-intial-data}
\limsup_{T\to\infty}\limsup_{n\to\infty}\|\tilde{v}_{n,T}(t_n)-\varphi_n\|_{H_x^1}=0,
\end{equation}
\begin{equation}\label{E:vn-good-bounds}
\limsup_{T\to\infty}\limsup_{n\to\infty}\bigl\{|\tilde{v}_{n,T}\|_{L_t^{2p}L_x^r}+\|(\nabla)^s\tilde{v}_{n,T}\|_{L_t^\alpha L_x^\beta}\bigr\}\lesssim 1,
\end{equation}
for admissible $(\alpha,\beta)$ and $s\in \{0,1\}$, and finally that
\begin{equation}\label{E:vn-error-bounds}
\lim_{T\to\infty}\limsup_{n\to\infty} \|(\nabla)^s[(i\partial_t+\Delta)\tilde{v}_{n,T}-a(x)|\tilde{v}_{n,T}|^p\tilde{v}_{n,T}]\|_{L_t^\frac{4}{3}L_x^1}=0,
\end{equation}
for $s\in \{0,1\}$.
\begin{proof}[Proof of \eqref{E:vn-intial-data}]
First, suppose that $t_n\equiv0$. Then
\begin{equation*}
\|\tilde{v}_{n,T}(0)-\varphi_n\|_{H_x^1}=\|\chi_nP_n\varphi-\varphi\|\to0\qtq{as}n\to\infty
\end{equation*}
by the dominated convergence theorem. Next suppose $t_n\to\infty$ and fix $T>0$. Then for $n$ sufficiently large $|t_n|>T$ so that by the triangle inequality
\begin{align*}
\|\tilde{v}_{n,T}(t_n)-\varphi_n\|_{H_x^1}\leq \|(\chi_n-1)e^{iT\Delta}P_n\varphi\|_{H_x^1}+\|P_n\varphi-\varphi\|_{H_x^1},
\end{align*}
which again tends to $0$ as $n\to\infty$ by the dominated convergence theorem. The case that $t_n\to-\infty$ is similar.
\end{proof}

\begin{proof}[Proof of \eqref{E:vn-good-bounds}]
We will only show how to obtain the $L_t^{2p}L_x^r$-bounds, as the others are similar. On the time interval $[-T,T]$ we have by H\"older's inequality and \eqref{E:REstimate2} that
\[
\|\tilde{v}_{n,T}\|_{L_t^{2p}L_x^{r}([-T,T]\times\R)}\lesssim \|e^{it\Delta}P_n\varphi(x-x_n)\|_{L_t^{2p}L_x^{r}([-T,T]\times\R)}\lesssim \|\varphi\|_{H_x^1}.
\]
Similarly, by \eqref{E:REstimate2} for $t>T$ we have that
\begin{align*}
\|\tilde{v}_{n,T}\|_{L_t^{2p}L_x^{r}((T,\infty)\times\R)}&=\|e^{i(t-T)\Delta}\tilde{v}_{n,T}(T)\|_{L_{t}^{2p}L_x^{r}((T,\infty)\times\R)}\lesssim\|\tilde{v}_{n,T}(T)\|_{H_x^1},
\end{align*}
with a similar estimate for $t<-T$.  Thus it suffices to estimate $\tilde v_{n,T}$ in $L_t^\infty H_x^1$.  To this end, we estimate using H\"older's inequality and Strichartz estimates
\begin{align*}
\|\nabla\tilde{v}_{n,T}\|_{L_t^\infty L_x^2}&\lesssim \|\nabla \chi_n\|_{L_x^\infty}\|e^{it\Delta}P_n\varphi(x-x_n)\|_{L_t^\infty L_x^2([-T,T]\times\R)}\\
&\quad+\|\chi_n\|_{L_x^\infty}\|e^{it\Delta}P_n\nabla\varphi(x-x_n)\|_{L_t^\infty L_x^2([-T,T]\times\R)}\lesssim\|\varphi\|_{H_x^1}.
\end{align*}
As the estimate without the derivative is simpler, this completes the proof. \end{proof}

\begin{proof}[Proof of \eqref{E:vn-error-bounds}] We will focus on the more difficult case $s=1$, which will also demonstrate how to treat the case $s=0$. We set
\[
e_{n,T}=(i\partial_t+\Delta)\tilde{v}_{n,T}-a(x)|\tilde{v}_{n,T}|^p\tilde{v}_{n,T}.
\]
Observe that for $|t|>T$ we have
\[
e_{n,T}=-a(x)|\tilde{v}_{n,T}|^p\tilde{v}_{n,T}.
\]
Thus on $(T,\infty)\times\R$ we have by the nonlinear estimate Lemma~\ref{L:StdNonlinEst}
\begin{align*}
\|\nabla e_{n,T}\|_{L_t^\frac{4}{3}L_x^1}&\lesssim\|\nabla a\|_{L^\rho}\|\tilde{v}_{n,T}\|_{L_t^4L_x^\infty}\|\tilde{v}_{n,T}\|^p_{L_t^{2p}L_x^{r}}+\|a\|_{L^\rho}\|\nabla\tilde{v}_{n,T}\|_{L_t^{4}L_x^\infty}\|\tilde{v}_{n,T}\|^p_{L_t^{2p}L_x^{r}}.
\end{align*}
By \eqref{E:vn-good-bounds} and the dominated convergence theorem, we note that the $L_t^{2p} L_x^r$ norm of $\tilde v_{n,T}$  on $(T,\infty)\times\R$ tends to zero as $n,T\to\infty$, so that the quantity above does as well.  The case of $t<-T$ is similar.

It remains to estimate the error on the region $|t|<T$. In this region, we have
\begin{align}
e_{n,T}(t,x)&=\Delta\chi_n(x-x_n)\,e^{it\Delta}P_n\varphi(x-x_n)\label{E:Error1}\\
&\ +2\nabla\chi_n(x-x_n)\cdot\nabla e^{it\Delta}P_n\varphi(x-x_n)\label{E:Error2}\\
&\ -a(x)\chi_n(x-x_n)^{p+1}|e^{it\Delta}P_n\varphi(x-x_n)|^pe^{it\Delta}P_n\varphi(x-x_n).\label{E:Error3}
\end{align}
We now estimate the contributions of each of these terms. In what follows all space-time norms take place over $[-T,T]\times\R$ and space norms over $\R$. Starting with \eqref{E:Error1} we have by H\"older's inequliaty, the symbol bounds on the $\chi_n$, and Strichartz estimates
\begin{align*}
\|\nabla[\Delta&\chi_n(x-x_n)e^{it\Delta}P_n\varphi(x-x_n)]\|_{L_t^\frac{4}{3}L_x^1}\\
&\lesssim|T|^\frac{3}{4}\|(\nabla\Delta\chi_n(x-x_n)\|_{L_x^2}\|e^{it\Delta}P_n\varphi(x-x_n)\|_{L_t^\infty L_x^2}\\
&\quad+|T|^\frac{3}{4}\|\Delta\chi_n(x-x_n)\|_{L_x^2}\|e^{it\Delta}P_n\nabla\varphi(x-x_n)\|_{L_t^\infty L_x^2}\\
&\lesssim (|x_n|^{-\frac{5}{2}}+|x_n|^{-\frac{3}{2}})|T|^\frac{3}{4}\|\varphi\|_{H_x^1}\to 0\qtq{as}n\to\infty.
\end{align*}
Next we turn to \eqref{E:Error2}, where we similarly estimate by H\"older's ineuqliaty, symbol bounds, Strichartz, and Bernstein estimates
\begin{align*}
\|\nabla[\nabla(&\chi_n(x-x_n)\cdot\nabla e^{it\Delta}P_n\varphi(x-x_n)]\|_{L_t^\frac{4}{3}L_x^1}\\
&\lesssim\|\Delta(\chi_n(x-x_n))\nabla e^{it\Delta}P_n\varphi(x-x_n)\|_{L_t^\frac{4}{3}L_x^1}\\
&\qquad+\|\nabla \chi_n(x-x_n)\Delta e^{it\Delta}P_n\varphi(x-x_n)\|_{L_t^\frac{4}{3}L_x^1}\\
&\lesssim |T|^\frac{3}{4}(|x_n|^{-\frac{3}{2}}\|\varphi\|_{H_x^1}+|x_n|^{-\frac{1}{2}}\|P_n\Delta\varphi(x-x_n)\|_{L_x^2})\\
&\lesssim|T|^\frac{3}{4}(|x_n|^{-\frac{3}{2}}+|x_n|^{-\frac{1}{2}+\theta})\|\varphi\|_{H_x^1}\to0\qtq{as}n\to\infty.
\end{align*}
Finally, this leaves \eqref{E:Error3}, which we estimate via the standard nonlinear estimate Lemma~\ref{L:StdNonlinEst} and Strichartz estimates: 
\begin{align*}
\|\nabla&[a(x)\chi_n(x-x_n)^{p+1}|e^{it\Delta}P_n\varphi(x-x_n)|^pe^{it\Delta}P_n\varphi(x-x_n)]\|_{L_t^\frac{4}{3}L_x^1}\\
&\lesssim \|\nabla a\|_{L_x^\rho(|x|>\frac{1}{4}|x_n|)}\|e^{it\Delta}P_n\varphi\|_{L_t^4L_x^\infty}\|e^{it\Delta}P_n\varphi\|_{L_t^{2p}L_x^r}^p\\
&\quad+\|a\|_{L_x^\rho(|x|>\frac{1}{4}|x_n|)}\|e^{it\Delta}\nabla P_n\varphi\|_{L_t^4L_x^\infty}\|e^{it\Delta}P_n\varphi\|_{L_t^{2p}L_x^r}^p\\
&\lesssim \Bigl(\|\nabla a\|_{L_x^\rho(|x|>\frac14|x_n|)}+\|a\|_{L_x^\rho(|x|>\frac14|x_n|)}\Bigr)\|\varphi\|_{H_x^1}^{p+1}.
\end{align*}
We now observe that this quantity tends to zero using the dominated convergence theorem and the fact that $a_\pm=0$.\end{proof}

With \eqref{E:vn-intial-data}--\eqref{E:vn-error-bounds} in hand, we see that $\tilde{v}_{n,T}$ satisfy the assumptions of Proposition~\eqref{P:stability} for sufficiently large $n$ and $T$.  Thus, after applying a time translation, for $n$ sufficiently large there exist solutions $v_n$ to \eqref{nls} with $v_n(0)=\varphi_n(x)$ satisfying the desired space-time bounds. 

It remains to show the approximation by $C_c^\infty$ functions. Again, we only discuss the case of approximation in $L_t^{2p}L_x^r$ as the others are similar. By construction we have that
\[
\lim_{T\to\infty}\limsup_{n\to\infty}\|v_n(\cdot-t_n)-\tilde{v}_{n,T}(\cdot)\|_{L_t^{2p}L_x^{r}(\R\times\R)}=0.
\]
Thus, choosing $\psi\in C_c^\infty(\R\times\R)$ satisfying
\[
\|e^{it\Delta}\varphi-\psi\|_{L_t^{2p}L_x^{r}(\R\times\R)}<\varepsilon,
\]
it now suffices to show that
\begin{equation}\label{E:vn-approx}
\|\tilde{v}_{n,T}-e^{it\Delta}\varphi(x-x_n)\|_{L_t^{2p}L_x^{r}(\R\times\R)}<\varepsilon,
\end{equation}
for sufficiently large $n$ and $T$.  For $|t|<T$ we have
\begin{align*}
\|\tilde{v}_{n,T}&-e^{it\Delta}\varphi(x-x_n)\|_{L_t^{2p}L_x^{r}((-T,T)\times\R)}\\
&\lesssim\|\chi_ne^{it\Delta}P_n\varphi-e^{it\Delta}\varphi\|_{L_t^{2p}L_x^{r}((-T,T)\times\R)}\\
&\lesssim\|(\chi_n-1)e^{it\Delta}P_n\varphi\|_{L_t^{2p}L_x^{r}((-T,T)\times\R)}+\|P_n\varphi-\varphi\|_{H_x^1}=o(1)\qtq{as}n\to\infty.
\end{align*}
For $t>T$ we have by \eqref{E:REstimate2}
\begin{align*}
\|\tilde{v}_{n,T}&-e^{it\Delta}\varphi(x-x_n)\|_{L_t^{2p}L_x^{r}((T,\infty)\times\R)}\\
&\lesssim \|\chi_ne^{iT\Delta}P_n\varphi-e^{iT\Delta}\varphi\|_{H^1}\\
&\lesssim \|(\chi_n-1)e^{iT\Delta}P_n\varphi\|_{H^1}+\|P_n\varphi-\varphi\|_{H^1}=o(1)\qtq{as}n\to\infty.
\end{align*}
The case $t<-T$ is similar. Thus we obtain \eqref{E:vn-approx}, and complete the proof of Proposition~\ref{P:SFAP} in the case that $a_\pm=0$.

We now sketch the details of the case where $a_\pm>0$. Without loss of generality, assume $x_n\to\infty$.  For $a_+=0$, we used the linear Schr\"odinger equation as an approximate model for \eqref{nls}.  In the case $a_+>0$, we instead use the standard power-type NLS 
\begin{equation}\label{nlspow}
(i\partial_t+\Delta)u=a_+|u|^pu.
\end{equation}

We first define smooth cutoff functions
\[
\phi_n(x)=\begin{cases}
1 & x+x_n>\frac{1}{2}x_n\\
0 & x+x_n<\frac{1}{4}x_n
\end{cases}
\]
obeying the same symbol bounds $|\phi_n^{(k)}|\lesssim |x_n|^{-k}$ for all $k\geq 0$. These functions are similar to the $\chi_n$, but only cutoff to one side, reflecting the fact that the $x_n\to\infty$. 

If $t_n\equiv0$ we define $\tilde{w}_n$ as the solution to \eqref{nlspow} with initial data $\tilde{w}_n(0)=P_n\varphi(x-x_n)$. If instead $t_n\to \infty$ (resp. $t_n\to-\infty$) we define $\tilde{w}_n$ as the solution to \eqref{nlspow} that scatters to $P_n\varphi(x-x_n)$ as $t\to\infty$ (resp. $t\to-\infty$). By persistence of regularity  for \eqref{nlspow} and Bernstein estimates, one has that
\begin{equation}\label{E:persist}
\|\nabla^s\tilde{w}_n\|_{L_t^\infty L_x^2}\lesssim \|\nabla^s\tilde{w}_n(0)\|_{L_x^2}\lesssim |x|^{(s-1)\theta}\|\varphi\|_{H^1}.
\end{equation}

We now define our approximate solutions 
\[
\tilde{v}_{n,T}(t,x)=\phi_n(x-x_n)\tilde{w}_n(t,x)\qtq{for}|t|<T,
\]
and for $|t|>T$ we define $\tilde{v}_{n,T}$ via the free Schr\"odinger evolution as above. Establishing \eqref{E:vn-intial-data} in the case that $t_n\equiv0$ is identical to the case above. In the case that $t_n\to\infty$ we estimate via the triangle inequality: for $t_n>T$,
\begin{align*}
\|\tilde{v}_{n,T}&(t_n)-e^{it_n\Delta}\varphi(x-x_n)\|_{H_x^1}\\
&\lesssim \|(\phi_n-1)\tilde{w}_{n}(T,\cdot+x_n)\|_{H_x^1}+\|\tilde{w}_{n}(T,\cdot+x_n)-e^{iT\Delta}\varphi\|_{H_x^1}.
\end{align*}
The first term above vanishes as $n\to\infty$ by the dominated convergence theorem, while the second vanishes due to the fact that $\tilde{w}_{n}(\cdot+x_n)$ scatters to $\varphi$ in $H^1$.

The scattering results of \cite{N,CHVZ} discussed in the introduction yield \eqref{E:vn-good-bounds}, so that it only remains to prove \eqref{E:vn-error-bounds}. 

The proof for $|t|>T$ follows as above, as $\tilde v_{n,T}$ are linear Schr\"odinger solutions, so that the proof follows as in the case $a_\pm=0$ above.

For $|t|<T$, we instead have
\begin{align}
e_{n,T}&=\Delta \phi_n(x-x_n)\,\tilde{w}_{n}+\nabla\phi_n\cdot\nabla\tilde{w}_{n}\label{E:vn-error5}\\
&\quad+a_+|\tilde{w}_{n}|^p\tilde{v}_{n,T}-a(x)|\tilde{v}_{n,T}|^p\tilde{v}_{n,T}\label{E:vn-error6}.
\end{align}

We estimate each of these terms separately on the space-time region $[-T,T]\times\R$. Starting with the first term in \eqref{E:vn-error5}, we have by H\"older's inequality and the symbol bounds on the $\phi_n$ that
\begin{align*}
\|\nabla[\Delta\phi_n(x-x_n)\tilde{w}_n]\|_{L_t^\frac{4}{3}L_x^1}
&\lesssim |T|^\frac{3}{4}\|\nabla\Delta\phi_n\|_{L_x^2}\|\tilde{w}_{n,T}\|_{L_t^\infty L_x^2}\\
&\quad+|T|^\frac{3}{4}\|\Delta\phi_n\|_{L_x^2}\|\nabla\tilde{w}_{n,T}\|_{L_t^\infty L_x^2}\\
&\lesssim |T|^\frac{3}{4}(x_n^{-\frac{5}{2}}+x_n^{-\frac{3}{2}})\|\varphi\|_{H_x^1}\to 0\qtq{as}n\to\infty.
\end{align*}
For the second term in \eqref{E:vn-error5} we estimate again via H\"older's inequality and \eqref{E:persist}:
\begin{align*}
\|\nabla[\nabla\phi_n\nabla\tilde{w}_n]\|_{L_t^\frac{4}{3}L_x^1}&\lesssim |T|^\frac{3}{4}\|\Delta\phi_n\|_{L_x^2}\|\nabla\tilde{w}_n\|_{L_t^\infty L_x^2}+|T|^\frac{3}{4}\|\nabla \phi_n\|_{L_x^2}\|\Delta\tilde{w}_n\|_{L_t^\infty L_x^2}\\
&\lesssim |T|^\frac{3}{4}(x_n^{-\frac{3}{2}}+x_n^{-\frac{1}{2}+\theta})\|\varphi\|_{H_x^1} \to 0 \qtq{as}n\to\infty.
\end{align*}
Finally, for \eqref{E:vn-error6} we estimate via the standard nonlinear estimate Lemma~\ref{L:StdNonlinEst}
\begin{align*}
\|\nabla&[a_+|\tilde{w}_{n}|^p\tilde{v}_{n,T}-a(x)|\tilde{v}_{n,T}|^p\tilde{v}_{n,T}]\|_{L_t^\frac{4}{3}L_x^1}\\
&\lesssim\|\nabla a\|_{L_x^\infty(x>\frac{1}{4}x_n)}\|\tilde{w}_{n}\|_{L_t^4L_x^\infty}\|\tilde{w}_{n}\|_{L_t^{2p}L_x^r}^p\\
&\quad+\|a_+-a(x)\|_{L_x^\infty(x>\frac{1}{4}x_n)}\|\tilde{w}_n\|_{L_t^4L_x^\infty}\|\tilde{w}_n\|_{L_t^{2p}L_x^r}^p.
\end{align*}
Using \eqref{E:vn-good-bounds} and the act that $a(x)\to a_+$ as $x\to\infty$, we see that this quantity tends to zero.  As the approximation by $C_c^\infty$-functions follows as in the case $a_+=0$, we finally complete the proof of Proposition~\ref{P:SFAP}. \end{proof}


We turn to the proof of Proposition~\ref{P:PS}. 

\begin{proof}[Proof of Proposition~\ref{P:PS}] Let $\{u_n\}$ and $\{t_n\}$ be as in the statement of Proposition~\ref{P:PS}.  Applying a time translation, we may assume $t_n\equiv 0$.  Passing to a subsequence, we apply the linear profile decomposition (Proposition~\ref{P:LPD}) to $\{u_n(0)\}$ and write
\[
u_n(0,x) = \sum_{j=1}^J e^{it_n^j\Delta}\varphi^j(x-x_n^j) + w_n^J(x)
\]
for any $1\leq J\leq J^*$, with all of the properties guaranteed by Proposition~\ref{P:LPD}. To prove Proposition~\ref{P:PS}, we need to show that there is exactly one profile $\varphi^1$, with $t_n^1\equiv 0$, $x_n^1 \equiv 0$, and $w_n^1\to 0$ in $H^1$. 

Throughout the proof, we will make use of the notation
\[
\varphi_n^j(x) = e^{it_n^j\Delta}\varphi^j(x-x_n^j). 
\]

We first note that $J^*=0$ is not possible, for in this case we can apply the stability theory with the approximate solutions $\tilde u_n = e^{it\Delta}u_n(0)$ to derive uniform bounds on the $L_t^{2p} L_x^r$-norms of the $u_n$, contradicting the assumptions of Proposition~\ref{P:PS}.

We next rule out the case of multiple profiles (i.e. $J^*>1$).  To this end, we will prove 
\begin{equation}\label{one-profile}
\sup_j\limsup_{n\to\infty}M(\varphi^j)+E(\varphi_n^j)=E_c,
\end{equation}
which (using mass and energy decoupling) guarantees that $J^*=1$, as well as $w_n^1 \to 0$ in $H^1$.

Suppose instead that 
\[
\sup_j\limsup_{n\to\infty}M(\varphi^j)+E(\varphi_n^j)\leq E_c-3\delta
\]
for some $\delta>0$.  In this case, we will also derive uniform bounds on the scattering size of the $u_n$ for large $n$, once again contradicting the assumptions of Proposition~\ref{P:PS}.

The first step is to construct nonlinear solutions corresponding to each profile: 

If $(t_n^j,x_n^j)\equiv0$, we define $v^j$ to be the solution to \eqref{nls} with $v^j|_{t=0}=\varphi^j$. In this case,
\[
M(v^j)+E(v^j)=M(\varphi^j)+E(\varphi^j) \leq E_c-2\delta
\]
so that $v^j$ has finite global $L_t^{2p}L_x^r$-norm (and scatters in both time directions).

If instead $x_n^j\equiv 0$ and $t_n^j\to\pm\infty$, we define $v^j$ to be the solution to \eqref{nls} that scatters to $\varphi^j$ in $H^1$ as $t\to\pm\infty$.  In this case, we can also argue that
\[
M(v^j)+E(v^j) < E_c-\delta,
\]
so that $v^j$ has finite global $L_t^{2p} L_x^r$-norm  in this case as well.  In fact, scattering to $\varphi^j$ forces $M(v^j)=M(\varphi^j)$, so that it suffices to show that $E(v^j)\leq E(\varphi_n^j)+\delta$ for $n$ large.  To this end, we first note that by the dispersive estimate,
\[
E(e^{it\Delta}f) = \tfrac12 \|\nabla f\|_{L^2}^2 + o(1) \qtq{as} t\to\pm\infty 
\]
for $f\in H^1$.  Thus, by conservation of energy, $H^1$ scattering, and the assumption $t_n^j\to\pm\infty$, we have
\begin{align*}
E(v^j)  = E(e^{it\Delta}\varphi^j) + o(1)  &= \tfrac12\|\nabla\varphi\|_{L_x^2}^2 + o(1)   \\
&= \tfrac12\|\nabla\varphi_n^j\|_{L_x^2}^2 + o(1) = E(\varphi_n^j) + o(1)
\end{align*}
as $t\to\pm\infty$. This implies the desired bound. 

In either of these first two cases, we define
\[
v_n^j(t,x) = v^j(t+t_n^j,x),
\]
which yields a scattering solution to \eqref{nls}. 

Finally, if $|x_n^j|\to\infty$, we appeal to Proposition~\ref{P:SFAP} to construct the scattering nonlinear solution $v_n^j$ satisfying $v_n^j(0,x)=\varphi_n^j(x)$. 

We next define approximate solutions to \eqref{nls} via 
\[
u_n^J = \sum_{j=1}^J v_n^j + e^{it\Delta}w_n^J.
\]
By construction, we have that
\begin{equation}\label{PS-data}
\lim_{n\to\infty} \|u_n^J(0)-u_n(0)\|_{H^1} = 0 
\end{equation}
each $J$.  In addition, we will prove the following:
\begin{equation}\label{unJ-good-bounds}
\limsup_{J\to J^*}\limsup_{n\to\infty} [\|u_n^J \|_{L_t^{2p} L_x^r(\R\times\R)}+\|u_n^J\|_{L_t^\alpha L_x^\beta}+\|\nabla u_n^J\|_{L_t^\alpha L_x^\beta}]\lesssim 1
\end{equation}
for admissible $(\alpha,\beta)\not\in\{(\infty,2),(4,\infty)\}$ and
\begin{equation}\label{unJ-error-bounds}
\limsup_{J\to J^*}\limsup_{n\to\infty} \| \nabla^s[(i\partial_t + \Delta) u_n^J + a(x)|u_n^J|^p u_n^J]\|_{L_t^{\frac43} L_x^1} = 0
\end{equation}
for $s\in\{0,1\}$. By stability (Proposition~\ref{P:stability}), \eqref{PS-data}--\eqref{unJ-error-bounds} imply global $L_t^{2p} L_x^r$-bounds for the $u_n$ for all large $n$, contradicting the assumptions of Proposition~\ref{P:PS} (and hence proving \eqref{one-profile}). 

Before proving \eqref{unJ-good-bounds} and \eqref{unJ-error-bounds}, we first discuss our strategy and introduce some notation. The key to establishing these estimates to is combine the good bounds obeyed by the solutions $v_n^j$ together with the orthogonality of the profiles (a consequence of the profile decomposition) and the vanishing of the remainders $w_n^J$. To utilize the orthogonality, we will need to approximate by compactly supported functions of spacetime. Therefore, we must avoid endpoint Strichartz norms in the estimates, which necessitates some interpolation in what follows (as our usual nonlinear estimate makes use of the $L_t^4 L_x^\infty$-norm). As long as we restrict to powers $p>2$, however, there is always room to interpolate.

We define
\[
Q(p):=\begin{cases}
p & p>4\\
\frac{8p}{p+2} & 2< p\leq 4.
\end{cases}
\]
The relevance of this exponent is that the space $L_t^{Q(p)}L_x^\infty$ has the scaling of $\dot{H}^{s(p)}$ (cf. \eqref{sp}). In particular, if $Q>Q(p)$ and $R$ is such that $\frac{2}{Q}+\frac{1}{R}=\frac{1}{2}-s(p)$, then $R<\infty$ and by Sobolev embedding we have
\begin{equation}\label{E:InterpIdea}
\|f\|_{L_t^QL_x^R}\lesssim \||\nabla|^{s(p)}f\|_{L_t^QL_x^\frac{2Q}{Q-4}}\lesssim \|f\|_{L_t^QL_x^\frac{2Q}{Q-4}}+\|\nabla f\|_{L_t^QL_x^\frac{2Q}{Q-4}}.
\end{equation}
Thus if we control $\|(\nabla)^sf\|_{Q,\frac{2Q}{Q-4}}$ for $s\in\{0,1\}$, we also control $\|f\|_{L_t^QL_x^R}$.

We now record the following modified nonlinear estimate, which follows immediately from H\"older's inequality.
\begin{lemma}\label{L:InterpEstV2}
Let $\theta\in (0,1)$ satisfy
\[
\theta>\begin{cases}
\frac{2}{p}-1 & p>4\\
\frac{5p-2-p^2}{2p}& 2<p\leq 4
\end{cases}\qtq{and define}(Q,R)=(\tfrac{4p(p-1)}{(3-\theta)p-2},\tfrac{2r(p-1)}{2(p-1)-r(1-\theta)}).
\]
Then
\begin{align*}
\|afgh^{p-1}\|_{L_t^\frac{4}{3}L_x^1} & \lesssim \|a\|_{L_x^\rho}\|fg\|_{L_t^\frac{4p}{\theta p+2}L_x^\frac{2r}{r(1-\theta)+2}}\|h\|^{p-1}_{L_t^Q L_x^R} \\
&\lesssim \|a\|_{L_x^\rho}\|f\|_{L_t^\frac{4}{\theta}L_x^\frac{2}{1-\theta}}\|g\|_{L_t^{2p}L_x^r}\|h\|_{L_t^Q L_x^R}^{p-1}.
\end{align*}
The pair $(\frac{4}{\theta},\frac{2}{1-\theta})$ is admissible and the pair $(Q,R)$ satisfies 
\[
Q(p)<Q<2p\qtq{and}\tfrac{2}{Q}+\tfrac{1}{R}=\tfrac{1}{2}-s(p).
\]
\end{lemma}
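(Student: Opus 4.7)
The plan is that this lemma is essentially exponent bookkeeping: the first inequality comes from one application of H\"older's inequality in space and one in time, the second refinement from one more application of H\"older to split $fg$, and the assertions about $(4/\theta,2/(1-\theta))$ and $(Q,R)$ reduce to direct arithmetic checks using the definitions of $r(p)$, $\rho(p)$, $s(p)$ and $Q(p)$.

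For the first bound, I would apply spatial H\"older to $a\cdot(fg)\cdot h^{p-1}$ in $L_x^1$ with exponents $\rho$, $\tfrac{2r}{r(1-\theta)+2}$, and $\tfrac{R}{p-1}$. The verification that the reciprocals sum to $1$ reduces to checking $\tfrac{1}{\rho}+\tfrac{p}{r}=1$, which holds in both regimes (with $\rho=\infty$, $r=p$ for $p>4$; and $\rho=\tfrac{4}{6-p}$, $r=\tfrac{4p}{p-2}$ for $2<p\leq 4$, in which case the sum telescopes to $\tfrac{6-p}{4}+\tfrac{p-2}{4}=1$). Next, I would apply temporal H\"older with exponents $\tfrac{4p}{\theta p+2}$ and $\tfrac{Q}{p-1}$, whose reciprocals sum to $\tfrac{\theta p+2}{4p}+\tfrac{(3-\theta)p-2}{4p}=\tfrac{3}{4}$ after substituting the formula for $Q$. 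The second inequality is obtained by a further application of H\"older to $fg$ using exponents $(\tfrac{4}{\theta},\tfrac{2}{1-\theta})$ for $f$ and $(2p,r)$ for $g$, with reciprocals matching by inspection in both time and space.

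To finish, I would verify the claims about $(4/\theta,2/(1-\theta))$ and $(Q,R)$. Admissibility of the former in $d=1$ reduces to $\tfrac{\theta}{2}+\tfrac{1-\theta}{2}=\tfrac{1}{2}$. The scaling identity $\tfrac{2}{Q}+\tfrac{1}{R}=\tfrac{1}{2}-s(p)$ reduces, after a short simplification using the formulas for $Q$ and $R$, to $\tfrac{1}{p}+\tfrac{1}{r}=\tfrac{1}{2}-s(p)$, which matches the definitions of $s(p)$ and $r(p)$ in both regimes. The upper bound $Q<2p$ is equivalent to $\theta<1$. The lower bound $Q>Q(p)$ is precisely what forces the case-split assumption on $\theta$: a direct manipulation shows $Q>p$ iff $\theta>\tfrac{2}{p}-1$ (relevant for $p>4$), and $Q>\tfrac{8p}{p+2}$ iff $\theta>\tfrac{5p-2-p^2}{2p}$ (relevant for $2<p\leq4$).

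I do not expect any real obstacle; the proof is mechanical. The one thing worth checking en route is that $R>0$, i.e.\ $2(p-1)-r(1-\theta)>0$, but this is equivalent to the same lower bound on $\theta$ already imposed to ensure $Q>Q(p)$, so it comes for free. The conceptual content of the lemma is simply that, for any $p>2$, the assumption $\theta<1$ leaves genuine room above the critical thresholds in the hypothesis, so that $Q$ can be chosen strictly between $Q(p)$ and $2p$ while keeping $(4/\theta,2/(1-\theta))$ admissible.
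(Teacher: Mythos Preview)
Your proposal is correct and follows the same approach as the paper, which simply states that the estimate ``follows immediately from H\"older's inequality'' without writing out the exponent bookkeeping. Your verification of the reciprocals, the admissibility of $(4/\theta,2/(1-\theta))$, the scaling identity for $(Q,R)$, and the equivalence between $R>0$ and the lower bound on $\theta$ are all accurate and fill in exactly the arithmetic the paper omits.
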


We now state the orthogonality conditions we will need.

\begin{lemma}[Orthogonality]\label{L:orthogonality} For $j\neq k$, $\theta$ as above, $s\in\{0,1\}$, and for admissible pairs $(\alpha,\beta)\not\in\{(\infty,2),(4,\infty)\}$ we have
\[
\lim_{n\to\infty} \|(\nabla)^s v_n^j\, v_n^k\|_{L_t^{\frac{4p}{p\theta+2}}L_x^{\frac{2r}{r(1-\theta)+2}}}+\|v_n^jv_n^k\|_{L_t^pL_x^\frac{r}{2}}+\|(\nabla)^sv_n^jv_n^k\|_{L_t^\frac{\alpha}{2}L_x^\frac{\beta}{2}}=0.
\]
\end{lemma}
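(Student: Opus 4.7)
The plan is to reduce each of the three orthogonality statements to the elementary observation that two translates of compactly supported functions in spacetime have disjoint supports once the translation parameters diverge. The mechanism is: (a) Hölder splits each product norm into two single-factor norms; (b) Proposition~\ref{P:SFAP} replaces each $v_n^j$ (and $\nabla v_n^j$) by a translate of a $C_c^\infty$ function up to an error of size $\eps$ in the relevant single-factor norms; and (c) the support-disjointness of the approximants handles the main term exactly.

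Concretely, for the first norm I would use Hölder as in Lemma~\ref{L:InterpEstV2} to bound
\[
\|(\nabla)^s v_n^j \cdot v_n^k\|_{L_t^{4p/(p\theta+2)}L_x^{2r/(r(1-\theta)+2)}} \lesssim \|(\nabla)^s v_n^j\|_{L_t^{4/\theta}L_x^{2/(1-\theta)}} \|v_n^k\|_{L_t^{2p}L_x^r};
\]
the pair $(4/\theta, 2/(1-\theta))$ is $1d$-admissible (since $\theta/2 + (1-\theta)/2 = 1/2$) and, for $\theta \in (0,1)$, lies strictly away from the excluded endpoints $(\infty,2)$ and $(4,\infty)$. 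The second orthogonality norm splits via Hölder as $\|v_n^j\|_{L_t^{2p}L_x^r}\|v_n^k\|_{L_t^{2p}L_x^r}$, and the third as $\|(\nabla)^s v_n^j\|_{L_t^\alpha L_x^\beta}\|v_n^k\|_{L_t^\alpha L_x^\beta}$. In every case, each single-factor norm falls into the class for which Proposition~\ref{P:SFAP} furnishes a $C_c^\infty$-approximation: the $L_t^{2p}L_x^r$-norm is the scattering norm treated in the first part of that proposition, while the others are admissible non-endpoint pairs treated in the second part.

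Given $\eps > 0$, let $\psi^j \in C_c^\infty(\R\times\R)$ realize these approximations simultaneously and set $\psi^j_n(t,x) = \psi^j(t+t_n^j, x-x_n^j)$. By the triangle inequality together with Hölder and the uniform bounds of Proposition~\ref{P:SFAP} on the $v_n^j$, any of the product norms $\|(\nabla)^{s_1} v_n^j \cdot (\nabla)^{s_2} v_n^k\|_X$ appearing in the lemma satisfies
\[
\|(\nabla)^{s_1} v_n^j \cdot (\nabla)^{s_2} v_n^k\|_X \leq \|(\nabla)^{s_1} \psi^j_n \cdot (\nabla)^{s_2}\psi^k_n\|_X + C\eps
\]
for all $n$ sufficiently large, with $C$ depending only on the profile data. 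Now the support of $(\nabla)^{s_1}\psi^j_n$ is contained in the translate of the fixed compact set $K^j := \text{supp}(\psi^j)$ by $(-t_n^j, x_n^j)$. Since $|t_n^j - t_n^k| + |x_n^j - x_n^k| \to \infty$, after passing to a subsequence at least one of $|t_n^j - t_n^k|$ or $|x_n^j - x_n^k|$ diverges; either way the supports of $\psi^j_n$ and $\psi^k_n$ separate for all sufficiently large $n$, so $\psi^j_n \cdot \psi^k_n \equiv 0$ and the remaining main term vanishes identically. Sending $\eps \to 0$ proves the lemma.

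The only point requiring any care---hardly an obstacle---is verifying that each single-factor norm produced by Hölder is covered by the $C_c^\infty$-approximation statement in Proposition~\ref{P:SFAP}; the hypothesis $(\alpha,\beta) \notin \{(\infty,2),(4,\infty)\}$ in the lemma and the restrictions on $\theta$ in Lemma~\ref{L:InterpEstV2} are precisely what guarantee this. The entire argument avoids the endpoint Strichartz pairs, which is exactly why the requirement $p>2$ (via the strict positivity of the admissible range for $\theta$) is used here.
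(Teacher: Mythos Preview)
Your argument is essentially the same as the paper's: approximate each $v_n^j$ in the relevant single-factor norm by a spacetime translate of a fixed $C_c^\infty$ function, use H\"older and the uniform bounds to reduce to the product of the approximants, and then invoke the divergence $|t_n^j-t_n^k|+|x_n^j-x_n^k|\to\infty$ to force disjoint supports. The only small slip is that Proposition~\ref{P:SFAP} applies only to profiles with $|x_n^j|\to\infty$; when $x_n^j\equiv 0$ one has $v_n^j(t,x)=v^j(t+t_n^j,x)$ for a fixed scattering solution $v^j$, and the $C_c^\infty$-approximation follows directly by density in the (non-endpoint) Strichartz norms---the paper notes this case separately, and you should too.
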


\begin{proof}[Proof of Lemma~\ref{L:orthogonality}] We will only show the case of the $L_t^\frac{4p}{p\theta+2}L_x^\frac{2r}{r(1-\theta)+2}$-norm with $s=1$, as the others are similar. We use the notation $(T_n^jf)(t,x)=f(t+t_n^j,x-x_n^j)$. 

First, let $\eps>0$ choose $\psi^j$ and $\psi^k$ so that for all $n$ sufficiently large,
\[
\|\nabla(v_n^j-T_n^j\psi^j)\|_{L_t^\frac{4}{\theta}L_x^\frac{2}{1-\theta}}+\|v_n^k-T_n^k\psi^k\|_{L_t^{2p}L_x^r}<\varepsilon.
\]
If $x_n^j\equiv0$ this can be done trivially as $v_n^j$ is obtained from $v^j$ by applying translations, and likewise for $x_n^k$. Otherwise, if $|x_n^j|\to\infty$ this can be done by appealing to Proposition~\ref{P:SFAP}.  Using the triangle inequality, H\"older's inequality, the bounds on $v_n^j$ and $v_n^k$, and the asymptotic orthogonality of the parameters, one can now readily deduce
\[
\|\nabla v_n^j v_n^k\|_{L_t^\frac{4p}{p\theta+2}L_x^\frac{2r}{r(1-\theta)+2}}\lesssim_{E_c} \eps + o_n(1)\qtq{as}n\to\infty,
\]
which yields the result. \end{proof}


We now turn to the proof of \eqref{unJ-good-bounds}. We will show the $L_t^{2p}L_x^r$-bound, as the others follow similarly. Let $0<\eta\ll1$. By $H^1$ decoupling, we may find some $J_0=J_0(\eta)$ so that
\begin{equation}\label{E:SmallDataEta}
\sum_{j>J_0}\|\varphi^j\|_{H^1}^2<\eta.
\end{equation}
Thus, by the small data theory, we have that 
\[
\sum_{j>J_0}\|v_n^j\|_{L_t^{2p}L_x^r}^2\lesssim \eta.
\]

By the triangle inequality, \eqref{unJ-good-bounds} will follow if we can show
\[
\limsup_{J\to J^*}\limsup_{n\to\infty}\,\biggl\|\sum_{j=1}^Jv_n^j\biggr\|_{L_t^{2p} L_x^r}\lesssim 1.
\]
Indeed, for any finite $J>J_0$ we can estimate using the triangle inequality and Lemma~\ref{L:orthogonality} to obtain
\begin{align*}
\biggl\|\sum_{j=1}^Jv_n^j\biggr\|_{L_t^{2p}L_x^r}^2&\lesssim \sum_{j=1}^{J_0}\|v_n^j\|_{L_t^{2p}L_x^r}^2+\sum_{j>J_0}\|v_n^j\|_{L_t^{2p}L_x^r}^2+\sum_{j\not=k}\|v_n^jv_n^k\|_{L_t^pL_x^\frac{r}{2}}\\
&\lesssim \sum_{j=1}^{J_0}\|v_n^j\|_{L_t^{2p}L_x^r}^2+\eta+o_n(1),
\end{align*}
so that \eqref{unJ-good-bounds} follows.

We now turn to the proof of \eqref{unJ-error-bounds}.  We denote
\[
F(x,z) = a(x)f(z) = a(x) |z|^p z. 
\]
We then have
\begin{align*}
(i\partial_t+ \Delta) u_n^J - F(x,u_n^J) & = \sum_{j=1}^J F(x,v_n^j) - F\bigl(x,\sum_{j=1}^J v_n^j\bigr) \\
& \quad + F(x,u_n^J - e^{it\Delta} w_n^J) - F(x,u_n^J). 
\end{align*}
We will show that for $s\in\{0,1\}$,
\begin{equation}\label{Jerror1}
\limsup_{J\to J^*}\limsup_{n\to\infty} \biggl\|(\nabla)^s\biggl[\sum_{j=1}^J F(x,v_n^j) - F\bigl(x,\sum_{j=1}^J v_n^j\bigr)\biggr]\biggr\|_{L_t^{\frac43}L_x^1(\R\times\R)}=0
\end{equation}
and
\begin{equation}\label{Jerror2}
\limsup_{J\to J^*}\limsup_{n\to\infty} \| (\nabla)^s[F(x,u_n^J-e^{it\Delta}w_n^J) - F(x,u_n^J)\|_{L_t^{\frac43}L_x^1(\R\times\R)}=0.
\end{equation}

\begin{proof}[Proof of \eqref{Jerror1}] The estimate given for $s=1$ will also show how to treat the simpler case $s=0$, so without loss of generality we fix $s=1$.  We begin by applying the product rule to estimate
\begin{align}
\biggl|\nabla\biggl[\sum_{j=1}^J F(x,v_n^j) - F(x,\sum_{j=1}^J v_n^j)\biggr]\biggr| &\lesssim |\nabla a|\,\biggl|\sum_{j=1}^J f(v_n^j) - f\biggl(\sum_{j=1}^J v_n^j\biggr)\biggr| \label{Jerror11} \\
& \quad + |a|\,\biggl| \sum_{j=1}^J \nabla [f(v_n^j)] - \nabla \biggl[f\biggl(\sum_{J=1}^J v_n^j\biggr)\biggr]\biggr|. \label{Jerror12}
\end{align}

The term \eqref{Jerror12} is more involved (and will demonstrate how to handle \eqref{Jerror11} as well), so we focus on this term first.  We begin by applying the pointwise estimate Lemma~\ref{L:PW2} to write
\[
|\eqref{Jerror12}| \lesssim_J \sum_{j\neq k} |\nabla v_n^j| |v_n^k|^p + \sum_{j\neq k} |\nabla v_n^j| |v_n^j|^{p-1} |v_n^k|.
\]
It therefore suffices to prove that for any $j\neq k$ and any $\ell$, we have
\[
\lim_{n\to\infty} \| a\,|\nabla v_n^j| |v_n^k| |v_n^\ell|^{p-1}\|_{L_t^{\frac43} L_x^1} = 0. 
\]
To this end, we choose the parameter $\theta$ as in Lemma~\ref{L:InterpEstV2} and use Lemma~\ref{L:orthogonality} to obtain
\begin{align*}
\| a\,|\nabla v_n^j| |v_n^k| |v_n^\ell|^{p-1}\|_{L_t^{\frac43} L_x^1} &\lesssim\|a\|_{L_x^\rho}\|\nabla v_n^jv_n^k\|_{L_t^\frac{4p}{p\theta+2}L_x^\frac{2r}{r(1-\theta)+2}}\|v_n^\ell\|^{p-1}_{L_t^Q L_x^R}\\
&\to 0\qtq{as}n\to\infty.
\end{align*}
For \eqref{Jerror11}, we rely directly on pointwise estimates reduce matters to proving
\[
\|\nabla a\ v_n^j |v_n^k|^p\|_{L_t^{\frac43} L_x^1} \to 0\qtq{as}n\to\infty
\]
for any $j\neq k$. In this case we estimate this term by
\[
\|\nabla a\|_{L_x^\rho}\|v_n^jv_n^k\|_{L_t^\frac{4p}{p\theta+2}L_x^\frac{2r}{r(1-\theta)+2}}\|v_n^k\|_{L_t^Q L_x^R}^{p-1}\to 0\qtq{as}n\to\infty,
\]
where we have again used Lemma~\ref{L:orthogonality}. \end{proof}

\begin{proof}[Proof of \eqref{Jerror2}] The estimate given for $s=1$ will also show how to treat the simpler case $s=0$, so without loss of generality we fix $s=1$. We begin by applying the product rule and the pointwise estimate Lemma~\ref{L:PW} to obtain
\begin{align}
\|&\nabla  [ F(x,u_n^J-e^{it\Delta}w_n^J)-F(x,u_n^J)]\|_{L_t^{\frac43} L_x^1} \nonumber \\
& \lesssim \|\nabla a\,[f(u_n^J-e^{it\Delta}w_n^J)-f(u_n^J)]\|_{L_t^{\frac43}L_x^1}  + \| a\, \nabla[f(u_n^J - e^{it\Delta}w_n^J)-f(u_n^J)]\, \|_{L_t^{\frac43} L_x^1}\nonumber  \\
& \lesssim \|\nabla a[f(u_n^J-e^{it\Delta}w_n^J)-f(u_n^J)]\|_{L_t^{\frac43}L_x^1} \label{Jerror21}\\
& \quad + \| a\, |\nabla e^{it\Delta}w_n^J|\, | u_n^J|^p \|_{L_t^{\frac43} L_x^1} \label{Jerror22}\\
& \quad + \| a\, |\nabla e^{it\Delta} w_n^J|\, |e^{it\Delta} w_n^J|^p \|_{L_t^{\frac43} L_x^1} \label{Jerror23}\\
& \quad + \| a\, |\nabla u_n^J |\, |e^{it\Delta} w_n^J|^p \|_{L_t^{\frac43} L_x^1} \label{Jerror24}\\
& \quad + \| a\, |\nabla u_n^J|\, |e^{it\Delta} w_n^J|\, |u_n^J|^{p-1} \|_{L_t^\frac43 L_x^1}. \label{Jerror25}
\end{align}

Let first handle the most difficult term, namely \eqref{Jerror22}. The difficulty stems from the fact that $e^{it\Delta} w_n^J$ is only assumed to vanish in a space without derivatives.  The remedy will be to apply the local smoothing estimate (Proposition~\ref{P:LS}).

To get started with this term, we first estimate
\[
\eqref{Jerror22}\lesssim \Biggl\|a\,|\nabla e^{it\Delta}w_n^J|\,\Biggl|\sum_{j=1}^Jv_n^j\Biggr|^p\Biggr\|_{L_t^\frac43L_x^1}+\|a\,|\nabla e^{it\Delta}w_n^J|\,|e^{it\Delta}w_n^J|^p\|_{L_t^\frac43L_x^1}.
\]
The latter summand above can be combined with \eqref{Jerror23}, so we are left with estimating the first. Let $\eta>0$ be given and choose $J_0$ as in \eqref{E:SmallDataEta}. We then write
\[
\Biggl|\sum_{j=1}^Jv_n^j\Biggr|^p=\sum_{j=1}^{J_0}|v_n^j|^p+\sum_{j\geq J_0}|v_n^j|^p+\mathcal{O}_J\biggl(\sum_{j\not=k}|v_n^j|^{p-1}|v_n^k|\biggr)
\]
The contribution of the $j\geq J_0$ sum is controlled by
\begin{align*}
\bigl\|a\,|\nabla e^{it\Delta} w_n^J|\sum_{j\geq J_0}|v_n^j|^p\bigr\|_{L_t^\frac{4}{3}L_x^1}&\lesssim \|a\|_{L_x^\rho}\|\nabla e^{it\Delta}w_n^J\|_{L_t^4L_x^\infty}\sum_{j\geq J_0}\|v_n^j\|_{L_t^{2p}L_x^r}\\
&\lesssim \|w_n^J\|_{H^1}\sum_{j\geq J_0}\|\varphi^j\|_{H^1}^p\lesssim \eta,
\end{align*}
for all large $n$, uniformly in $J$. By the standard nonlinear estimate Lemma~\ref{L:StdNonlinEst} and orthogonality (Lemma~\ref{L:orthogonality}) the contribution of the $j\not=k$ sum is controlled by
\begin{align*}
\bigl\|a&\,|\nabla e^{it\Delta}w_n^J|\mathcal{O}\bigl(\sum_{j\not=k}|v_n^j|^{p-1}|v_n^k|\bigr)\|_{L_t^\frac{4}{3}L_x^1}\\
&\lesssim_J \|a\|_{L_x^\rho}\|\nabla e^{it\Delta}w_n^J\|_{L_t^4L_x^\infty}\sum_{j\not=k}\|v_n^jv_n^k\|_{L_t^pL_x^\frac{r}{2}}\|v_n^j\|_{L_t^{2p}L_x^r}^{p-2}\\
&\lesssim_Jo_n(1)\qtq{as}n\to\infty.
\end{align*}
For the remaining term, we apply the triangle inequality and find that it suffices to prove that for fixed $j\leq J_0$, we have
\[
\lim_{J\to\infty}\lim_{n\to\infty}\|a\,|\nabla e^{it\Delta}w_n^J||v_n^j|^p\|_{L_t^\frac{4}{3}L_x^1}=0.
\]
Now, recalling the small parameter $\eta>0$ introduced above, we may find $\psi_\eta\in C_c^\infty(\R\times\R)$ such that
\[
\|v_n^j-\psi_\eta(\cdot+t_n^j,\cdot-x_n^j)\|_{L_t^{2p}L_x^r}<\eta,
\]
as in the proof of Lemma~\ref{L:orthogonality}. Applying H\"older's inequality and the standard nonlinear estimate, we therefore find that (after a change of variables) that it suffices to prove
\[
\lim_{J\to\infty}\lim_{n\to\infty}\|[\tau_n^j a]|\nabla e^{i(t-t_n^j)\Delta}\tau_n^jw_n^J|\,|\psi_\eta|^p\|_{L_t^\frac{4}{3}L_x^1}=0,
\]
where $\tau_n^jg(x):=g(x+x_n^j)$. In fact, writing $K$ for the support of $\psi_\eta$, we have by H\"older's inequality, the local smoothing estimate Proposition~\ref{P:LS}, and a change of variables that
\begin{align*}
[\tau_n^ja]|\nabla e^{i(t-t_n^j)\Delta}\tau_n^jw_n^J|\,|\psi_\eta|^p\|_{L_t^\frac{4}{3}L_x^1}&\lesssim_K \|a\|_{L^\infty}\|\nabla e^{i(t-t_n^j)\Delta}\tau_n^jw_n^J\|_{L_{t,x}^2(K)}\\
&\lesssim_K\|e^{it\Delta}w_n^J\|_{L_t^{2p}L_x^r}^\frac{1}{3}\|\nabla w_n^J\|_{L_t^2}^\frac{2}{3}\\
&=o(1)\qtq{as}n,J\to\infty.
\end{align*}
This completes the treatment of \eqref{Jerror22}.

The estimates of \eqref{Jerror21} and \eqref{Jerror23} are simpler:

For \eqref{Jerror21}, we use \eqref{E:BPW1} and are left to estimate
\begin{align}
\|\nabla a[f(u_n^J-e^{it\Delta}w_n^J)-f(u_n^J)]\|_{L_t^{\frac43}L_x^1} & \lesssim \|\nabla a\|_{L^\rho} \|e^{it\Delta} w_n^J \|_{L_t^{2p} L_x^{r}}^p \|e^{it\Delta}w_n^J\|_{L_t^4 L_x^\infty} \label{Jerror211}\\
& \quad + \|\nabla a\ |u_n^J|^p\ e^{it\Delta} w_n^J\|_{L_t^{\frac43} L_x^1}. \label{Jerror212}
\end{align}
The error term \eqref{Jerror211} vanishes as $n,J\to\infty$. To treat \eqref{Jerror212} is not difficult, but it requires interpolation as above, as we need to avoid the endpoint Strichartz norms for $u_n^J$. Applying Lemma~\ref{L:InterpEstV2} we observe
\begin{align*}
\eqref{Jerror212}\lesssim \|\nabla a\|_{L^\rho}\|u_n^J\|_{L_t^\frac{4}{\theta}L_x^\frac{2}{1-\theta}}\|e^{it\Delta}w_n^J\|_{L_t^{2p}L_x^r}\|u_n^J\|_{L_t^Q L_x^R}^{p-1},
\end{align*}
which tends to zero as $n, J\to\infty$ (cf. Lemma~\ref{L:InterpEstV2} above).

We turn to \eqref{Jerror23}, by the usual nonlinear estimate Lemma~\ref{L:StdNonlinEst}, 
\begin{align*}
\| a\, |\nabla e^{it\Delta} w_n^J|\, |e^{it\Delta} w_n^J|^p \|_{L_t^{\frac43} L_x^1} & \lesssim \|a\|_{L^\rho} \| \nabla e^{it\Delta} w_n^J\|_{L_t^4 L_x^\infty} \|e^{it\Delta} w_n^J\|_{L_t^{2p} L_x^r}^p \\
& \lesssim \|w_n^J\|_{H^1} \|e^{it\Delta}w_n^J\|_{L_t^{2p} L_x^r}^p \\
& = o(1) \qtq{as} n,J\to\infty. 
\end{align*}

For \eqref{Jerror24}, we must again interpolate and so we apply Lemma~\ref{L:InterpEstV2}:
\begin{align*}
\|a&|\nabla u_n^J||e^{it\Delta}w_n^J|^p\|_{L_t^\frac{4}{3}L_x^1}\\
&\lesssim \|a\|_{L_x^\rho}\|\nabla u_n^J\|_{L_t^\frac{4}{\theta}L_x^\frac{2}{1-\theta}}\|e^{it\Delta}w_n^J\|_{L_t^{2p}L_x^r}\|e^{it\Delta}w_n^J\|_{L_t^Q L_x^R}^{p-1} \to 0 \qtq{as}n,J\to\infty.
\end{align*}

Finally, we turn to \eqref{Jerror25}, which is treated similarly:
\begin{align*}
\|a&\,|\nabla u_n^J||e^{it\Delta}w_n^J||u_n^J|^{p-1}\|_{L_t^\frac{4}{3}L_x^1}\\
&\lesssim\|a\|_{L_x^\rho}\|\nabla u_n^J\|_{L_t^\frac{4}{\theta}L_x^\frac{2}{1-\theta}}\|e^{it\Delta}w_{n}^J\|_{L_t^{2p}L_x^r}\|u_n^J\|_{L_t^Q L_x^R}^{p-1} \to 0 \qtq{as}n,J\to\infty.
\end{align*}
This completes the proof of \ref{Jerror2}.  \end{proof}

With \eqref{PS-data}--\eqref{unJ-error-bounds} established we complete the proof of \eqref{one-profile}.  In particular, $J^*=1$ and we have the decomposition
\[
u_n(0,x)=e^{it_n^1\Delta}\varphi^1(x-x_n^1)+w_n^1(x)
\]
with $w_n^1\to 0$ in $H^1$. To finish the proof we would like to show that $(t_n^1,x_n^1)\equiv0$.

If $|x_n^1|\to\infty$, then an application of Proposition~\ref{P:SFAP} and Proposition~\ref{P:stability} implies spacetime bounds for $u_n$, leading to a contradiction.  

If $t_n^1\to\infty$ we set
\[
\tilde{u}_n(t)=e^{it\Delta}u_n(0)=e^{i(t+t_n^1)\Delta}\varphi^1_n+e^{it\Delta}w_n^1,
\]
which satisfies $\tilde u_n(0)=u_n(0)$, satisfies global space-time bounds, and satisfies
\begin{align*}
\|(\nabla)^s&[(i\partial_t+\Delta)\tilde{u}_n-a|\tilde{u}_n|^p\tilde{u}_n]\|_{L_t^\frac{4}{3}L_x^1([0,\infty)\times\R)}\to 0
\qtq{as}n\to\infty
\end{align*}
for $s\in\{0,1\}$. Thus by stability, we can deduce spacetime bounds for $u_n$ on $[0,\infty)$, a contradiction.  Similarly, if $t_n^1\to-\infty$, we can deduce bounds on $(-\infty,0]$.  Thus we obtain $(t_n^1,x_n^1)\equiv (0,0)$ and thereby complete the proof of Proposition~\ref{P:PS}. \end{proof}

\section{Preclusion of compact solutions}\label{S:Preclusion}

In this section we derive a Morawetz estimate in the spirit of Nakanishi \cite{N} and use it to preclude the possibility of compact solutions as in Corollary~\ref{C:AP}, thus completing the proof of Theorem~\ref{T}.

We introduce the notation
\[
\|f\|_{Z(t)} = \| c(t,x)(x+2it\nabla)f\|_{L_x^2},
\]
for $f\in H^1$, where
\[
c(t,x) = \biggl[\frac{t}{\langle t\rangle^3+|x|^3}\biggr]^{\frac12}.
\]
We note that $\|\cdot\|_{Z(t)}$ defines a norm for $t\geq 1$. To see that $\|f\|_{Z(t)}=0$ implies $f\equiv 0$, for example, we may use the identity 
\[
(x+2it\nabla)f = \exp\bigl\{\tfrac{i|x|^2}{4t}\bigr\} 2it\nabla\bigl[\exp\bigl\{-\tfrac{i|x|^2}{4t}\bigr\} f\bigr].
\]
We also observe that
\[
\|u(t)\|_{Z(t)} \lesssim \|u(t)\|_{H^1}
\]
uniformly in $t\geq 1$.  Indeed, this follows from the fact that
\[
\| x c(t,x)\|_{L_{t,x}^\infty} + \| t c(t,x)\|_{L_{t,x}^\infty} \lesssim 1.
\]

We turn to the proof of the Morawetz estimate, working in general dimension $d$. 
\begin{proposition}[Morawetz estimate]\label{P:Morawetz} Suppose $a:\R^d\to\R$ is nonnegative and repulsive.  Let $u:\R^d\times\R\to\C$ be an $H^1$ solution to \eqref{nls}.  Then
\[
\int_1^\infty\int_{\R^d}\frac{|(x+2it\nabla)u|^2}{\langle t\rangle^3+|x|^3}+\frac{\langle t\rangle^2a(x)|u(t,x)|^{p+2}}{\langle t\rangle^3+|x|^3}\,dx\,dt\lesssim_{M(u),E(u)} 1.
\]
In particular,
\[
\int_1^\infty \|u(t)\|_{Z(t)}^2\, \tfrac{dt}{t} \lesssim_{M(u),E(u)} 1.
\]
\end{proposition}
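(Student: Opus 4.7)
I would prove the first displayed estimate; the second follows immediately because $c(t,x)^2 = t/(\langle t\rangle^3 + |x|^3)$, so $\|u(t)\|_{Z(t)}^2 \tfrac{dt}{t}$ reproduces the first half of the integrand on the left. The device for handling the Galilean/pseudoconformal generator $J = x + 2it\nabla$ is the change of variable $v(t,x) := e^{-i|x|^2/(4t)}u(t,x)$. A direct calculation gives $Ju = 2it\,e^{i|x|^2/(4t)}\nabla v$, so $|Ju|^2 = 4t^2|\nabla v|^2$ and $|u| = |v|$, while $v$ solves
\[
i\partial_t v + \Delta v + \tfrac{ix}{t}\cdot\nabla v + \tfrac{id}{2t}v = a(x)|v|^p v.
\]
Since $\langle t\rangle\sim t$ on $[1,\infty)$, the claimed estimate is equivalent to the weighted spacetime bound
\[
\int_1^\infty\!\!\int_{\R^d}\frac{t^2\bigl(|\nabla v|^2 + a(x)|v|^{p+2}\bigr)}{\langle t\rangle^3+|x|^3}\,dx\,dt\lesssim_{M(u),E(u)}1.
\]

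\textbf{Morawetz computation.} I would then run a standard Morawetz calculation for $v$ with a smooth, time-dependent, radial weight $\Phi(t,x)$ designed so that
\[
\nabla_x^2\Phi\gtrsim \tfrac{t^2}{\langle t\rangle^3+|x|^3}\,I,\qquad \Delta_x\Phi\gtrsim \tfrac{t^2}{\langle t\rangle^3+|x|^3},
\]
with $\nabla_x\Phi$ pointing radially outward and $\Phi$, $|\nabla_x\Phi|$, $|\partial_t\Phi|$ uniformly bounded; a model candidate is $\Phi(t,x)=\psi(|x|/\langle t\rangle)$ with $\psi$ bounded and monotone. Define the Morawetz quantity
\[
V(t):=2\,\Im\int \nabla_x\Phi\cdot\bar v\,\nabla v\,dx - \int \partial_t\Phi\,|v|^2\,dx,
\]
which satisfies $|V(t)|\lesssim_{M(u),E(u)} 1$ uniformly in $t\ge 1$ by the a priori $H^1$ bound. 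Differentiating $V(t)$ and integrating by parts via the equation for $v$ yields schematically
\[
\dot V(t) \gtrsim \int\tfrac{t^2|\nabla v|^2}{\langle t\rangle^3+|x|^3}\,dx + \int\tfrac{t^2 a(x)|v|^{p+2}}{\langle t\rangle^3+|x|^3}\,dx - \tfrac{2}{p+2}\int \nabla_x\Phi\cdot\nabla a\,|v|^{p+2}\,dx + R(t),
\]
where the repulsivity $x\cdot\nabla a\le 0$ combined with $\nabla_x\Phi$ being radially outward makes the $\nabla_x\Phi\cdot\nabla a$ integral non-positive (so the $-\tfrac{2}{p+2}$ in front yields an additional gain), and $R(t)$ collects lower-order errors from the transport $\tfrac{ix}{t}\cdot\nabla v$, the damping $\tfrac{id}{2t}v$, and the fourth-order dispersion $\tfrac12\int\Delta_x^2\Phi\,|v|^2$. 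Integrating in $t\in[1,T]$, using the uniform bound on $V$, and sending $T\to\infty$ then gives the displayed estimate.

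\textbf{Main obstacle.} The main difficulty is constructing a single weight $\Phi$ whose Hessian and Laplacian both carry the singular factor $t^2/(\langle t\rangle^3+|x|^3)$ while $\Phi$, $\nabla_x\Phi$, $\partial_t\Phi$ stay uniformly bounded and $\nabla_x\Phi$ stays radial. This is the Nakanishi-style ingredient specific to low dimensions: in $d\ge 3$ one may simply take $\Phi(x)=|x|$, but in $d=1$ time dependence is essential for coercivity. A secondary challenge is absorbing the error terms in $R(t)$—particularly the transport-induced contributions and the $\Delta_x^2\Phi$ piece—into the good terms using only mass and energy control, which in practice pins down the admissible symbol bounds on $\Phi$ and $\psi$. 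The nonnegativity and repulsivity of $a$ are crucial so that the inhomogeneous nonlinearity contributes with a favorable sign throughout.
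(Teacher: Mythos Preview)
Your approach diverges from the paper's and, as written, has a genuine gap in the choice of weight.

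\textbf{What the paper does.} The paper runs the Morawetz computation directly on $u$ (no substitution to $v$) with the Nakanishi weight $b(t,x)=\sqrt{t^2+|x|^2}$. The Morawetz quantity is $M_b^0(t)=2\Im\int\nabla b\cdot\bar u\nabla u+\int b_t|u|^2$, which is uniformly bounded by $\|u\|_{L_t^\infty H_x^1}^2$ since $|\nabla b|,|b_t|\le 1$. The crucial point is an algebraic identity: when one differentiates $M_b^0$, the terms $4\Re(\bar u_j u_k b_{jk})+4\Im((b_t)_j\bar u u_j)+|u|^2 b_{tt}$ combine exactly into $|(x+2it\nabla)u|^2/(t^2+|x|^2)^{3/2}$ plus a nonnegative angular piece. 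No change of variables is needed; the $Ju$ structure emerges from the specific choice of $b$.

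\textbf{Where your proposal breaks.} Your candidate weight $\Phi(t,x)=\psi(|x|/\langle t\rangle)$ cannot deliver the Hessian lower bound you claim. In one dimension (writing $s=|x|/t$, $t\sim\langle t\rangle$) one has $\Phi_{xx}=t^{-2}\psi''(s)$, whereas the target is $t^2/(\langle t\rangle^3+|x|^3)\sim t^{-1}(1+s^3)^{-1}$. Matching these forces $\psi''(s)\gtrsim t/(1+s^3)$, which is impossible for a $t$-independent $\psi$. The weight that does give the correct Hessian scaling is $\Phi=t\psi(|x|/t)$ with $\psi''(s)\gtrsim(1+s^3)^{-1}$, i.e.\ essentially $\Phi=\sqrt{t^2+|x|^2}=b$; but then $\Phi$ is not bounded, contrary to your stated constraints.

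Even if you switch to $\Phi=b$, the substitution to $v$ creates a new obstruction at the boundary term. Rewriting $V(t)=2\Im\int\nabla\Phi\cdot\bar v\nabla v-\int\Phi_t|v|^2$ back in terms of $u$ yields an extra contribution $-t^{-1}\int x\cdot\nabla\Phi\,|u|^2$, and for $\Phi=b$ one has $x\cdot\nabla b=|x|^2/\sqrt{t^2+|x|^2}\sim|x|$ for $|x|\gg t$. This term is not controlled by $M(u)+E(u)$ without weighted assumptions on $u$, so your claim ``$|V(t)|\lesssim_{M,E}1$ by the a priori $H^1$ bound'' fails. Relatedly, the transport term $\tfrac{ix}{t}\cdot\nabla v$ you relegate to $R(t)$ generates contributions of the same unbounded type. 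The paper's direct computation on $u$ sidesteps all of this: the boundary quantity involves only $\nabla b$ and $b_t$ against $|u|^2$ and $\bar u\nabla u$, all of which are $H^1$-controlled.
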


\begin{proof} Given a function $b=b(t,x)$ and a solution $u$ to \eqref{nls}, let us define the \emph{Morawetz quantity}
\[
M_b^0(t)=\tfrac{d}{dt}\int|u|^2b\,dx=\int 2\nabla b\cdot\text{Im}(\overline{u}\nabla u)+|u|^2b_t\,dx.
\]

We compute the time derivative of $M_b^0$, employing Einstein summation convention in what follows. Using \eqref{nls} and integrating by parts, 
\begin{align*}
\tfrac{d}{dt}M_b^0(t)&=\int 2\tfrac{d}{dt}\text{Im}(b_j\overline{u}u_j)+\tfrac{d}{dt}(|u|^2b_t)\,dx\\
&=2\text{Im}\int b_j[\overline{u}_tu_j+\overline{u}(u_t)_j]+2(b_t)_j\overline{u}u_j\,dx+\int|u|^2b_{tt}\,dx\\
&=2\text{Re}\int b_j[-\overline{u}_{kk}u_j+\overline{u}u_{jkk}-a_j|u|^{p+2}-\tfrac{pa}{p+2}\partial_j(|u|^{p+2})]\,dx\\
&\quad\quad\quad\quad+\int 4\text{Im}\bigl((b_t)_j\overline{u}u_j\bigr)+|u|^2b_{tt}\,dx\\
&=\int 4\text{Re}(\overline{u}_ju_kb_{jk})-|u|^2b_{jjkk}\,dx+\tfrac{2}{p+2}\int b_{jj}a|u|^{p+2}-\tfrac{4}{p+2}b_ja_j|u|^{p+2}\,dx\\
&\quad\quad+\int4\text{Im}\Bigl((b_t)_j\overline{u}u_j\Bigr)+|u|^2b_{tt}\,dx.
\end{align*}

We now specialize to $b(t,x)=\sqrt{t^2+|x|^2}$. Writing
\[
\slashed\nabla_0 f = \nabla f - \bigl[\tfrac{x}{|x|}\cdot\nabla f\bigr]\tfrac{x}{|x|},
\]
explicit computation leads to 
\begin{align*}
\tfrac{d}{dt}M_b^0(t)&=\int\frac{|(x+2it\nabla)u|^2}{(t^2+|x|^2)^{3/2}}+\frac{4|x|^2|\slashed\nabla_0u|^2}{(t^2+|x|^2)^{3/2}}\,dx+\int\frac{-\Delta\Delta b\,|u|^2}{(t^2+|x|^2)^{3/2}}\,dx\\
&\quad+\frac{2}{p+2}\int\frac{\bigl[(d-1)|x|^2+dt^2\bigr]a(x)|u|^{p+2}}{(t^2+|x|^2)^{3/2}}\,dx-\frac{4}{p+2}\int\frac{x\cdot\nabla a(x)|u|^{p+2}}{(t^2+|x|^2)^{3/2}}\,dx,
\end{align*}
where 
\[
|\Delta\Delta b|=\biggl|\frac{(d-1)(d-3)}{(t^2+|x|^2)^{3/2}}+\frac{6(d-3)t^2}{t^2+|x|^2)^{5/2}}+\frac{15t^4}{(t^2+|x|^2)^{7/2}}\biggl| \lesssim_d t^{-3}.
\]

We now use the Fundamental Theorem of Calculus to obtain
\begin{equation}\label{MorEqn}
\int_1^\infty\tfrac{d}{dt}M_b^0(t)\,dt \lesssim\sup_{t\in [1,\infty)}|M_b^0(t)|.
\end{equation}

By Hölder's inequality, we bound the right-hand side of \eqref{MorEqn} by
\begin{align*}
\sup |M_b^0(t)|& \lesssim \|\nabla b\|_{L_{t,x}^\infty}\|u\|_{L_t^\infty L_x^2}\|\nabla u\|_{L_t^\infty L_x^2}+\|b_t\|_{L_{t,x}^\infty}\|u\|_{L_t^\infty L_x^2}^2 \\
& \lesssim\|u\|_{L_t^\infty H_x^1}^2.
\end{align*}

For the left-hand side of \eqref{MorEqn}, we use the repulsive assumption $x\cdot\nabla a\leq 0$ to obtain the following \emph{lower} bound:
\begin{align*}
\int_1^\infty \tfrac{d}{dt} M_b^0(t)\,dt \geq \int_1^\infty \int_{\R^d} \frac{|(x+2it\nabla)u|^2}{(t^2+|x|^2)^{3/2}} + \frac{t^2 a|u|^{p+2}}{(t^2+|x|^2)^{3/2}} - \frac{|\Delta\Delta b|\,|u|^2}{(t^2+|x|^2)^{3/2}}\,dx\,dt.
\end{align*}
Using the bound on $\Delta\Delta b$, we obtain 
\begin{align*}
\int_1^\infty \int_{\R^d} & \frac{|(x+2it\nabla)u|^2}{(t^2+|x|^2)^{3/2}} + \frac{t^2 a|u|^{p+2}}{(t^2+|x|^2)^{3/2}}\,dx\,dt \\
& \lesssim \|u\|_{L_t^\infty H_x^1}^2 + \|u\|_{L_t^\infty L_x^2}^2\int_1^\infty \int_{\R^d} \frac{dx\,dt}{t^3(t^2+|x|^2)^{3/2}},
\end{align*}
which implies the desired estimate.\end{proof}

The final ingredient we need is the following \emph{lower} bound for compact solutions in the $Z(t)$-norm. 

\begin{proposition}\label{P:LB} If $u$ is a nonzero $H^1$ solution to \eqref{nls} such that $\{u(t):t\in \R\}$ is pre-compact in $H^1$, then
\[
\inf_{t\geq 1}\|u(t)\|_{Z(t)}\gtrsim 1.
\]
\end{proposition}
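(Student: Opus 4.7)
The plan is to argue by contradiction. Suppose there exists a sequence $t_n \geq 1$ with $\|u(t_n)\|_{Z(t_n)} \to 0$. There are two cases depending on whether $\{t_n\}$ is bounded or not.

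If $\{t_n\}$ is bounded, I would pass to a subsequence with $t_n \to t^* \in [1,\infty)$. Since solutions to \eqref{nls} are continuous in $H^1$ and the weight $c(t,x)$ is continuous in $t$ uniformly on $|x| \lesssim \langle t\rangle$, we have $\|u(t^*)\|_{Z(t^*)} = 0$. Using the observation in the excerpt that $\|\cdot\|_{Z(t)}$ is a norm for $t \geq 1$, this forces $u(t^*) \equiv 0$, contradicting mass conservation since $u$ is nonzero.

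The main case is $t_n \to \infty$. By pre-compactness of the orbit, pass to a subsequence to obtain $u(t_n) \to u_\infty$ in $H^1$. Mass conservation gives $\|u_\infty\|_{L^2} = \|u_0\|_{L^2} > 0$, so $u_\infty \neq 0$. The key computational step is to rewrite the weighted norm using the factorization already noted in the excerpt,
\[
(x+2it\nabla)u = e^{i|x|^2/(4t)}\,2it\nabla v_n, \qquad v_n(x) := e^{-i|x|^2/(4t_n)}u(t_n,x),
\]
which gives $\|u(t_n)\|_{Z(t_n)}^2 = \int \frac{4t_n^3|\nabla v_n|^2}{\langle t_n\rangle^3 + |x|^3}\,dx$. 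For any fixed $R>0$ and all large $n$, the weight satisfies $\frac{4t_n^3}{\langle t_n\rangle^3 + |x|^3} \geq 1$ on $\{|x| \leq R\}$, so
\[
\|u(t_n)\|_{Z(t_n)}^2 \geq \int_{|x|\leq R} |\nabla v_n|^2\,dx.
\]

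It remains to pass to the limit on the right-hand side. On $|x| \leq R$, the phase $e^{-i|x|^2/(4t_n)} \to 1$ uniformly as $t_n \to \infty$, and a direct calculation gives $\nabla v_n = e^{-i|x|^2/(4t_n)}(\nabla u(t_n) - \tfrac{ix}{2t_n}u(t_n))$, where the second term is $O(R/t_n)$ in $L^2(|x|\leq R)$. Combined with $u(t_n) \to u_\infty$ in $H^1$, this yields $\nabla v_n \to \nabla u_\infty$ in $L^2(|x|\leq R)$. Therefore
\[
0 = \liminf_{n\to\infty}\|u(t_n)\|_{Z(t_n)}^2 \geq \int_{|x|\leq R}|\nabla u_\infty|^2\,dx.
\]
Since $u_\infty \in L^2(\R)\setminus\{0\}$ cannot be a nonzero constant, $\nabla u_\infty \not\equiv 0$, and for $R$ large enough the right-hand side is strictly positive. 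Contradiction. The only delicate point is keeping track of the two small quantities (phase convergence and $H^1$ convergence of $u(t_n)$) simultaneously on compact sets, which is routine once the factorization is in place; the main conceptual step is recognizing that the weight $c(t,x)^2$ bounds $4t^3/(\langle t\rangle^3+|x|^3)$ from below by a constant on any fixed compact spatial set as $t\to\infty$, which is what converts the degenerate weight into usable local coercivity of $\nabla v_n$.
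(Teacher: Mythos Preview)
Your argument is correct and follows the same overall scheme as the paper: argue by contradiction, use pre-compactness to extract a strong $H^1$ limit $u_\infty$ (nonzero by mass conservation), and then exploit that as $t\to\infty$ the $Z(t)$-norm essentially recovers the $\dot H^1$-norm to force $\nabla u_\infty=0$, a contradiction.

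The paper organizes the $t_n\to\infty$ case more economically. Rather than localizing to $\{|x|\le R\}$ and tracking the convergence $\nabla v_n\to\nabla u_\infty$ there, it first swaps $u(t_n)$ for the fixed limit $\varphi=u_\infty$ inside the $Z(t_n)$-norm via the triangle inequality and the uniform bound $\|\,\cdot\,\|_{Z(t)}\lesssim\|\,\cdot\,\|_{H^1}$:
\[
\|\varphi\|_{Z(t_n)}\lesssim\|u(t_n)\|_{Z(t_n)}+\|u(t_n)-\varphi\|_{H^1}\to 0.
\]
Then a single application of dominated convergence to the \emph{fixed} function $\varphi$ gives $\|\varphi\|_{Z(t_n)}\to 2\|\nabla\varphi\|_{L^2}$. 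This avoids the compact-set bookkeeping and the explicit factorization entirely. Your route, on the other hand, makes the mechanism (the weight $4t^3/(\langle t\rangle^3+|x|^3)$ converging to $4$ pointwise) more transparent, at the cost of a few extra lines.
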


\begin{proof} Suppose $u$ is a nonzero $H^1$ solution to \eqref{nls} such that $\{u(t):t\in \R\}$ is pre-compact in $H^1$.  Suppose towards a contradiction that $\{t_n\}\subset[1,\infty)$ is such that
\[
\|u(t_n)\|_{Z(t_n)}\to 0. 
\]
Passing to a subsequence, we may assume that $u(t_n)\to\varphi$ strongly in $H^1$ for some nonzero $\varphi\in H^1$. Passing to a further subsequence, we may assume that $t_n\to t_0\in[1,\infty)$ or $t_n\to\infty$. If $t_n\to t_0$, then we must have $\|u(t_0)\|_{Z(t_0)}=0$, so that by continuity of the flow, we have $u(t_0)=\varphi=0$, a contradiction.  

Suppose instead that $t_n\to\infty$.  In this case, we first observe that
\begin{align*}
\|\varphi\|_{Z(t_n)} & \lesssim \|u(t_n)\|_{Z(t_n)} + \| u(t_n)-\varphi\|_{Z(t_n)} \\
& \lesssim \|u(t_n)\|_{Z(t_n)} + \|u(t_n)-\varphi\|_{H^1} \to 0 
\end{align*}
as $n\to\infty$. However, by the dominated convergence theorem, since $t_n\to\infty$ we obtain
\[
\|\varphi\|_{Z(t_n)} = \|2\nabla \varphi\|_{L^2}+ o(1)
\]
as $n\to\infty$.  Thus we again obtain the contradiction $\varphi=0$.\end{proof}

Combining Proposition~\ref{P:Morawetz} and Proposition~\ref{P:LB} immediately yields the following.

\begin{corollary}\label{C:contradiction} Let $a$ satisfy the hypotheses of Theorem~\ref{T}.  Then there are no nonzero $H^1$ solutions $u$ to \eqref{nls} such that $\{u(t):t\in\R\}$ is pre-compact in $H^1$.
\end{corollary}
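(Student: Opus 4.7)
The plan is to argue by contradiction: suppose there exists a nonzero $H^1$ solution $u$ to \eqref{nls} whose orbit $\{u(t):t\in\R\}$ is pre-compact in $H^1$. I would then derive a direct contradiction between the lower bound of Proposition~\ref{P:LB} and the integrated upper bound provided by Proposition~\ref{P:Morawetz}.

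First, since $u\in H^1$ solves \eqref{nls} with $a$ satisfying the hypotheses of Theorem~\ref{T}, conservation of mass and energy together with nonnegativity of $a$ ensure that $M(u)+E(u)$ is finite (indeed, pre-compactness in $H^1$ already yields a uniform $H^1$ bound, which together with nonnegativity of $a$ controls the energy). Thus Proposition~\ref{P:Morawetz} applies and yields
\[
\int_1^\infty \|u(t)\|_{Z(t)}^2\,\tfrac{dt}{t} \lesssim_{M(u),E(u)} 1 < \infty.
\]

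Next, since $u$ is nonzero and has pre-compact orbit in $H^1$, Proposition~\ref{P:LB} gives a constant $c_0>0$ such that $\|u(t)\|_{Z(t)}\geq c_0$ for every $t\geq 1$. Plugging this into the integral produces
\[
\int_1^\infty \|u(t)\|_{Z(t)}^2\,\tfrac{dt}{t} \geq c_0^2\int_1^\infty \tfrac{dt}{t} = +\infty,
\]
which is incompatible with the finiteness above. This contradiction shows that no such nonzero compact solution can exist, completing the proof.

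There is no real obstacle here: the content of the corollary is entirely carried by the two previous propositions, so the argument is just the logarithmic divergence of $\int_1^\infty \frac{dt}{t}$ against a finite Morawetz bound. The only minor bookkeeping point is to make sure that pre-compactness in $H^1$ supplies a finite value of $M(u)+E(u)$ so that the implicit constant in Proposition~\ref{P:Morawetz} is meaningful; this is immediate from the $H^1$ bound and nonnegativity of $a$.
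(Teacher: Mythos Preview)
Your argument is correct and is exactly the approach the paper intends: it states that the corollary follows immediately by combining Proposition~\ref{P:Morawetz} with Proposition~\ref{P:LB}, and your write-up simply makes explicit the logarithmic divergence $\int_1^\infty \tfrac{dt}{t}=\infty$ against the finite Morawetz bound.
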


Combining Corollary~\ref{C:AP} and Corollary~\ref{C:contradiction}, we complete the proof of Theorem~\ref{T}.

\appendix

\end{document}